\documentclass{amsart}

\newcommand{\apref}[3]{\hyperref[#2]{#1\ref*{#2}#3}}


\usepackage{enumerate}
\usepackage[latin1]{inputenc}
\usepackage{dsfont}
\usepackage{amssymb,amsthm,amsmath}
\usepackage{mathtools}

\input{xy}
\xyoption{all}

\usepackage{mathrsfs}

\theoremstyle{plain}
\newtheorem{mainthm}{Theorem}

\newtheorem{mainprop}[mainthm]{Proposition}
\newtheorem{property}{Property}
\newtheorem{prop}{Proposition}[section]
\newtheorem{lemma}[prop]{Lemma}

\newtheorem{thm}[prop]{Theorem}

\newtheorem{cor}[prop]{Corollary}

\theoremstyle{definition}

\newtheorem{example}[prop]{Example}

\theoremstyle{remark}
\newtheorem{remark}[prop]{Remark}

\setlength{\parindent}{0pt}
\setlength{\parskip}{1ex}
\newcommand{\structure}{\mc S}
\DeclareMathOperator{\arcosh}{arcosh}
\newcommand{\Leb}{\textnormal{Leb}}
\DeclareMathOperator{\Unit}{U}

\DeclareMathOperator{\jc}{jc}

\newcommand{\hyp}{{\rm h}}
\newcommand{\prim}{{\rm p}}

\newcommand{\TO}{\mc L}

\newcommand{\Dfunc}{\alpha}



\DeclareMathOperator{\End}{End}

\DeclareMathOperator{\Op}{Op}


\DeclareMathOperator{\GL}{GL}

\DeclareMathOperator{\SL}{SL}
\DeclareMathOperator{\PSL}{PSL}
\DeclareMathOperator{\PGL}{PGL}

\DeclareMathOperator{\SO}{SO}
\DeclareMathOperator{\PSO}{PSO}

\DeclareMathOperator{\diag}{diag}


\DeclareMathOperator{\Tr}{Tr}
\DeclareMathOperator{\tr}{tr}

\DeclareMathOperator{\Rea}{Re}




\DeclareMathOperator{\Ind}{Ind}




\DeclareMathOperator{\bd}{bd}

\DeclareMathOperator{\Per}{Per}

\newcommand{\st}{\text{st}}


\newcommand\N{\mathbb{N}}
\newcommand\Q{\mathbb{Q}}
\newcommand\R{\mathbb{R}}
\newcommand\Z{\mathbb{Z}}
\newcommand\C{\mathbb{C}}

\newcommand{\h}{\mathbb{H}}

\newcommand{\mc}[1]{\mathcal #1}

\newcommand{\wt}{\widetilde}
\newcommand{\wh}{\widehat}

\newcommand{\eps}{\varepsilon}






\DeclareMathOperator{\id}{id}

\DeclareMathOperator{\Fct}{Fct}

\newcommand{\mat}[4]{\begin{pmatrix} #1&#2\\#3&#4\end{pmatrix}}
\newcommand{\bmat}[4]{\begin{bmatrix} #1&#2\\#3&#4\end{bmatrix}}
\newcommand{\textmat}[4]{\left(\begin{smallmatrix} #1&#2 \\ #3&#4
\end{smallmatrix}\right)}
\newcommand{\textbmat}[4]{\left[\begin{smallmatrix} #1&#2 \\ #3&#4
\end{smallmatrix}\right]}


\usepackage[colorlinks,breaklinks]{hyperref}

\begin{document}

\title[Selberg zeta functions with non-unitary twists]{Meromorphic continuation 
of Selberg zeta functions with twists having non-expanding cusp monodromy}
\author[K.\@ Fedosova]{Ksenia Fedosova}
\address{KF: Albert-Ludwigs-Universit\"at Freiburg, Mathematisches Institut, Eckerstr. 1, 79104 Freiburg 
im Breisgau, Germany}
\email{ksenia.fedosova@math.uni-freiburg.de}
\author[A.\@ Pohl]{Anke Pohl}
\address{AP: University of Bremen, Department 3 -- Mathematics, Bibliothekstr.\@ 
5,  28359 Bremen, Germany}
\email{apohl@uni-bremen.de}
\subjclass[2010]{Primary: 11M36; Secondary: 37C30, 37D35}
\keywords{Selberg zeta function, meromorphic continuation, non-unitary 
representation, non-expanding cusp monodromy, thermodynamic formalism, transfer 
operator}
\begin{abstract} 
We initiate the study of Selberg zeta functions $Z_{\Gamma,\chi}$ for 
geometrically finite Fuchsian groups $\Gamma$ and finite-dimensional 
representations $\chi$ with non-expanding cusp monodromy. 
We show that for all choices of $(\Gamma,\chi)$, the Selberg zeta function 
$Z_{\Gamma,\chi}$ converges on some half-plane in~$\C$. In addition, under the 
assumption that $\Gamma$ admits a strict transfer operator approach, we show 
that $Z_{\Gamma,\chi}$ extends meromorphically to all of $\C$.
\end{abstract}
\maketitle

\section{Introduction}

Let $\Gamma$ be a geometrically finite Fuchsian group, let 
$\chi\colon\Gamma\to\Unit(V)$ be a unitary representation of $\Gamma$ on a 
finite-dimensional unitary space $V$, and let $\h$ denote the hyperbolic plane. 
For $s\in\C$, $\Rea s \gg 1$, the (twisted) Selberg zeta function 
$Z_{\Gamma,\chi}$ for $(\Gamma,\chi)$ takes the form
\begin{equation}\label{SZF_geom}
 Z_{\Gamma,\chi}(s) = \prod_{\wh\gamma} \prod_{k=0}^\infty \det\left( 1 - 
\chi(g_{\wh\gamma}) e^{-(s+k)\ell(\wh\gamma)} \right),
\end{equation}
where $1$ denotes the identity map on $V$, $\wh\gamma$ in the first product 
ranges over all primitive periodic geodesics of $\Gamma\backslash\h$, the length 
of $\wh\gamma$ is denoted by $\ell(\wh\gamma)$, and $g_{\wh\gamma}$ is a 
primitive hyperbolic element in $\Gamma$ that is associated to $\wh\gamma$. We 
refer to Section~\ref{preliminaries} below for more details and an alternative, 
more algebraic version of \eqref{SZF_geom}.

For various combinations\footnote{To our knowledge, the cases treated in the 
literature are 
\begin{itemize}
\item $\chi$ is the trivial one-dimensional representation and $\Gamma$ is any 
geometrically finite Fuchsian group, 
\item $\Gamma$ is cofinite and $\chi$ is any unitary representation,
\item some combinations of non-cofinite $\Gamma$ (e.\,g.\@ Fuchsian Schottky 
groups) and non-trivial unitary representations $\chi$.
\end{itemize}
Even though some combinations $(\Gamma,\chi)$ with $\chi$ being unitary seem to 
be omitted in the literature, we suppose that these omissions can easily be 
handled with the existing methods.} $(\Gamma,\chi)$ the infinite product 
\eqref{SZF_geom} is known to admit a meromorphic continuation to all of $\C$, see \cite{Selberg, 
Venkov_book, Guillope} and the more extensive references further below.

Selberg zeta functions (i.\,e., the infinite products \eqref{SZF_geom} and their 
meromorphic continuations) have many applications in various areas of 
mathematics, in particular in spectral theory, harmonic analysis, number theory, 
and in mathematical physics. The arguably most important property is that the 
zeros of the Selberg zeta function $Z_{\Gamma,\chi}$ are closely related to the 
eigenvalues and resonances of the Laplace--Beltrami operator $\Delta$ as acting 
on spaces of $(\Gamma,\chi)$-automorphic functions on $\h$ of certain 
regularity. This relation allows for rich results on the (non-)existence and 
distribution of the spectral parameters of $\Delta$, and it establishes a link 
between certain geometric properties (e.\,g., geodesic length spectrum, volume, 
number of cusps; `classical mechanical' aspects) and some spectral properties 
(e.\,g., eigenvalues, scattering resonances; `quantum' aspects) of $\Gamma\backslash\h$.

It is natural to ask to which extent these results can be generalized if in 
\eqref{SZF_geom} the representation $\chi$ is allowed to be non-unitary. First 
investigations in this direction have been conducted by M\"uller \cite{Mueller}, 
Monheim \cite{Monheim}, Deitmar--Monheim \cite{Deitmar_Monheim_traceformula, 
Deitmar_Monheim_eisenstein}, Deitmar \cite{Deitmar_locally_compact}, Spilioti 
\cite{Spilioti2015} and Fedosova \cite{Fedosova_nonunitary}. 

M\"uller \cite{Mueller} considers \textit{compact} Riemannian 
locally symmetric spaces $\Gamma\backslash G/K$ (here, $G$ is a connected real 
semisimple Lie group of non-compact type with finite center, $K$ is a maximal 
compact subgroup of $G$, and $\Gamma$ is a \textit{torsion-free} 
\textit{cocompact} lattice in $G$) and arbitrary finite-dimensional 
representations $\chi\colon \Gamma\to \GL(V)$. In addition, he allows unitary 
twists of $K$, which we do not discuss here. He establishes an analogue of the 
Selberg \textit{trace} formula (which, for unitary representations, is known to 
be closely related to the Selberg zeta function) where the flat twisted 
connection Laplace operator $\Delta_\chi^\#$ takes the role of the Laplacian 
$\Delta$. Under the same conditions on $G$, $\Gamma$ and $\chi$, Deitmar and 
Monheim \cite{Deitmar_Monheim_traceformula} (see also Monheim's dissertation 
thesis \cite{Monheim}) provide a non-unitary Selberg trace formula for 
$\Gamma\backslash G$. In \cite{Deitmar_locally_compact} Deitmar generalizes 
these results to arbitrary locally compact groups~$G$. 

In \cite{Deitmar_Monheim_eisenstein, Monheim} Deitmar and Monheim consider 
hyperbolic surfaces $\Gamma\backslash\h$ of \textit{finite area} (thus, 
$G=\PSL_2(\R)$, $K=\PSO(2)$ and $\Gamma$ is a \textit{cofinite}, not necessarily 
cocompact Fuchsian group) and finite-dimensional representations 
$\chi\colon\Gamma\to\GL(V)$ that are \textit{unitary in the cusps}, i.\,e., for 
each parabolic element $p\in\Gamma$, the endomorphism $\chi(p)$ is unitary. They 
study convergence and meromorphic continuability for $\chi$-twisted Eisenstein series.

Spilioti \cite{Spilioti2015} considers \textit{compact} hyperbolic manifolds 
$\Gamma\backslash\h^n$ of \textit{odd dimension} $n$ (thus, $G=\SO(n,1)$, 
$K=\SO(n)$, $n$ odd, and $\Gamma$ is a \textit{torsion-free} \textit{cocompact} 
lattice in $G$) and arbitrary finite-dimensional representations $\chi\colon 
\Gamma\to \GL(V)$. Similar to M\"uller, she allows an additional unitary twist 
of the central elements in $K$. She shows convergence of the twisted (Ruelle and) 
Selberg zeta functions on certain half-planes in $\C$. Taking advantage of 
M\"uller's twisted Selberg trace formula she then proves meromorphic 
continuability of the twisted Selberg zeta functions to all of $\C$ and provides 
a spectral interpretation of their singularities. Fedosova 
\cite{Fedosova_nonunitary} generalizes these results to arbitrary (i.\,e., not necessarily torsion-free) cocompact 
lattices $\Gamma$ in~$G$.

In this paper we set out to study Selberg zeta functions $Z_{\Gamma,\chi}$ for 
arbitrary geometrically finite Fuchsian groups $\Gamma$ (including the 
non-cofinite ones) and finite-dimensional  representations $\chi\colon 
\Gamma\to\GL(V)$ which satisfy that for each parabolic element $p\in\Gamma$ each 
eigenvalue of the endomorphism $\chi(p)$ has absolute value $1$.

This property of $\chi$ was coined `non-expanding cusp monodromy' by Eskin, 
Kontsevich, M\"oller, and Zorich \cite{EKMZ} who prove lower bounds for Lyapunov 
exponents of flat bundles on curves, where the flat bundles are determined by 
such representations. Representations with this property also appeared in investigations of vector-valued and generalized modular forms \cite{Knopp_Mason_definition, Knopp20032, Knopp_Mason_illinois, Knopp2011, Knopp_Mason2012, Kohnen_Mason, Kohnen_Martin, Kohnen, Kohnen_Mason2}.

For cocompact lattices $\Gamma$ the class of representations with non-expanding 
cusp monodromy 
is identical to that of \textit{all} finite-dimensional representations of $\Gamma$. For 
non-cocompact Fuchsian groups this class subsumes all unitary representations, all representations that are 
unitary at cusps, all representations that are unipotent on all parabolic elements of $\Gamma$, as well as the restrictions to $\Gamma$ of the 
finite-dimensional representations of $\SL_2(\R)$, and it contains additional 
representations. We refer to Section~\ref{sec:repr} below for a more detailed 
discussion. 

In this generality (in particular with regard to the class of representations), 
the convergence of $Z_{\Gamma,\chi}$ has not been studied before, and the 
classical proof methods in their current state of art do not apply. Therefore, 
as our first main result we will establish its convergence on some right half-plane 
of $\C$ (see Theorem~\ref{thm:selberg_conv} below).

\begin{mainthm}\label{thmAnn}
Let $\Gamma$ be a geometrically finite Fuchsian group and 
$\chi\colon\Gamma\to\GL(V)$ a finite-dimensional representation with 
non-expanding cusp monodromy. Then the infinite product~\eqref{SZF_geom} 
converges for $\Rea s \gg 1$.
\end{mainthm}

In addition we will prove that Theorem~\ref{thmAnn} is sharp as stated, it cannot be generalized to a larger class of finite-dimensional representations.

\begin{mainprop}[Proposition~\ref{prop:thm_sharp} below]\label{propBnn}
Let $\Gamma$ be a geometrically finite Fuchsian group, and let $\chi\colon\Gamma\to\GL(V)$ be a finite-dimensional representation of $\Gamma$ that does not have non-expanding cusp monodromy. Then the infinite product~\eqref{SZF_geom} does not converge absolutely for any $s\in\C$.
\end{mainprop}

The combination of Theorem~\ref{thmAnn} and Proposition~\ref{propBnn} shows that the ultimate realm of finite-dimensional twists for Selberg zeta functions for Fuchsian groups (without using any additional type of regularization) is that of finite-dimensional representations with non-expanding cusp monodromy. Beyond this class, not even the first step in the definition of a twisted Selberg zeta function is tenable. 

Other interesting directions of research are those of Selberg zeta functions with twists by \emph{infinite-dimensional} representations, or for non-Fuchsian groups, or the extendability of the realm of twists by additional regularizations. These, however, will not be investigated here.

A crucial part---of independent interest---of the proof of Theorem~\ref{thmAnn} is to show 
that for every hyperbolic element $h\in\Gamma$, the trace of $\chi(h)$ grows at 
most exponentially with the length $\ell(h)$ of the periodic geodesic on 
$\Gamma\backslash\h$ associated to $h$ (or, equivalently, at most polynomially 
with the norm $N(h)$ of $h$).

\begin{mainprop}[Proposition~\ref{est_trace} below]\label{propCnn}
There exists $c\geq0$ such that uniformly for all hyperbolic elements 
$h\in\Gamma$, 
\[
 |\tr\chi(h)| \ll e^{c\ell(h)},
\]
and, even stronger,
\[
 \inf_{g\in\Gamma} \| \chi(ghg^{-1})\| \ll e^{c\ell(h)}.
\]
\end{mainprop}

For cocompact $\Gamma$, Proposition~\ref{propCnn} follows from the Lipschitz-equivalence of 
the metric on $\Gamma$ induced by the Riemannian metric on $\PSL_2(\R)$ and 
the word metric of $\Gamma$ (induced by any finite set of generators of 
$\Gamma$), see \cite{LMR, Spilioti2015}. If $\Gamma$ is cofinite and $\chi$ is 
uniform at the cusps, then Proposition~\ref{propCnn} has essentially already been proven in 
\cite{Deitmar_Monheim_eisenstein} (using \cite[Propositions~2.7 and 
2.9]{Deitmar_Monheim_eisenstein} in place of \eqref{est_nonparab} below). For 
the general case of arbitrary geometrically finite Fuchsian groups $\Gamma$ and 
finite-dimensional representations $\chi$ with non-expanding cusp monodromy, the 
proofs in \cite{LMR, Spilioti2015, Deitmar_Monheim_eisenstein} do not apply 
anymore. For this generality, it is essential to understand which weight $\chi$ 
attributes to  periodic geodesics which travel `high into the cusps' (long cusp 
excursions) or, equivalently, to understand the contribution of parabolic 
elements to the growth of $\chi$. In Section~\ref{sec:conv} below we will provide a 
detailed study, including a kind of logarithm law (see Lemma~\ref{jordanblock} 
below).

Proposition~\ref{propCnn} allows us to bound the \textit{twisted} Selberg zeta function 
$Z_{\Gamma,\chi}$  by a shift of the \textit{non-twisted} Selberg zeta function 
$Z_{\Gamma,\bf 1}$ for the trivial one-dimensional representation~$\bf 1$. The 
well-known convergence properties of $Z_{\Gamma,\bf 1}$ then yield Theorem~\ref{thmAnn}. 

The proof of Proposition~\ref{propCnn}, and hence of Theorem~\ref{thmAnn}, is uniform for all pairs 
$(\Gamma,\chi)$ of geometrically finite Fuchsian groups $\Gamma$ and 
finite-dimensional representations $\chi$ with non-expanding cusp monodromy.

Our second main result concerns the meromorphic continuability of the infinite 
product~\eqref{SZF_geom}. As already mentioned above, prior to this paper, 
meromorphic continuability of $Z_{\Gamma,\chi}$ has been known for 
\textit{unitary} representations only (note that \cite{Spilioti2015, 
Fedosova_nonunitary} consider odd-dimensional spaces only, thus they do not 
treat Fuchsian groups). For these investigations several methods (e.\,g.\@ trace 
formulas, microlocal analysis, transfer operator techniques) have been employed 
and applied to various classes of $(\Gamma,\chi)$, resulting in alternative or 
complementary proofs of meromorphic extendability in different generality. In 
particular the usage of trace formulas and microlocal analysis requires to 
restrict the uniformity of the considerations to certain classes of Fuchsian 
groups, e.\,g., cocompact or cofinite Fuchsian groups or Fuchsian Schottky 
groups. We refer to \cite{Selberg, Venkov_book, Patterson, Sullivan, Guillope, 
juhl2012cohomological, DyatlovGuillarmou, Ruelle_zeta, Fried_zetafunctionsI, 
Mayer_thermo, Mayer_thermoPSL, Bunke_Olbrich, gon2010zeta, DZ} for examples and 
more details.

The proof of meromorphic continuability for Selberg zeta functions with 
non-unitary twists in  \cite{Spilioti2015, Fedosova_nonunitary} uses harmonic 
analysis on symmetric spaces, in particular trace formulas and orbital 
integrals. Due to this approach their results are currently restricted to 
(compact) hyperbolic spaces/orbifolds of \textit{odd} dimension, a major problem 
being to guarantee that residues are integral. It would certainly be interesting 
to see if these methods can be adapted to even-dimensional spaces.

Also the results in \cite{Mueller, Deitmar_Monheim_traceformula, Monheim} on the 
existence of Selberg trace formulas twisted by non-unitary representations (for 
compact spaces) or by representations that are unitary at the cusps (for 
non-compact hyperbolic surfaces) rely on harmonic analysis of symmetric spaces. 
It would certainly be interesting to understand if these trace formulas integrate to Selberg zeta 
functions and to which extent they generalize to non-compact spaces, spaces of 
non-finite volume as well as beyond representations that are unitary at the 
cusps.

Nevertheless, in order to show the meromorphic continuability of 
\eqref{SZF_geom}, we here will develop transfer operator techniques which permit a 
uniform approach for all admissible combinations $(\Gamma,\chi)$. In particular, 
in contrast to some other methods, we will not need to distinguish between 
Fuchsian groups with and without cusps, or cocompact, cofinite and non-cofinite 
ones. 

However, we will suppose that $\Gamma$ admits a \textit{strict transfer operator 
approach}. This means, roughly, that there exists a transfer operator family 
$\TO_{\Gamma,s}$ such that 
\begin{equation}\label{fredholm_simple}
 Z_{\Gamma,\bf 1}(s) = \det\left(1 - \TO_{\Gamma,s}\right),\qquad \Rea s \gg 1.
\end{equation}
In other words, the Selberg zeta function $Z_{\Gamma,\bf 1}$ for the \textit{trivial 
one-dimensional} representation~$\bf 1$ is represented by the Fredholm 
determinant of a transfer operator family on some right half-plane in $\C$ where 
$Z_{\Gamma,\bf 1}$ is given by the infinite product~\eqref{SZF_geom}. We refer 
to Section~\ref{def_stoa} below for a precise definition.

In view of the already existing strict transfer operator approaches and the 
different methods for their construction \cite{Ruelle_zeta, 
Fried_zetafunctionsI, Artin, Series, Mayer_thermo, Mayer_thermoPSL, Bowen, 
Pollicott, Bowen_Series, Morita_transfer, Fried_triangle,  Patterson_Perry, 
Guillope_Lin_Zworski, Chang_Mayer_extension, Manin_Marcolli_transfer, 
Pohl_Symdyn2d, Moeller_Pohl, Pohl_representation}, it might well be that this 
requirement is not a severe restriction on $\Gamma$ at all. Moreover, it is most 
likely that with the methods we propose in this article the meromorphic 
continuability of \eqref{SZF_geom} can also be shown starting with \textit{non-strict} 
transfer operator approaches as provided in \cite{Pollicott, Morita_transfer, 
Mayer_Muehlenbruch_Stroemberg} (for certain classes of cofinite Fuchsian 
groups). In the latter case, the representation of $Z_{\Gamma,\bf 1}$ is of the 
form 
\[
 Z_{\Gamma,\bf 1}(s) = h(s) \det\left( 1 - \TO_{\Gamma,s}\right),\qquad \Rea s 
\gg 1
\]
where $h$ is a meromorphic function accounting for certain non-exact codings. A 
\textit{weak version} of our second main result reads as follows.

\begin{mainthm}\label{thmDnn}
Let $\Gamma$ be a geometrically finite Fuchsian group which admits a strict 
transfer operator approach, and let $\chi$ be a finite-dimensional 
representation of $\Gamma$ with non-expanding cusp monodromy. Then the infinite 
product~\eqref{SZF_geom} for $Z_{\Gamma,\chi}$ extends meromorphically to all of 
$\C$. The poles of $Z_{\Gamma,\chi}$ are contained in $\tfrac12(d_1-\N_0)$, 
where $d_1$ denotes the maximal length of a Jordan chain of $\chi(p)$ with eigenvalue $1$, 
$p\in\Gamma$ parabolic.
\end{mainthm}

In fact, we will prove a stronger version of Theorem~\ref{thmDnn} (see 
Theorem~\ref{mainthm:strong} below) for which we first show that the infinite 
product \eqref{SZF_geom} is represented by a twisted version of the transfer 
operator family:
\begin{equation}\label{fredholm_twisted}
 Z_{\Gamma,\chi}(s) = \det\left(1-\TO_{\Gamma,\chi,s}\right),\qquad \Rea s\gg 1.
\end{equation}
Then we will prove that not only the Selberg zeta function $Z_{\Gamma,\chi}$ and the Fredholm determinant $s\mapsto 
\det\left(1-\TO_{\Gamma,\chi,s}\right)$ admit a meromorphic continuation but 
also the map
\begin{equation}\label{TO_meromcont}
 s\mapsto \TO_{\Gamma,\chi,s}
\end{equation}
itself. In addition, we will provide upper bounds on the order of the poles.

The requirement that $\Gamma$ admit a strict transfer operator approach does not 
directly ask for the equality~\eqref{fredholm_simple} but for the existence of a 
suitable (uniformly expanding) iterated function system (IFS). If the representation $\chi$ is \textit{unitary}, then establishing 
\eqref{fredholm_simple}, the meromorphic continuation of \eqref{TO_meromcont} 
and (the strong version of) Theorem~\ref{thmDnn} starting from such an IFS is by now 
standard, see, e.\,g., \cite{Ruelle_zeta, Fried_zetafunctionsI, Mayer_thermo, 
Pohl_representation}. 

Hence, the focus in the proof of Theorem~\ref{thmDnn} is to accommodate 
\textit{non-unitary} representations. For \eqref{fredholm_twisted}, nuclearity 
of the twisted transfer operators is indispensable. To show this  property we will
rely on the logarithm law for cusp excursions as well as on the finiteness of 
the number of cusps of $\Gamma$. 

If $\chi$ is the trivial one-dimensional representation, then the standard proof of 
the meromorphic continuation of \eqref{TO_meromcont} takes advantage of the 
Hurwitz zeta function. For any unitary representations $\chi$, Pohl 
\cite{Pohl_representation} recently showed that this proof can be adapted by 
combining a diagonalization of $\chi(p)$, $p\in\Gamma$ parabolic, with the 
Lerch zeta function. For generic representations $\chi$ with non-expanding cusp 
monodromy, the endomorphisms $\chi(p)$, $p\in\Gamma$ parabolic, are not 
necessarily diagonalizable anymore. However, as we will show in 
Section~\ref{sec:meromorphic} below, a careful use of the Lerch transcendent 
and its derivatives in combination with Jordan normal forms of $\chi(p)$, 
$p\in\Gamma$ parabolic, allows us to establish meromorphic continuability.

As an additional result to those already mentioned, we will briefly discuss, in Section~\ref{sec:factor} below, that the 
Venkov--Zograf factorization formulas also hold for Selberg zeta functions with 
twists by representations with non-expanding cusp monodromy.

The next natural goal in this line of research is to seek for a spectral and 
topological characterization of the zeros of the Selberg zeta function 
$Z_{\Gamma,\chi}$. Answering this question we leave for future research. The 
first step in this direction is already done in \cite{FP_Eisenstein}, where the 
convergence of $\chi$-twisted Eisenstein series is addressed.

\subsubsection*{Acknowledgement} The authors wish to thank the Max Planck 
Institute for Mathematics in Bonn for financial support and excellent working 
conditions during the preparation of this manuscript. Further, AP acknowledges 
support by the DFG grant PO 1483/2-1. Both authors thank Roelof Bruggeman for his interest and his helpful remarks on this manuscript.

\section{Preliminaries}
\label{preliminaries}

\subsection{Hyperbolic geometry}
We use the upper half-plane model of the hyperbolic plane
\[
\h \coloneqq \{ x+iy \mid y > 0\}
\]
endowed with the hyperbolic metric
\begin{equation}\label{Riemannmetric}
ds^2 = \frac{dx^2 + dy^2}{y^2}.
\end{equation}
The group of 
orientation-preserving Riemannian isometries of $\h$ is isomorphic to
the group $\PSL_2(\R) = \SL_2(\R)/\{\pm \id\}$, considered as acting by 
fractional linear transformations on $\h$. 

By continuity, this action extends to the geodesic boundary $\partial\h$ of 
$\h$. If we identify $\partial\h$ with $\wh\R\coloneqq \R\cup\{\infty\}$ (or 
$P^1(\R)$) in the standard way, then the action on the geodesic closure
\[
 \overline{\h} = \h \cup \partial\h
\]
of $\h$ reads as follows: For $g=\textbmat{a}{b}{c}{d}\in \PSL_2(\R)$ and $z\in 
\overline{\h}$ we have
\[
g.z \coloneqq 
\begin{cases} 
\infty & \text{if $z=\infty$, $c=0$, or $z\not=\infty$, $cz+d=0$}
\\
\frac{a}{c} & \text{if $z=\infty$, $c\not=0$}
\\
\frac{az+b}{cz+d} & \text{otherwise.}
\end{cases} 
\]

For any hyperbolic element $h\in\PSL_2(\R)$ (i.\,e., $|\tr(h)|>2$) we let $N(h)$ 
denote the norm of $h$, thus
\begin{equation}\label{norm_h}
 N(h) \coloneqq \max\left\{ \lambda^{2} \ \left\vert\ \text{$\lambda$ 
eigenvalue of $h$} \vphantom{|\lambda|^{\frac12}}\right.\right\}.
\end{equation}

\subsection{Hyperbolic surfaces/orbifolds and periodic geodesics}
Let $\Gamma$ be a Fuchsian group. We denote by $[\Gamma]_\hyp$ the set of 
$\Gamma$-conjugacy classes of hyperbolic elements in $\Gamma$, and by 
$[\Gamma]_\prim$ the subset of $\Gamma$-conjugacy classes of primitive 
hyperbolic elements. For $h\in\Gamma$ hyperbolic, $[h]$ denotes the 
corresponding element in $[\Gamma]_\hyp$. 

Let  
\[
X \coloneqq \Gamma \backslash\h
\]
denote the (two-dimensional, connected, real hyperbolic, good) orbifold with fundamental 
group~$\Gamma$. As is well-known, the periodic geodesics on~$X$ are in bijection with~$[\Gamma]_\prim$. A natural isomorphism between periodic geodesics and~$[\Gamma]_\prim$, on which we will rely in 
Section~\ref{sec:conv} below, is given as follows: To a hyperbolic element 
$h\in\Gamma$ we assign the geodesic $\gamma_h$ on $\h$ for which 
$\gamma_h(+\infty)$ is the attracting fixed point of $h$, and $\gamma_h(-\infty)$ is the repelling fixed point. Note that $\gamma_{h^n} = \gamma_h$ 
for any $n\in\N$. Then we identify an equivalence class $[g]\in [\Gamma]_\prim$ of a \textit{primitive} hyperbolic element $g\in\Gamma$
with the periodic geodesic $\wh\gamma_g$ on $X$ for which $\gamma_g$ is a 
representing geodesic on $\h$. In this case, the (primitive) length 
$\ell(\wh\gamma_g)$ of $\wh\gamma_g$ is given by
\begin{equation}\label{lengthpergeod}
 \ell(\wh\gamma_g) = \log N(g).
\end{equation}
We remark that $\wh\gamma_g$ and $N(g)$ do not depend on the chosen representative $g\in\Gamma$ of $[g]\nobreak\in\nobreak[\Gamma]_\prim$. If we consider (as we shall) periodic geodesics on $X$ with length 
multiplicities, then this identication extends to all of $[\Gamma]_\hyp$. 

For any hyperbolic $h\in\Gamma$ we set
\[
 \ell(h) = \log N(h).
\]

\subsection{Selberg zeta function}\label{sec:szf}
Let $\Gamma$ be a geometrically finite Fuchsian group, and let $\chi\colon 
\Gamma\to \GL(V)$ be a finite-dimensional representation of $\Gamma$. The 
Selberg zeta function $Z_{\Gamma,\chi}=Z(\cdot, \Gamma,\chi)$ for 
$(\Gamma,\chi)$ is formally defined by 
\begin{equation}\label{pre_def}
 Z_{\Gamma,\chi}(s) \coloneqq Z(s,\Gamma,\chi) \coloneqq \prod_{[g]\in [\Gamma]_\prim} 
\prod_{k=0}^\infty \det\left( 1- \chi(g) N(g)^{-(s+k)}\right),
\end{equation}
where $1$ denotes the identity map on $V$. The relation to the 
definition in~\eqref{SZF_geom} in the Introduction is explained by 
\eqref{lengthpergeod}.

If $\chi$ is the trivial one-dimensional representation ${\bf 1}$, then the 
infinite product~\eqref{pre_def} is known to converge for $\Rea s \gg 1$ (more 
precisely, for $\Rea s> \delta$, where $\delta$ denotes the Hausdorff dimension 
of the limit set of $\Gamma$) \cite{Selberg, Venkov_book, Guillope, Patterson, 
Sullivan}. As recalled in the Introduction, convergence is more generally known 
if $\chi$ is unitary---a result we do not take advantage of in this article. 

For the elementary Fuchsian groups, the Selberg zeta function is most 
interesting for the hyperbolic funnel groups, that is, those Fuchsian groups 
that are generated by a single hyperbolic element. For the other elementary 
Fuchsian groups, the first product in \eqref{pre_def} is void, thus the Selberg 
zeta function is constant $1$ by convention.

\subsection{Representations}
Let $\Gamma$ be a Fuchsian group. A finite-dimensional (real) representation 
$\chi\colon\Gamma \to \GL(V)$
is said to have \textit{non-expanding cusp monodromy} if for each parabolic 
element $p\in\Gamma$, each eigenvalue of the endomorphism $\chi(p)$ has absolute 
value $1$.

\subsection{Further notations}

We use $\N = \{1,2,3,\ldots\}$ to denote the set of natural numbers (without 
zero), and let $\N_0 \coloneqq \N\cup\{0\}$. Further, for functions $f\colon M\to \C$, 
$g\colon M\to\R$ on some set $M$ we write $|f|\ll g$ or $f\ll g$ or $|f(m)|\ll 
g(m)$ or $f(m)\ll g(m)$ for 
\[
\exists\, C>0\ \forall\, m\in M\colon |f(m)|\leq C g(m).
\]
If the constant $C$ depends on an exterior variable, say $p$, and we need to 
keep track of this dependence ,then we indicate this variable in an subscript, 
e.\,g., $\ll_p$. Analogously, we use $\gg$ and $\gg_p$.

\section{Convergence of twisted Selberg zeta functions}\label{sec:conv}

Throughout this section let $\Gamma$ be a geometrically finite Fuchsian group 
and 
\[
\chi\colon \Gamma\to\GL(V)
\]
a finite-dimensional representation with non-expanding cusp monodromy. We will show 
that the infinite product in \eqref{pre_def} converges if $\Rea s$ is 
sufficiently large (depending on $\Gamma$ and $\chi$).

\begin{thm}\label{thm:selberg_conv}
There exists $C=C(\Gamma,\chi)\in\R$ such that the infinite product in 
\eqref{pre_def} converges compactly on the right half-plane $R\coloneqq \{ s\in\C 
\mid \Rea s > C\}$. Thus, on $R$, it defines a holomorphic function without zeros.
\end{thm}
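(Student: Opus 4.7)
My plan is to compare the twisted infinite product \eqref{pre_def} with a shifted copy of the untwisted Selberg zeta function $Z_{\Gamma,\mathbf{1}}$, whose compact convergence on the half-plane $\{\Rea s>\delta\}$ (with $\delta$ the Hausdorff dimension of the limit set of $\Gamma$) is classical and is equivalent to the absolute convergence of $\sum_{[g]\in[\Gamma]_{\prim}} N(g)^{-s}$ there. The bridge between the twisted and untwisted products is Proposition~B, which furnishes the exponential control $\inf_{g'\in\Gamma}\|\chi(g'hg'^{-1})\|\ll N(h)^{c}$ for some $c>0$ uniform in $h\in\Gamma$ hyperbolic.

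\textbf{Key estimate per class.} For each primitive class $[g]\in[\Gamma]_{\prim}$, both $N(g)$ and the local factor $\det\bigl(1-\chi(g)N(g)^{-(s+k)}\bigr)$ are invariant under $\Gamma$-conjugation of the representative. Using this invariance together with Proposition~B, I would choose, for each class, a representative $g$ satisfying $\|\chi(g)\|\ll N(g)^{c}$. Writing $n\sceq\dim V$ and applying the elementary bound $|\det(1+A)-1|\leq n\|A\|\exp(n\|A\|)$ to $A=-\chi(g)N(g)^{-(s+k)}$, for $\Rea s$ large enough to force $n\|A\|\leq \tfrac12$ uniformly in $[g]$ and $k\in\N_0$ (discreteness of $\Gamma$ guarantees that only finitely many classes have $N(g)$ near $1$, and these contribute only finitely many well-behaved factors), I obtain
\[
\bigl|\log\det\bigl(1-\chi(g)N(g)^{-(s+k)}\bigr)\bigr|\ll N(g)^{\,c-\Rea s-k}.
\]

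\textbf{Absolute convergence.} Summing geometrically in $k\in\N_0$ gives a contribution of order $N(g)^{\,c-\Rea s}/(1-N(g)^{-1})\ll N(g)^{\,c-\Rea s}$ per primitive class. The remaining sum $\sum_{[g]\in[\Gamma]_{\prim}} N(g)^{\,c-\Rea s}$ converges absolutely and locally uniformly on $\{\Rea s>\delta+c\}$ by the classical convergence of $Z_{\Gamma,\mathbf{1}}$. Setting $C\sceq\delta+c$, the logarithmic series for the twisted product converges absolutely and compactly on $R=\{\Rea s>C\}$, so \eqref{pre_def} defines a holomorphic, nowhere-vanishing function there.

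\textbf{Main obstacle.} With Proposition~B available, this comparison is essentially mechanical, and the entire substance of Theorem~\ref{thm:selberg_conv} is thereby packaged into Proposition~B. The only new point requiring care is ensuring that the determinant estimate is uniform across all primitive classes simultaneously, which accounts for the enlargement of the abscissa from $\delta$ to $\delta+c$. The genuinely difficult geometric content lives in Proposition~B itself, where one must control the Jordan-block growth of $\chi$ on parabolic elements and couple it with a logarithm law for cusp excursions of closed geodesics, as foreshadowed in the introduction.
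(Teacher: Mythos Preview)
Your proposal is correct and follows essentially the same approach as the paper: both use Proposition~B to compare $\log Z_{\Gamma,\chi}(s)$ with a shifted untwisted Selberg zeta function $\log Z_{\Gamma,\mathbf 1}(\Rea s-c)$, and both arrive at the abscissa $C=\delta+c$. The only cosmetic difference is that the paper expands $\log\det\bigl(1-\chi(g)N(g)^{-(s+k)}\bigr)$ via the trace series $-\sum_{n\geq 1}\tr\chi(g^n)N(g^n)^{-(s+k)}/n$ over all hyperbolic conjugacy classes and then invokes the trace bound $|\tr\chi(h)|\ll N(h)^{c}$, whereas you stay with primitive classes and bound each logarithmic determinant directly via the norm estimate $\|\chi(g)\|\ll N(g)^{c}$; either route works.
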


As already mentioned in the Introduction, the proof of 
Theorem~\ref{thm:selberg_conv} is based on bounding the Selberg zeta function 
$Z_{\Gamma,\chi}$ for the representation $\chi$ by the Selberg zeta function 
$Z_{\Gamma,\bf 1}$ for the trivial one-dimensional representation $\bf 1$, and 
then taking advantage of the fact that the convergence of $Z_{\Gamma,\bf 1}$ is 
well-known. More precisely, we find $c\in\R$ such that for all $s\in\C$ with 
$\Rea s$ sufficiently large we have
\begin{align*}
 \log Z(s,\Gamma,\chi) \ll |\log Z(\Rea s - c, \Gamma, {\bf 1})|.
\end{align*}
Proposition~\ref{est_trace} below is the key step for the proof that the shift 
in the argument of $Z_{\Gamma,{\bf 1}}$ is indeed uniform. This proposition 
makes crucial use of $\chi$ having non-expanding cusp monodromy.

Throughout we fix a norm $\|\cdot\|$ on $\GL(V)$. Since the trace norm is 
sub-multiplicative, and all norms on $\GL(V)$ are equivalent due to 
finite-dimensionality, $\|\cdot\|$ is essentially sub-multiplicative. Thus, 
there exists $C>0$ such that for all $g,h\in\Gamma$ we have the bound
\begin{equation}\label{esssubmult}
 \|\chi(gh)\| \leq C \|\chi(g)\| \cdot \|\chi(h)\|.
\end{equation}

\begin{prop}\label{est_trace}
There exists $c\geq0$ such that uniformly for all hyperbolic elements $h\in\Gamma$, 
we have
\begin{equation}\label{eq:est_trace}
 |\tr \chi(h) | \ll e^{c \ell(h)},
\end{equation}
or, equivalently,
\[
 |\tr \chi(h)| \ll N(h)^{c}.
\]
Moreover, 
\[
 \inf_{g\in\Gamma} \| \chi(ghg^{-1})\| \ll N(h)^c.
\]
In particular, all implied constants are independent of $h$.
\end{prop}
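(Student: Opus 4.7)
The plan is to reduce the bound to two separately controllable contributions: a \emph{compact part}, handled by a \v{S}varc--Milnor type Lipschitz comparison, and a \emph{parabolic part} coming from the excursions of the closed geodesic $\wh\gamma_h$ into the finitely many cusps of $X=\Gamma\bs\h$. Since $|\tr A|\le\dim V\cdot\|A\|$ and traces are conjugation invariant, it suffices to prove the stated norm bound; the trace estimate then follows at once, and we are free to replace $h$ by any convenient conjugate.

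I will first fix generating data: a finite symmetric generating set $S$ of $\Gamma$ (which exists by geometric finiteness), together with, for each of the finitely many cusps $\mathfrak{c}$ of $X$, a generator $p_{\mathfrak{c}}\in\Gamma$ of its stabilizer; let $d_0$ denote the maximal Jordan block size of the $\chi(p_{\mathfrak{c}})$. I choose precisely invariant horoball neighbourhoods $U_{\mathfrak{c}}$ of the cusps and replace $h$ by the $\Gamma$-conjugate whose associated closed geodesic $\wh\gamma_h$ starts outside $\bigcup_{\mathfrak{c}} U_{\mathfrak{c}}$. Decomposing $\wh\gamma_h$ into an alternating sequence of maximal \emph{cusp excursions} $E_1,\dots,E_r$ and \emph{compact arcs} $R_0,\dots,R_r$ lying in a fixed compact subset of $X$ yields a factorisation $h=w_0\,q_1\,w_1\,q_2\cdots q_r\,w_r$ in $\Gamma$, in which each $w_j$ is a word in $S$ encoding $R_j$ and each $q_j=g_j p_{\mathfrak{c}(j)}^{n_j}g_j^{-1}$ encodes the excursion~$E_j$.

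The heart of the argument, and the main obstacle I anticipate, is a \emph{logarithm law} bounding each $|n_j|$ in terms of $\ell(E_j)$ (to be formalized as Lemma~\ref{jordanblock}). Lifting to $\h$, an excursion entering a horoball to hyperbolic depth $d$ can shift along its summit horocycle by at most a bounded multiple of $e^d$ periods of $p_{\mathfrak{c}}$, while the hyperbolic length of the excursion is at least $2d-O(1)$; hence $\log|n_j|\le\tfrac12\ell(E_j)+O(1)$. The non-expanding cusp monodromy assumption forces each $\chi(p_{\mathfrak{c}})$ to have all eigenvalues of modulus~$1$, and its Jordan normal form then gives the polynomial estimate $\|\chi(p_{\mathfrak{c}}^{\,n})\|\ll (1+|n|)^{d_0-1}$. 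Combined with essential sub-multiplicativity~\eqref{esssubmult}, I obtain
\[
 \|\chi(q_j)\|\ll \|\chi(g_j)\|\cdot\|\chi(g_j^{-1})\|\cdot e^{(d_0-1)\ell(E_j)/2+O(1)},
\]
and the conjugating elements $g_j$, chosen so that an endpoint of $E_j$ lies within bounded hyperbolic distance of a $\Gamma$-translate of the basepoint, have uniformly bounded word length in $S$ and hence uniformly bounded $\|\chi(g_j^{\pm 1})\|$.

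For the compact arcs $R_j$, the \v{S}varc--Milnor argument applied to the cocompact action of $\Gamma$ on the thick part of its convex core gives $|w_j|_S\ll \ell(R_j)+1$; combined with $\max_{s\in S}\|\chi(s)\|<\infty$ this yields $\|\chi(w_j)\|\ll e^{c_0\ell(R_j)+c_1}$, which is essentially the non-parabolic estimate \eqref{est_nonparab}. Applying~\eqref{esssubmult} to the factorisation $h=w_0q_1w_1\cdots q_rw_r$ and summing exponents then gives
\[
 \|\chi(h)\|\ll e^{c\sum_j(\ell(R_j)+\ell(E_j))}=e^{c\,\ell(h)},
\]
with $c$ depending only on $\Gamma$ and $\chi$; this is the asserted norm bound, and the two trace-style inequalities in the statement follow immediately.
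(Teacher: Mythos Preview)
Your proof is correct and follows essentially the same strategy as the paper's: decompose the closed geodesic into compact and cuspidal pieces, control the parabolic powers via the polynomial bound $\|\chi(p^n)\|\ll n^{d_0-1}$ coming from non-expanding cusp monodromy, and bound those powers by a logarithm law for cusp excursions. The differences are in the bookkeeping rather than in substance.

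The paper carries this out via Morse coding relative to a fixed Dirichlet domain: it writes $h$ as a word in the side-pairing generators, groups consecutive repetitions of a parabolic generator into ``blocks,'' and invokes the Eichler/Knopp--Sheingorn inequality $b_D(h)\ll d_\h(i,h.i)$ to control the number of blocks. Your version replaces this by the thick--thin decomposition of the convex core together with \v{S}varc--Milnor on the thick part; this is a legitimate alternative, and your bound $|w_j|_S\ll \ell(R_j)+1$ holds because each $R_j$ stays inside the cocompact thick part, so the path metric there agrees with the hyperbolic length along $R_j$. Your logarithm law $\log|n_j|\le\tfrac12\ell(E_j)+O(1)$ is the sharper form of what the paper proves more coarsely as $m_j\ll e^{4t_j}$ via an explicit $\arcosh$ computation. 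Two small remarks: the multiplicative constants accumulated from~\eqref{esssubmult} and from the $O(1)$ terms contribute a factor $e^{O(r)}$, which you should note is absorbed since the number $r$ of excursions is $\ll\ell(h)$ (disjoint horoballs are uniformly separated); and in the paper Lemma~\ref{jordanblock} is the polynomial growth estimate for $\|\chi(p^m)\|$, not the excursion logarithm law, so your cross-reference is off by one lemma.
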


For elementary Fuchsian groups, Proposition~\ref{est_trace} is obvious. For the 
proof of Proposition~\ref{est_trace} for non-elementary Fuchsian groups we need 
some preparations. In Section~\ref{block_bound} below we will recall an estimate of 
word lengths (or rather `block' lengths) of hyperbolic elements in terms of the 
displacement of the point $i$ of the hyperbolic plane $\h$ as provided by 
Eichler \cite{Eichler} and by Knopp and Sheingorn \cite{Knopp_Sheingorn}. This 
result allows us to bound the number of cusp excursions of a periodic geodesic. 

In Section~\ref{sec:growth} below we will provide, for every parabolic element 
$p\in\Gamma$, a polynomial bound on $\chi(p^m)$ as $m\to\infty$ (the 
logarithm-type law mentioned in the Introduction). In Section~\ref{def_r0} below 
we will recall the notion of horoballs and their relation to Dirichlet fundamental 
domains, which obviously is closely related to Siegel sets. For every periodic 
geodesic $\wh\gamma$ on $\Gamma\backslash\h$, the latter relation allows us to 
estimate how deep $\wh\gamma$ travels into the cusp represented by the parabolic 
element~$p$. This permits us to deduce bounds on the power $m$ with which $p^m$ 
appears in the word/block representation of a hyperbolic element $h$ associated 
to $\wh\gamma$. These bounds grow at most polynomial (with uniform degree) in the 
norm $N(h)$ of $h$ or, equivalently, at most exponential in the length 
$\ell(\wh\gamma)$ of $\wh\gamma$.

\subsection{Bounds on block lengths}\label{block_bound}
We recall an upper estimate of `block' lengths of hyperbolic elements $h$ in 
terms of the displacement of the point $i$ by~$h$. This estimate was proved by 
Eichler \cite{Eichler} for Fuchsian lattices and was then generalized by Knopp 
and Sheingorn \cite{Knopp_Sheingorn} to all geometrically finite Fuchsian 
groups. 

Let $D$ be a Dirichlet fundamental domain for $\Gamma$, and let $\mc 
A\subseteq\Gamma$ be the set of side-pairing elements of $D$. We present each 
element $g\in\Gamma$ as a word over $\mc A$ via the standard Morse coding 
algorithm induced by $D$. This means that the presentation $g=A_1A_2\cdots A_n$ 
as a word over $\mc A$ is deduced as follows: We fix a point $x$ in $D$ and a 
point $y$ in $g.D$ such that the geodesic $\gamma$ (on $\h$) connecting $x$ and 
$y$ stays away from the vertices of $k.D$ for all $k\in\Gamma$. Thus, when 
moving along the geodesic $\gamma$ from $x$ to $y$, one passes through a 
(unique) sequence of the form 
\[
D,\ A_1.D,\ A_1A_2.D,\ \ldots,\ A_1A_2\cdots A_n.D = g.D
\]
with $A_1,\ldots, A_n\in\mc A$. This sequence is indeed independent of the 
choice of $x$ and $y$ as long as the connecting geodesic $\gamma$ satisfies the 
condition mentioned. In each step, the element $A_i\in\mc A$ is determined by 
the side of $A_1\cdots A_{i-1}.D$ through which $\gamma$ passes, or 
equivalently, the side $S_i$ of $D$ through which $A_{i-1}^{-1}\cdots 
A_1^{-1}.\gamma$ passes. It is the unique element $A_i$ in 
$\Gamma\smallsetminus\{\id\}$ (or, equivalently, in $\mc A$) such that $A_i.S_i$ is 
again a side of $D$.

We arrange the presentation $g=A_1\cdots A_n$ as follows into `blocks': Each 
non-parabolic element $A\in\mc A$ forms a block on its own, and each locally 
maximal subword of $A_1\cdots A_n$ of the form $A^k=A\cdots A$ ($k$-times) with 
$A\in\mc A$ being parabolic and $k\in\N$ forms a block. Here, `locally maximal' 
means that 
\[
A_1\cdots A_n = A_1\cdots A_{i} A^k A_{i+k+1}\cdots A_n
\]
with $A_i\not=A\not=A_{i+k+1}$ (thus, $k$ is maximal). Let $b_D(g)$ denote the 
number of blocks of $g$. Note that $b_D$ may depend on the choice of $D$.

Let $d_\h$ denote the metric on $\h$ induced by the Riemannian 
metric~\eqref{Riemannmetric}. By Eichler \cite{Eichler} and Knopp and 
Sheingorn \cite{Knopp_Sheingorn} there exist constants $c_1,c_2>0$, 
possibly depending on the choice of $D$, such that for all \textit{hyperbolic} 
elements $h\in\Gamma$, we have
\begin{equation}\label{bound_block}
 b_D(h) \leq c_1 \cdot d_\h(i,h.i) + c_2.
\end{equation}

\subsection{Growth of norms}\label{sec:growth}

We show that for every parabolic element $p\in\Gamma$, the norm of $\chi(p^m)$ 
grows at most polynomially as $m\to\infty$. The degree of the polynomial is 
uniformly (over all parabolic elements) bounded by $\dim V -1$.

\begin{lemma}\label{jordanblock}
Let $p\in\Gamma$ be parabolic, $d_p\in\N$, and suppose that the lengths of the 
Jordan chains of $\chi(p)$ are bounded by $d_p$. Then
\[
 \| \chi(p^m) \| \ll_p m^{d_p-1}
\]
for all $m\in\N$.
\end{lemma}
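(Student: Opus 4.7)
The plan is to reduce the estimate to powers of Jordan blocks by passing to the Jordan normal form of $\chi(p)$. Since the bound is allowed to depend on $p$, we may freely absorb the norms of the base-change matrix and any finite constants coming from the Jordan data of $\chi(p)$ into the implied constant.

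First, I would invoke the hypothesis that $\chi$ has non-expanding cusp monodromy: since $p$ is parabolic, every eigenvalue $\lambda$ of $\chi(p)$ satisfies $|\lambda|=1$. Write a Jordan decomposition $\chi(p) = S(D+N)S^{-1}$, where $D$ is diagonal with the eigenvalues of $\chi(p)$ on the diagonal, $N$ is nilpotent, and $D$ and $N$ commute (each Jordan block splits as scalar-plus-nilpotent on its block). By assumption the nilpotency index of $N$ is at most $d_p$, so $N^{d_p}=0$.

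Next, since $D$ and $N$ commute, the binomial theorem yields
\[
\chi(p^m) \;=\; S(D+N)^m S^{-1} \;=\; S\sum_{k=0}^{d_p-1}\binom{m}{k} D^{m-k} N^k S^{-1}.
\]
Because $D$ is diagonal with entries of modulus $1$, we have $\|D^{m-k}\|\ll_p 1$ uniformly in $m$ and $k$; each $\|N^k\|$ for $k<d_p$ contributes a constant depending only on $p$; and $\binom{m}{k}\le m^k\le m^{d_p-1}$ for $m\ge 1$ and $0\le k\le d_p-1$. Using the essentially sub-multiplicative property \eqref{esssubmult} (or simply absorbing $\|S\|\,\|S^{-1}\|$ into the $p$-dependent constant), one concludes
\[
\|\chi(p^m)\| \;\ll_p\; \sum_{k=0}^{d_p-1} m^k \;\ll_p\; m^{d_p-1},
\]
which is the desired bound.

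I do not foresee a genuine obstacle here: the statement is essentially linear algebra together with the unit-modulus condition on eigenvalues of $\chi(p)$. The only point that requires a moment of care is that the Jordan form, the base-change matrix $S$, and the nilpotency index $d_p$ depend on $p$; this is exactly why the estimate is formulated with the subscript $\ll_p$ rather than as a constant uniform over parabolic elements. Uniformity over the (finitely many cusp) classes of parabolic generators, which will be needed later, will have to be extracted separately, but it is not part of the present lemma.
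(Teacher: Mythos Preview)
Your proof is correct and follows essentially the same approach as the paper: pass to Jordan normal form, expand $(D+N)^m$ via the binomial theorem, use $|\lambda|=1$ from the non-expanding cusp monodromy hypothesis, and bound $\binom{m}{k}$ by $m^{d_p-1}$. The paper works block by block and phrases the estimate as a maximum over entries, but the content is identical.
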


\begin{proof}
We fix a basis of $V$ with respect to which $\chi(p)$ is represented in Jordan 
normal form. Let $r\in\N$ be the number of Jordan blocks, $k_1,\ldots, k_r\in\N$ 
the lengths of the Jordan chains, and $\lambda_1,\ldots, \lambda_r\in\C$ the 
corresponding eigenvalues of $\chi(p)$. Then 
\[
 \| \chi(p^m) \| \ll_p \max_{j=1}^r \max_{\ell=0}^{k_j-1} \left\{ 
|\lambda_j|^{m-\ell} {m\choose \ell} \right\}.
\]
By hypothesis, $k_j\leq d_p$ for $j=1,\ldots, r$. Further, since $\chi$ has 
non-expanding cusp monodromy, $|\lambda_j|=1$ for $j=1,\ldots, r$. Thus
\[
\max_{j=1}^r \max_{\ell=0}^{k_j-1} \left\{ |\lambda_j|^{m-\ell} {m\choose \ell} 
\right\} \leq \max_{l=0}^{d_p - 1} {m\choose l} = \begin{cases}
{m\choose d_p-1} \quad\text{for $d_p -1 < 
\left\lfloor\frac{m}{2}\right\rfloor$}, \\[3mm] 
{m \choose \left\lfloor\frac{m}{2}\right\rfloor} \quad \text{for $d_p -1 \geq 
\left\lfloor\frac{m}{2}\right\rfloor.$} 
\end{cases}
\]
Hence,
\[
\max_{j=1}^r \max_{\ell=0}^{k_j-1} \left\{ |\lambda_j|^{m-\ell} {m\choose \ell} 
\right\} \ll m^{d_p -1}.
\]
This completes the proof.
\end{proof}

\subsection{Return bounds}\label{def_r0}

Recall that $d_\h$ denotes the metric on $\h$ induced by the Riemannian 
metric~\eqref{Riemannmetric}, and let $\gamma$ be a geodesic on $\h$. Then the 
\textit{Buseman function} $b_\gamma\colon \h\to \R$ is defined by 
\[
 b_\gamma(z) \coloneqq \lim_{t\to\infty} \big( t - d_\h(z,\gamma(t)) \big).
\]
Let $c\in\partial \h$ and $r\in\R$. The \textit{horoball} $B(c,r)$ centered at 
$c$ with `radius' $r$ is the sublevel set
\[
 B(c,r) \coloneqq b_\eta^{-1}\big( [r,\infty) \big),
\]
where $\eta$ is any geodesic on $\h$ with $\eta(\infty) = c$. If $c \in 
\overline{\h}$ is cuspidal with respect to $\Gamma$, i.e., $c$ is fixed by a 
primitive parabolic element $p$ in $\Gamma$, then there exists a radius $r=r(c)$ 
such that whenever $z,w\in B(c,r)$ are $\Gamma$-equivalent, hence $g.z=w$ for 
some $g\in\Gamma$, then $g=p^m$ for some $m\in\Z$. Even more, if $D$ is a 
Dirichlet fundamental domain for $\Gamma$, $\mc A$ is the set of side-pairing 
elements, and  $\mc P\subseteq\mc A$ its subset of the parabolic elements among 
the side-pairing elements, then there exists $r_0\in\R$ such that for each 
$p\in\mc P$, 
\[
 B(c_p, r_0) \subseteq \bigcup_{m\in\Z} p^{m}.D,
\]
where $c_p$ denotes the fixed point of $p$ (see, e.\,g., 
\cite[Section~9.4]{Beardon}, in particular \cite[Theorem~9.4.5]{Beardon}). We call each 
such $r_0\in\R$ a \textit{return bound} for $D$.

\subsection{Proof of Proposition~\ref{est_trace}}
Let $D$ be a Dirichlet fundamental domain for $\Gamma$ which we fix once and for 
all.  In the following, all of the implied constants as well as the numerically 
unspecified constants may depend on the choice of $D$ but they are independent 
of any specially considered element in $\Gamma$. 

Let $\mc A$ denote the set of side-pairing elements of $D$, let 
\[
 K \coloneqq \max_{A\in\mc A} \|\chi(A)\|
\]
and recall the definition of block presentations and the block counting 
function 
\[
b\coloneqq b_D
\]
from Section~\ref{block_bound}. Further let $D_0$ denote the (compact) subset of 
$\overline{D}$ which represents the compact core of $\Gamma\backslash\h$ (or, 
more generally, let $D_0$ be any compact subset of $\overline{D}$ which 
represents a compact subset of $\Gamma\backslash\h$ that is intersected by all 
periodic geodesics).

Since every periodic geodesic on $\Gamma\backslash\h$ intersects the compact 
core, every $\Gamma$-conjugacy class of hyperbolic elements in $\Gamma$ contains 
at least one representative $h$ whose associated geodesic $\gamma_h$ on $\h$ 
intersects $D_0$. It suffices to prove \eqref{eq:est_trace} for one such 
representative $h$ out of each $\Gamma$-conjugacy class of hyperbolic elements 
since $\tr\circ\chi$ as well as the length function $\ell$ are constant on these 
conjugacy classes.

Let $h\in\Gamma$ be a hyperbolic element such that its associated geodesic 
$\gamma_h$ on $\h$ intersects $D_0$. Suppose that 
\[
 h=h_1\cdots h_{b(h)}
\]
is its block presentation. Let
\[
 J_p \coloneqq \big\{ j\in \{1,\ldots, b(h)\}\ \big\vert\ \text{$h_j$ parabolic} 
\big\} 
\]
be the set of indices corresponding to the parabolic blocks, and set
\[
 J_n \coloneqq \{1,\ldots, b(h)\}\smallsetminus J_p.
\]
Further set 
\[
 b_p \coloneqq \# J_p \quad\text{and}\quad  b_n \coloneqq \# J_n,
\]
so that $b_p + b_n = b(h)$. Then
\begin{align}\label{tr_est1}
 | \tr \chi(h)| \ll \| \chi(h) \|  & \leq C^{b(h)-1} \prod_{j=1}^{b(h)} \| \chi(h_j) 
\|  
\\
& = C^{b(h)-1}\left( \prod_{i\in J_n} \| \chi(h_i)\| \right)\left( \prod_{j\in J_p} \| 
\chi(h_j)\| \right) \nonumber
\end{align}
with $C$ as in \eqref{esssubmult}. For each $i\in J_n$, the block $h_i$ consists 
of a single element in $\mc A$. Hence
\begin{equation}\label{est_nonparab}
 \prod_{i\in J_n} \| \chi(h_i)\|  \leq K^{b_n}.
\end{equation}

For each $k\in J_p$, the block $h_j$ equals $p_j^{m_j}$ for some parabolic 
element $p_j\in\mc A$ and $m_j\in\N$. Let $d_0$ denote the maximal length of a 
Jordan chain of $\chi(p)$, $p\in\mc A$ parabolic. Further let $\tilde C>0$ be an 
upper bound for the implied constants in Lemma~\ref{jordanblock}, when applied 
to the finitely many parabolic elements in $\mc A$. Then Lemma~\ref{jordanblock} 
yields
\begin{equation}\label{est_parab}
 \prod_{j\in J_p} \| \chi(h_j)\| \leq \tilde C^{b_p} \prod_{j\in J_p} 
m_j^{d_0-1}.
\end{equation}

In order to bound $m_j$ for $j\in J_p$ we fix a point $z\in \gamma_h(\R)\cap 
D_0$. Without loss of generality, we may assume that $z$ serves to deduce the 
presentation of $h$ as a word over $\mc A$ as in Section~\ref{block_bound}. 
Thus, the geodesic segment $c_h$ connecting $z$ and $h.z$ is a segment of 
$\gamma_h$, and 
\begin{equation}\label{splitpoint}
 d_\h(z, h.z) = \ell(h) = \ell(c_h).
\end{equation}
Here, $\ell(c_h)$ denotes the length of $c_h$.

Let $j\in J_p$. Let $b_j$ denote the fixed point of the parabolic element $p_j$ 
and let $\gamma_j \coloneqq h_{j-1}^{-1}\cdots h_1^{-1}.\gamma_h$. Since 
$h_j=p_j^{m_j}$, the geodesic $\gamma_j$ passed through $A.D$ for some $A\in\mc 
A\smallsetminus\{p_j\}$ before entering $D$, then passes through $D$, $p_j.D$, 
$\ldots$, $p_j^{m_j}.D$ and finally enters into $p_j^{m_j}A'.D$ for some 
$A'\in\mc A\smallsetminus\{p_j\}$. Set
\[
 D_j\coloneqq \bigcup_{k=0}^{m_j} p_j^k.D
\]
and let  
\[
 t_j \coloneqq \mu_\Leb\! \left( \big\{ t \in \R \ \big\vert\  \gamma_j(t) \in D_j \big\} 
\right)
\]
be the time (or, equivalently, the length) that $\gamma_j$ spends in $D_j$. 
Here, $\mu_\Leb$ denotes the Lebesgue measure on $\R$ (normalized such that 
$\mu_\Leb([0,1])=1$). Let $S_1,S_2$ be the two sides of $D$ which meet at $b_j$, and 
suppose that $\gamma_j$ exits $D$ through $S_2$. Then, for $k=1,\ldots, m_j-1$, 
it enters $p_j^k.D$ through $p_j^k.S_1 = p_j^{k-1}.S_2$ and exits through 
$p_j^k.S_2$. Suppose that 
\[
 a_j \coloneqq \inf_{k=1}^{m_j-1} \mu_\Leb\! \left( \big\{ t\in\R \ \big\vert\ \gamma_j(t) 
\in p_j^k.D \big\}\right)
\]
denotes the minimal time that $\gamma_j$ spends in any of the $p_j^k.D$, 
$k=1,\ldots, m_j-1$. Then
\[
 m_j -1 \leq \frac{t_j}{a_j}.
\]
In order to provide a lower bound on $a_j$, let $r_0\in\R$ be a return bound for 
$D$ (cf.\@ Section~\ref{def_r0}) and recall that 
\[
 B(b_j, r_0) \subseteq \bigcup_{n\in\Z} p_j^n.D.
\]
Obviously, the time that $\gamma_j$ spends in $B(b_j,r_0)$ is bounded by $t_j$. 
Therefore, $\gamma_j$ does not enter $B(b_j, r_0+t_j)$. Let
\[
 H \coloneqq D\cap B(b_j,r_0)\smallsetminus B(b_j,r_0+t_j)
\]
and let $\mc C$ denote the set of geodesics $\gamma$ on $\h$ that enter $H$ 
through $S_1$ and exit through $S_2$. Then $a_j$ is bounded below by
\[
 a'_j \coloneqq \inf_{\gamma\in\mc C} \mu_\Leb\!\left( \{ t\in\R\mid \gamma(t) \in 
H\}\right).
\]
Let $w_j$ be the cusp width of $D$ at $b_j$. By elementary hyperbolic geometry,
\[
 a'_j  = \arcosh\left( 1 + \frac{w_j}{2e^{2r_0}} e^{-2t_j}\right) \gg e^{-3t_j}.
\]
Therefore,
\begin{align*}
 m_j \leq \frac{t_j}{a_j} + 1 \ll t_j e^{3t_j} + 1 \ll e^{4t_j}.
\end{align*}
Since
\[
 \sum_{j\in J_p} t_j \leq \ell(c_h) = \ell(h),
\]
we find
\begin{equation}\label{est_m}
 \prod_{j\in J_p} m_j^{d_0-1} \ll e^{4(d_0-1)\ell(h)}.
\end{equation}
The combination of \eqref{tr_est1}, \eqref{est_nonparab}, \eqref{est_parab} and 
\eqref{est_m} yields
\begin{equation}
 C |\tr \chi(h) | \ll C \|  \chi(h)\| \leq C_1^{b(h)}e^{4(d_0-1) \ell(h)}
\end{equation}
for some $C_1>0$. By \eqref{bound_block},
\[
 C_1^{b(h)} \ll e^{c_1\cdot d_\h(i,h.i)}
\]
for some $c_1>0$. The triangle inequality, the left-invariance of the metric 
$d_\h$, and the compactness of $D_0$ imply the existence of  $c_2\geq0$ such that
\begin{align}
 d_\h(i, h.i) &\leq  d_\h(i, z) + d_\h(z, h.z) + d_\h(h.z, h.i)\leq 2 d_\h(i, z) 
+ d_\h(z, h.z) \label{triangle}
 \\
 & \leq c_2 + \ell(h) . \nonumber
\end{align}
Hence there exists  $c\geq0$ such that
\[
 |\tr \chi(h) | \ll \|\chi(h)\| \ll e^{c \ell(h)}.
\]
This proves Proposition~\ref{est_trace}. \qed

\subsection{Proof of Theorem~\ref{thm:selberg_conv}}
For any hyperbolic element $g\in\Gamma$ let $m(g)\in\N$ denote the unique number 
(`multiplicity') such that $g=g_0^{m(g)}$ for a primitive hyperbolic element 
$g_0\in\Gamma$.

By Proposition~\ref{est_trace} we find $c\geq 0$ (which we fix throughout) such that 
uniformly for all hyperbolic elements $h\in\Gamma$ we have
\begin{equation}\label{bound_trace}
 | \tr\chi(h)| \ll N(h)^c 
\end{equation}
and 
\begin{equation}\label{bound_norm}
 \inf_{k\in\Gamma}\|\chi(khk^{-1})\| \ll N(h)^c.
\end{equation}
For $s\in\C$, $\Rea s > c$, the bound~\eqref{bound_norm} yields that for each 
$[g]\in [\Gamma]_\prim$ there exists a representative $g\in\Gamma$ such that
\[
\|\chi(g)N(g)^{-s}\|<1.
\]
It follows that, for all $k\in\N_0$, the equality
\[
 \tr\log\left(1-\chi(g)N(g)^{-(s+k)} \right) = -\sum_{n=1}^\infty 
\frac{N(g^n)^{-(s+k)}}{n}\tr \chi(g^n)
\]
is valid and the right hand side's series is convergent. Therefore, for $\Rea s 
> c$, we have the formal identity
\begin{equation}\label{logszf}
 \log Z(s,\Gamma,\chi) = -\sum_{[g]\in [\Gamma]_\hyp} \frac{1}{m(g)} 
\frac{N(g)^{-s}}{1-N(g)^{-1}} \tr\chi(g)
\end{equation}
and the convergence of the infinite product \eqref{pre_def} is equivalent to the 
convergence of either side  of \eqref{logszf}.

Recall that $\bf 1$ denotes the trivial one-dimensional representation of 
$\Gamma$. Combining \eqref{logszf} and \eqref{bound_trace} we find 
\begin{align*}
|\log Z(s,\Gamma,\chi)| & \leq \sum_{[g]\in [\Gamma]_\hyp} \frac{1}{m(g)} 
\frac{N(g)^{-\Rea s}}{1-N(g)^{-1}} |\tr\chi(g)|
\\
& \ll \sum_{[g]\in [\Gamma]_\hyp} \frac{1}{m(g)} \frac{N(g)^{-(\Rea 
s-c)}}{1-N(g)^{-1}} 
\\
& = |\log Z(\Rea s-c,\Gamma, \bf 1)|.
\end{align*}
The map $s\mapsto\log Z(s-c, \Gamma, \bf 1)$ converges compactly on a right 
half-plane in $\C$ (see Section~\ref{sec:szf}). More precisely, it converges 
compactly on $\{ \Rea s  > \delta+c\}$, where $\delta$ is the Hausdorff 
dimension of the limit set of $\Gamma$. This completes the proof. \qed

\subsection{A side-result}

A variation of the proof of Proposition~\ref{est_trace} shows an interesting 
relation between the norm of $\|\chi(h)\|$ and the displacement $d_\h(z,h.z)$ for 
all hyperbolic elements $h\in\Gamma$. Even though we do not need this result in 
this article, we record it here for purposes of reference. This result is 
crucial for the study of $\chi$-twisted Eisenstein series \cite{FP_Eisenstein}.

\begin{cor}
There exists $c\geq0$ and a continuous map $\Dfunc\colon\h\to\R$ such that uniformly for 
all hyperbolic elements $h\in\Gamma$ and all $z\in\h$, we have
\[
 \| \chi(h)\| \leq \Dfunc(z) e^{c \cdot d_\h(z, h.z)}.
\]
\end{cor}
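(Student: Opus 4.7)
The plan is to revisit the proof of Proposition~\ref{est_trace} and track its dependence on the base point and on the element $h$. Inspection of that proof shows that its essential ingredients---the block decomposition of a group element along a geodesic segment from a base point to its translate, the logarithm-type bound $m_j\ll e^{4 t_j}$ on the parabolic exponents in that decomposition, and the aggregate cusp-time inequality $\sum_{j\in J_p} t_j\le d_\h(z,g.z)$---rely only on the local hyperbolic geometry near cusps; none of them uses that the element be hyperbolic or that the base point lie on an axis. Combining the adapted argument with the Eichler--Knopp--Sheingorn bound $b(g)\le c_1\,d_\h(i,g.i)+c_2$ and the triangle inequality $d_\h(i,g.i)\le 2\,d_\h(i,z)+d_\h(z,g.z)$ yields, for every $g\in\Gamma$ and every $z\in\overline{D}$,
\[
 \|\chi(g)\|\ll e^{\alpha\cdot d_\h(i,z)}\,e^{c\cdot d_\h(z,g.z)},
\]
with constants $\alpha,c>0$ depending only on $(\Gamma,\chi)$.

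Specialising to $z=i$ (which we may assume to lie in $D$) gives, for every $g\in\Gamma$,
\[
 \|\chi(g)\|\ll e^{c\cdot d_\h(i,g.i)},
\]
a version of Proposition~\ref{est_trace} valid for arbitrary elements of $\Gamma$; this bound will absorb the conjugation factor appearing below.

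Given $z\in\h$ and a hyperbolic $h\in\Gamma$, choose $k\in\Gamma$ with $z\in k.\overline{D}$ and set $w:=k^{-1}.z\in\overline{D}$ and $h':=k^{-1}hk$. Then $d_\h(w,h'.w)=d_\h(z,h.z)$, $d_\h(i,w)=d_\h(k.i,z)\le d_\h(i,z)$, and $d_\h(i,k.i)\le d_\h(i,z)+d_\h(z,k.i)\le 2\,d_\h(i,z)$. The first display applied to $h'$ at base point $w$ bounds $\|\chi(h')\|$ by a multiple of $e^{\alpha\, d_\h(i,z)}\,e^{c\, d_\h(z,h.z)}$, while the second display applied to $k$ and $k^{-1}$ gives $\|\chi(k^{\pm 1})\|\ll e^{2c\,d_\h(i,z)}$. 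Combining these with $\|\chi(h)\|\le C^2\|\chi(k)\|\,\|\chi(k^{-1})\|\,\|\chi(h')\|$ yields
\[
 \|\chi(h)\|\le d(z)\,e^{c\cdot d_\h(z,h.z)},
\]
where $d(z)$ is a constant multiple of $\exp\!\big((\alpha+4c)\,d_\h(i,z)\big)$, hence continuous on $\h$.

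The main (mild) technical point is to verify that the proof of Proposition~\ref{est_trace} does extend unchanged to arbitrary elements of $\Gamma$ and to arbitrary base points in $\overline{D}$. This is essentially routine: the cusp-geometry estimate $m_j\ll e^{4 t_j}$ depends only on the horoball geometry at the parabolic fixed point $b_j$ and on the time the geodesic segment spends in the associated cusp region, neither of which is affected by the dynamical type of $g$ or by the position of $z$ in $\overline{D}$.
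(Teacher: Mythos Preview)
Your approach is correct in spirit and uses the same core ingredients as the paper (the block decomposition, Lemma~\ref{jordanblock}, the cusp-time estimate $m_j\ll e^{4t_j}$, and the Eichler--Knopp--Sheingorn bound), but you take an unnecessary detour. Once you have your ``second display'' $\|\chi(h)\|\ll e^{c\,d_\h(i,h.i)}$ for hyperbolic $h$, the triangle inequality $d_\h(i,h.i)\le 2\,d_\h(i,z)+d_\h(z,h.z)$ holds for \emph{every} $z\in\h$, not only for $z\in\overline D$; this immediately yields $\|\chi(h)\|\le d(z)\,e^{c\,d_\h(z,h.z)}$ with $d(z)$ a constant multiple of $e^{2c\,d_\h(i,z)}$. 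That is exactly the paper's argument, and it is all that is needed.

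Your conjugation step (choosing $k$ with $z\in k.\overline D$ and writing $h=kh'k^{-1}$) is therefore superfluous, and it introduces two small extra burdens. First, you apply the ``second display'' to $k^{\pm1}$, which need not be hyperbolic; the Eichler--Knopp--Sheingorn bound~\eqref{bound_block} is stated in the paper only for hyperbolic elements, so this would require separate justification. Second, your inequality $d_\h(k.i,z)\le d_\h(i,z)$ silently assumes that $D$ is the Dirichlet domain \emph{centered at} $i$; this is harmless but should be said. None of this is fatal, but the paper's route avoids all of it: run the block-decomposition argument once at the fixed base point $i$ (no need to vary the base point over $\overline D$, no need to handle non-hyperbolic elements), then use the triangle inequality.
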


\begin{proof}
We use the notation from the proof of Proposition~\ref{est_trace}. Without loss 
of generality we suppose that the fundamental domain is chosen such that $i\in 
D$ (we could pick any other generic point $z_0$ and arrange $D$ around $z_0$). 

Let $h\in\Gamma$ be an arbitrary hyperbolic element. We do not suppose that 
$\gamma_h$ intersects the compact core $D_0$. We suppose that $i$ is the point 
to deduce the block decomposition of $h$ as in Section~\ref{block_bound}. In the 
unlikely case that the geodesic connecting $i$ and $h.i$ passes through a vertex 
of a $\Gamma$-translate of $D$, we use a slight perturbation of $i$. Any compact 
perturbation does not change the nature of the results.   

We write $h$ in its block decomposition and proceed to estimate $\|\chi(h)\|$ as 
in the proof of Proposition~\ref{est_trace} until \eqref{est_parab}. In order to 
estimate the $m_j$'s we let $c_h$ be the geodesic segment connecting $i$ and 
$h.i$. Note that we do not request that $i$ is in $\gamma_h(\R)$. This is a 
major difference to the proof of Proposition~\ref{est_trace}. Then 
\eqref{splitpoint} becomes
\[
 d_\h(i,h.i)=\ell(c_h).
\]
We continue to proceed as in the proof of Proposition~\ref{est_trace} until 
\eqref{est_m}, which now reads
\[
 \prod_{j\in J_p} m_j^{d_0-1} \ll e^{4(d_0-1)d_\h(i,h.i)}.
\]
It follows that 
\[
 \|\chi(h)\| \ll e^{c\cdot d_\h(i,h.i)},
\]
where 
\[
 c = c_1 + 4(d_0-1).
\]
Let $z\in\h$. Using \eqref{triangle} shows
\[
 \| \chi(h)\| \ll e^{2d_\h(i,z)} e^{c\cdot d_\h(z,h.z)}. 
\]
This completes the proof.
\end{proof}

\section{Meromorphic continuation of twisted Selberg zeta 
functions}\label{sec:meromorphic}

Throughout this section let $\Gamma$ be a geometrically finite Fuchsian group 
and let $\chi\colon\Gamma\to\GL(V)$ be a finite-dimensional representation with 
non-expanding cusp monodromy. In this section we will show that under the assumption 
that $\Gamma$ admits a so-called strict transfer operator approach (defined in 
Section~\ref{def_stoa} below), the Selberg zeta function $Z_{\Gamma,\chi}$ 
admits a meromorphic continuation to all of $\C$. More precisely, we will show the 
stronger statement that $Z_{\Gamma,\chi}$ equals the Fredholm determinant of a 
family of twisted transfer operators, and that this family admits a meromorphic 
continuation to all of $\C$. In addition, we will provide upper bounds on the order 
of poles in the meromorphic continuation. For precise statements we refer to 
Theorem~\ref{mainthm:strong} below.

In order to define the notion of a strict transfer operator approach we need a 
few preparations.

\subsection{Geometry}\label{sec:geometry}
We consider the Riemann sphere $\wh\C = \C\cup\{\infty\}$ as a complex manifold 
with the standard complex structure which is determined by two charts, the first 
of which is given by the identity on $\C$, and the second is 
\[
\wh\C\smallsetminus\{0\} \to \C,\quad 
z\mapsto 
\begin{cases}
0 & \text{if $z=\infty$}
\\
\frac1z & \text{otherwise.}
\end{cases}
\]
We call a subset of $\wh\C$ \textit{bounded} if its image in at least one of 
these charts is bounded as a subset of $\C$ (endowed with the euclidean metric). 
Note that this notion of boundedness does not coincide with boundedness with 
respect to the chordal metric on $\wh\C$. Note further that a bounded subset of $\wh\C$ is in particular contained in the domain of a chart, and hence $\wh\C$ itself is not a bounded set.

Likewise we consider $\wh\R = \R\cup\{\infty\}$ as a real manifold whose 
manifold structure is determined by the restriction of the two charts for 
$\wh\C$ to $\wh\R$. We call a subset of $\wh\R$ an \textit{interval} if its 
image in at least one of these charts is an interval in $\R$. According to this definition, the empty set and singletons are intervals. The whole manifold $\wh\R$ itself however is not considered to be an interval because it is not contained in the domain of a single chart map.

Let $\wh{\mc V}_f$ denote the set of geodesics on $X\coloneqq \Gamma\backslash \h$ 
which converge to a cusp or funnel in forward time direction. In other words, 
$\wh{\mc V}_f$ is the set of geodesics which, in forward time direction, 
eventually leave any compact set forever. Let $\mc V_f$ denote the (maximal) set 
of geodesics on $\h$ which represent the geodesics in $\wh{\mc V}_f$, and let 
\[
 \bd \coloneqq \bd(\Gamma)\coloneqq \{\gamma(\infty) \mid \gamma\in \mc V_f\} \quad 
\]
denote the `boundary part' of $X$ of the geodesic boundary $\wh\R$ of $\h$. 

We provide a more geometrical description of the set $\bd = \bd(\Gamma)$: 
Suppose that $\mc F$ is a fundamental domain for $\Gamma$ in $\h$, e.\,g., a 
Dirichlet fundamental domain, and consider the closure $\overline{\mc F}$ of 
$\mc F$ in the geodesic closure 
\[
\overline{\h} = \h \cup \wh\R 
\]
of $\h$ (which is identical to the closure of $\h$ in $\wh\C$). Then 
\[
 \bd = \Gamma.\big( \overline{\mc F} \cap \wh\R\big).
\]

For any subset $I$ of $\wh\R$ we set
\[
 I_\st \coloneqq I\smallsetminus\bd.
\]

The subscript `$\st$' abbreviates `strong'. It alludes to the notion of a 
`strong cross section' for the geodesic flow on $\Gamma\backslash\h$. Such cross 
sections are a good source for tuples that satisfy the notion of a 
`strict transfer operator approach' which we will define in the next section (cf.\@~\cite{Pohl_Symdyn2d}).

\subsection{Strict transfer operator approaches}\label{def_stoa}
We say that $\Gamma$ admits a \textit{strict transfer operator approach} if 
there exists a tuple
\begin{equation}\label{tuple}
 \structure\coloneqq\big( A, (I_a)_{a\in A}, (P_{a,b})_{a,b\in A}, (C_{a,b})_{a,b\in 
A}, ((g_p)_{p\in P_{a,b}})_{a,b\in A}\big)
\end{equation}
consisting of
\begin{itemize}
 \item a finite (index) set $A$, 
 \item a family $(I_a)_{a\in A}$ of (not necessarily disjoint) intervals in 
$\wh\R$,
 \item for all $a,b\in A$ a finite (possibly empty) set $P_{a,b}$ of parabolic 
elements in $\Gamma$ and a finite (possibly empty) subset $C_{a,b}$ of $\Gamma$, and
\item for all $a,b\in A$ and all $p\in P_{a,b}$ an element $g_p\in \Gamma$ (which may be the identity)
\end{itemize}
which satisfies Properties~\ref{P1}-\ref{P5} below.

\begin{property}\label{P1}\mbox{ }
\vspace*{-1mm}
\begin{enumerate}[{\rm (i)}]
\item For all $a,b\in A$, all $p\in P_{a,b}$ and $n\in\N$ we have 
$p^{-n}g_p^{-1}.I_{a,\st} \subseteq I_{b,\st}$. Further, $p^n\notin P_{a,b}$ for all $n\geq 2$.
\item For all $a,b\in A$ and all $g\in C_{a,b}$ we have 
$g^{-1}.I_{a,\st}\subseteq I_{b,\st}$.
\item For all $b\in A$ the sets in the family
\[
 \big\{ g^{-1}.I_{a,\st} \ \big\vert\ a\in A,\ g\in C_{a,b}\big\} \cup \big\{ 
p^{-n}g_p^{-1}.I_{a,\st} \ \big\vert\ a\in A,\ p\in P_{a,b},\ n\in\N \big\}
\]
are pairwise disjoint, and
\begin{align*}
 I_{b,\st} &= \bigcup \left\{ g^{-1}.I_{a,\st}\ \left\vert\ a\in A,\ g\in 
C_{a,b}\vphantom{g^{-1}.I_{b,\st} }\right.\right\} 
 \\
 & \ \quad \cup \bigcup \left\{ p^{-n}g_p^{-1}.I_{a,\st}\ \left\vert\ a\in A,\ p\in 
P_{a,b},\ n\in\N \vphantom{p^{-n}.I_{b,\st} }\right.\right\}.
\end{align*}
\end{enumerate}
\end{property}

Property~\ref{P1} induces a discrete dynamical system $(D_\st, F)$, where 
\begin{equation}\label{def_Dst}
 D_\st \coloneqq \bigcup_{a\in A} I_{a,\st}\times \{a\},
\end{equation}
and $F$ is determined by the submaps
\begin{equation}\label{Fmap1}
 g^{-1}.I_{a,\st}\times\{b\} \to I_{a,\st}\times\{a\},\quad (x,b) \mapsto 
(g.x,a)
\end{equation}
and
\begin{equation}\label{Fmap2}
 p^{-n}g_p^{-1}.I_{a,\st}\times\{b\} \to I_{a,\st}\times\{a\},\quad (x,b)\mapsto 
(g_pp^n.x,a)
\end{equation}
for $a,b\in A$, $g\in C_{a,b}$, $p\in P_{a,b}$ and $n\in\N$.

For $n\in\N$ let $\Per_n$ (`periodic') denote the subset of $\Gamma$ which consists of all 
$g\in\Gamma$ for which there exists $a\in A$ such that 
\begin{equation}\label{defPn}  
g^{-1}.I_{a,\st}\times \{a\} \to I_{a,\st}\times\{a\}, \quad (x,a)\mapsto 
(g.x,a)                                                                          
\end{equation}
is a submap of $F^n$.

\begin{property}\label{P2}
The union 
\[
 \Per \coloneqq \bigcup_{n\in\N} \Per_n
\]
is disjoint.
\end{property}

For $n\in\N$ and $h\in \Per_n$ we say that $w(h)\coloneqq n$ is the \textit{word 
length} of $h$. Property~\ref{P2} yields that the word length is well-defined. 

Recall that $[\Gamma]_{\hyp}$ denotes the set of $\Gamma$-conjugacy classes of 
hyperbolic elements in $\Gamma$.

\begin{property}\label{P3}\mbox{ }
\vspace*{-1mm}
\begin{enumerate}[{\rm (i)}]
\item All elements of $\Per$ are hyperbolic.
\item If $h\in \Per$, then its primitive $h_0$ is also contained in $\Per$.
\item For each $[g] \in [\Gamma]_{\hyp}$ there exists a unique $n\in\N$ such 
that there exists $h\in \Per_n$ such that $h$ represents $[g]$. 
\end{enumerate}
\end{property}

Suppose that $[g] \in [\Gamma]_{\hyp}$ is represented by $h$ for some $h\in 
\Per_n$ for some $n\in\N$. Let $m\coloneqq m(h)\in\N$ denote the unique number such that $h=h_0^{m}$ for 
a primitive hyperbolic element $h_0\in\Gamma$, and let $p(h) \coloneqq n/m$. Then we 
set $w(g) \coloneqq w(h)$, $m(g)\coloneqq m(h)$ and $p(g)\coloneqq p(h)$. By 
Property~\ref{P3} these values are well-defined.

\begin{property}\label{P4}
For $[g]\in [\Gamma]_\hyp$, there are exactly $p(g)$ distinct elements $h\in 
\Per_{w(g)}$ such that $h$ represents $[g]$.
\end{property}

\begin{property}\label{P5}
There exists a family $(\mc E_a)_{a\in A}$ of open, bounded, connected, simply 
connected sets in $\wh\C$ such that 
\begin{enumerate}[{\rm (i)}]
\item for all $a\in A$ we have
\[
\overline{I}_{a,\st} \subseteq \mc E_a,
\]
\item\label{P5ii} there exists $\xi\in\PSL_2(\R)$ such that for all $a\in A$ we 
have $\xi.\overline{\mc E}_a \subseteq \C$, and for all $b\in A$ and all $g\in 
C_{a,b}$ we have
\[
g\xi^{-1}.\infty\notin \overline{\mc E}_a, 
\]
\item\label{P5iii} for all $a,b\in A$ and all $g\in C_{a,b}$ we have 
\[
g^{-1}.\overline{\mc E}_a \subseteq \mc E_b,
\]
\item\label{P5iv} for all $a,b\in A$, all $p\in P_{a,b}$ there exists a compact 
subset $K_{a,b,p}$ of $\wh\C$ such that for all $n\in\N$ we have 
\[
p^{-n}g_p^{-1}.\overline{\mc E}_a \subseteq K_{a,b,p} \subseteq \mc E_b,
\]
\item\label{no_fp} for all $a,b\in A$ and all $p\in P_{a,b}$, the set 
$g_p^{-1}.\overline{\mc E}_a$ does not contain the fixed point of $p$.
\end{enumerate}
\end{property}

If the tuple $\structure$ in \eqref{tuple} satisfies 
Properties~\ref{P1}-\ref{P5}, then we call it a \textit{structure tuple} for a 
strict transfer operator approach. The existence of the element $\xi$ in 
Property~\ref{P5}\eqref{P5ii} allows us to find a chart in the manifold atlas of $\wh \C$  which 
contains all the sets $\overline{\mc E}_a$, $a\in A$. Without loss of generality 
we assume throughout that $\xi=\id$. Moreover, to avoid some technical discussions, we assume without loss of generality that for all $a\in A$, 
\[
 \Rea \overline{\mc E}_a \subseteq \overline{\mc E}_a.
\]

As mentioned in the Introduction, the number of geometrically finite Fuchsian 
groups for which strict transfer operator approaches are established is 
increasing \cite{Fried_zetafunctionsI, Artin, Series, Bowen, Bowen_Series, 
Fried_triangle,  Patterson_Perry, Chang_Mayer_extension, 
Manin_Marcolli_transfer, Pohl_Symdyn2d, Moeller_Pohl, Pohl_hecke_infinite, 
Pohl_representation}. The hyperbolic funnel groups are (as Fuchsian Schottky 
groups) among these Fuchsian groups. It is most likely that at least all 
Fuchsian groups with cusps admit such strict transfer operator approaches, and 
that our approach can be extended to non-strict transfer operator approaches 
which then will allow us to cover an even larger class of Fuchsian groups, presumably 
all geometrically finite ones. In Example~\ref{ex:Mayer} below we will provide an explicit example for a strict transfer operator approach for the modular group $\PSL_2(\Z)$. 

For all Fuchsian groups already considered, a strict transfer operator approach can be found for which each of the elements $g_p$ in \eqref{tuple} is the identity. We here allow arbitrary $g_p\in\Gamma$ in order to keep the setting more flexible.

\subsection{Transfer operators and Banach spaces}\label{sec:tos} From now on we 
suppose that $\Gamma$ admits a strict transfer operator approach. Let 
\[
\structure\coloneqq\big( A, (I_a)_{a\in A}, (P_{a,b})_{a,b\in A}, (C_{a,b})_{a,b\in 
A}, ((g_p)_{p\in P_{a,b}})_{a,b\in A} \big)
\]
be a structure tuple, and let $(D_\st, F)$ denote the induced discrete dynamical 
system (see \eqref{def_Dst}-\eqref{Fmap2}). 

As mentioned above, in order to prove the meromorphic continuability of twisted 
Selberg zeta functions, we will use transfer operators to represent these zeta 
functions. In this section we will provide the transfer operators associated to the 
structure tuple and the discrete dynamical system.

For the definition of these transfer operators we will proceed in a two-step process. In the first step we will define 
these operators only formally, considering them as (formal) operators on the space of functions $D_\st\to V$. This way, we have a clear motivation for the defining expressions of the transfer operators. One immediately sees that these operators are not well-defined on the space of all functions $D_\st\to V$ due to convergence problems. One also sees that there is certain freedom for the choice of the function spaces on which the transfer operators act as actual operators. Therefore, in the second step we will define the Banach spaces which we will use as domains for the transfer operators and will show that (and in which way) the transfer operators constitute well-defined operators on these Banach spaces. See Theorem~\ref{mainthm:strong} below.

Let 
\[
 \Fct(D_\st;V) \coloneqq \{ \text{$f\colon D_\st\to V$ function} \}
\]
denote the space of all $V$-valued functions on $D_\st$. Throughout we fix a norm $\|\cdot\|$ on $V$ and an essentially 
sub-multiplicative norm $\|\cdot\|$ on $\GL(V)$. For $s\in\C$, subsets 
$U\subseteq\wh\C$, functions $\psi\colon U\to V$, any $g\in\Gamma$ and any 
$z\in\C$ we set
\[
 \alpha_s\big(g^{-1}\big)\psi(z)\coloneqq \big(g'(z)\big)^s \chi\big(g^{-1}\big) 
\psi(g.z),
\]
whenever this definition makes sense. Throughout we use the standard logarithm (i.\,e., holomorphic on the slit plane $\C\smallsetminus(-\infty,0]$) for the definition of complex powers.

The transfer operator with parameter $s\in\C$ associated to the structure tuple~$\structure$ is formally the operator 
\begin{equation}\label{eq:formalTO}
 \TO_s\colon \Fct(D_\st;V) \to \Fct(D_\st;V)
\end{equation}
given by 
\[
 \big(\TO_s f\big)(z) \coloneqq \sum_{w\in F^{-1}(z)} |F'(w)|^{-s} f(w)
\]
for all $f\in\Fct(D_\st;V)$, $z\in D_\st$. We note that the space $D_\st$ is dense in itself, and hence the derivatives are well-defined. 

We identify $\Fct(D_\st;V)$ with the space
\[
 \left\{ (f_a)_{a\in A}  \ \left\vert\   \text{$f_a\colon I_{a,\st}\to V$ for all $a\in A$} \vphantom{(f_a)_{a\in A}}\right.\right\}
\]
by mapping $f\in \Fct(D_\st;V)$ to the function vector $(f_a)_{a\in A}$ determined by
\[
 \forall\,a\in A\ \forall\, x\in I_{a,\st}\colon f_a(x) = f(x,a).
\]
With respect to this identification, the transfer operator $\TO_s$ is (formally)
given by
\[
 \TO_s \coloneqq \big( \TO_{s,a,b}\big)_{a,b\in A}
\]
where
\begin{equation}\label{TOpart}
 \TO_{s,a,b} \coloneqq \sum_{g\in C_{a,b}} \alpha_s(g) + \sum_{p\in 
P_{a,b}}\sum_{n\in\N} \alpha_s(g_pp^n).
\end{equation}
Thus, if $f\in\Fct(D_\st;V)$ and $\tilde f=\TO_sf$ and $f$ is identified with $(f_a)_{a\in A}$ and $\tilde f$ is identified with $(\tilde f_a)_{a\in A}$, then
\[
 \tilde f_a = \sum_{b\in A} \TO_{s,a,b} f_b
\]
for all $a\in A$. To find the specific expression~\eqref{TOpart} of the `matrix coefficients of the transfer operator' we used the specific form of the submaps of $F$ from \eqref{Fmap1} and \eqref{Fmap2}.

With domain and range as stated in \eqref{eq:formalTO}, the transfer operator $\TO_s$ is not well-defined as the involved infinite sums do not converge for all elements in $\Fct(D_\st;V)$. We now provide a domain on which $\TO_s$ constitutes a well-defined operator (at least for a certain range of $s$; see Theorem~\ref{mainthm:strong} below for the proof). To that end we set 
\begin{align*}
 \mc B(\mc D;V) &\coloneqq \{\text{$\varphi\colon\overline{\mc D} \to V$ continuous, 
$\varphi\vert_{\mc D}$ holomorphic}\}
\end{align*}
for any open subset $\mc D\subseteq \C$. We endow $\mc B(\mc D;V)$ with the supremum norm and its norm topology, which 
turns $\mc B(\mc D;V)$ into a Banach space.

For any family $\mc E_A = (\mc E_a)_{a\in A}$ as provided by Property~\ref{P5} 
we set
\[
 \mc B(\mc E_A;V)\coloneqq \bigoplus_{a\in A} \mc B(\mc E_a;V)
\]
and endow it with the structure of a Banach space via the product space 
structure. 

The results in this paper do not depend on the choice of the family $\mc E_A$. This family serves to provide a thickening of the intervals $\overline I_a$, $a\in A$, into the complex plane such that the relations, in particular the mapping properties, between the intervals are preserved and the operators in \eqref{TOpart} are indeed well-defined when acting on $\mc B(\mc E_A;V)$. To indicate that the essential part of the domain of definitions for the function vectors $(f_a)_{a\in A}$ is the family of intervals $I=(I_a)_{a\in A}$ rather than their thickening, we use the notation 
\begin{equation}\label{def_projBS}
\mc B(I;V)\coloneqq\mc B(\mc E_A;V)
\end{equation}
for this Banach space with the understanding that we may choose any family $\mc E_A$ satisfying Property~\ref{P5} (the same family for all appearances of $\mc B(I;V)$).

For the convenience of the reader we provide an explicit example of a strict transfer operator approach.

\begin{example}\label{ex:Mayer}
Mayer \cite{Mayer_thermo, Mayer_thermoPSL} conducted the seminal transfer operator approach to a Selberg zeta function, namely for the modular surface $\PSL_2(\Z)\backslash\h$. The discrete dynamical system his transfer operator is associated to is the Gauss map, which is intimately related to continued fraction expansions and a discretization of the billiard flow with Neumann boundary value conditions on the triangle surface $\PGL_2(\Z)\backslash\h$. This discretization essentially provides a strict transfer operator approach. However in this setting the discrete group is $\PGL_2(\Z)$, which is not Fuchsian. Thus, strictly speaking, Mayer's transfer operator is not covered by our construction. In turn, this example shows that our construction easily  generalizes to discrete subgroups of $\PGL_2(\R)$ that are not Fuchsian. To avoid additional technicalities we decided to omit this generalization in this article.

Nevertheless, the Gauss map is also closely related to a discretization of the geodesic flow on the modular surface $\PSL_2(\Z)\backslash\h$. The arising discrete dynamical system 
and the associated transfer operators have already been discussed in detail in \cite{Pohl_Symdyn2d, Moeller_Pohl}, for which reason we are here rather brief and only show that the underlying system is a strict transfer operator approach.  We use the notation from the previous sections.

Let $\Gamma = \PSL_2(\Z)$. The boundary part of $\PSL_2(\Z)\backslash\h$ is 
\[
 \bd = \bd(\Gamma)= \Q.
\]
For the underlying system we let the set $A$ consist of two symbols, say
\[
 A\coloneqq \{a,b\}
\]
with $a\not=b$. The intervals are
\[
 I_a \coloneqq (0,1)\quad\text{and}\quad I_b\coloneqq (1,\infty).
\]
The sets of parabolic elements are 
\[
P_{a,a} \coloneqq P_{b,b}\coloneqq \emptyset, \quad P_{a,b} \coloneqq \{p_1\}, \quad P_{b,a} \coloneqq \{p_2\}
\]
with
\[
p_1 := \bmat{1}{-1}{0}{1}, \quad p_2 := \bmat{1}{0}{-1}{1}. 
\]
For all $p\in P_{a,b}\cup P_{b,a}$ we set $g_p\coloneqq\id$. For all $n\in\N$ we have
\begin{align*}
p_1^{-n}.I_{a, \st}  &= (n,n+1)_\st \quad \subseteq I_{b,\st}, \\
p_2^{-n}.I_{b, \st} &= \left( \frac{1}{n+1}, \frac{1}{n}\right)_\st \quad\subseteq I_{a,\st}.
\end{align*}
We refer to \cite{Moeller_Pohl} for the proof that Properties~\ref{P1}-\ref{P5} are satisfied.

The induced discrete dynamical system is 
\begin{align*}
p_1^{-n}.I_{a,\st} \times \{b\} &\to I_{a,\st} \times \{a\},\\
 (x,b) &\mapsto (p_1^n.x, a) = ( x-n, a),\\
p_2^{-n}.I_{b,\st} \times \{ a \} & \to I_{b, \st} \times \{ b\},\\
(x,a) &\mapsto (p_2^n.x, b) = \left(\frac{x}{-nx+1},b\right).
\end{align*}
The transfer operator is
\begin{equation}\label{modular_TO}
\TO_s = \mat{0}{\sum\limits_{n \in \N} \alpha_s(p_1^n)}{
\sum\limits_{n \in \N} \alpha_s(p_2^n)}{0}
\end{equation}
acting on functions (with sufficient regularity) of the form
\[
f = \begin{pmatrix} 
f_a\colon I_{a,\st} \to V \\
f_b\colon I_{b, \st} \to V 
\end{pmatrix}.
\]
The relation of the transfer operator $\TO_s$ to Mayer's transfer operator $\TO_s^M$ is given by diagonalization, and taking the representation $\chi$ to be trivial and one-dimensional. Changing the basis of the function space shows that $\TO_s$ is conjugate to
\[
 \mat{\TO_s^M}{0}{0}{-\TO_s^M}.
\]
Geometrically, this conjugation means that the geodesic flow on $\PSL_2(\Z)\backslash\h$ is split into two copies of the billiard flow on $\PGL_2(\Z)\backslash\h$, one with Neumann boundary value conditions and one with Dirichlet boundary value conditions. We refer to \cite{Pohl_spectral_hecke} for a more detailed discussion.

\end{example}

\subsection{Statement of Main Theorem for meromorphic continuability (strong 
version)}\label{sec:statement}

With the preparations from Sections~\ref{sec:geometry}-\ref{sec:tos} we are now 
able to state the strong version of the main theorem regarding the meromorphic 
continuability of twisted Selberg zeta functions.

For any Banach space $\mc B$, we let 
\[
 \Op(\mc B)\coloneqq \{ \text{$L\colon \mc B\to \mc B$ linear, bounded}\}
\]
denote the bounded linear operators on $\mc B$.

\begin{thm}\label{mainthm:strong}
Let $\Gamma$ be a geometrically finite Fuchsian group which admits a strict 
transfer operator approach, and let $\chi\colon \Gamma\to \GL(V)$ be a 
finite-dimensional representation of $\Gamma$ with non-expanding cusp monodromy. 
Let 
\[
 \mc S \coloneqq \big( A, (I_a)_{a\in A}, (P_{a,b})_{a,b\in A}, (C_{a,b})_{a,b\in A}, ((g_p)_{p\in P_{a,b}})_{a,b\in A} 
\big)
\]
be a structure tuple as in \eqref{tuple}, and let $\mc B(I;V)$ denote the 
associated Banach space as in \eqref{def_projBS}. For $s\in\C$ let $\TO_s$ 
denote the associated (formal) transfer operator. Further, for $p\in\Gamma$ 
parabolic let $\jc(p)$ denote the family (with multiplicities) of the lengths of 
Jordan chains for eigenvalue $1$ of $\chi(p)$, and let 
\[
 d_1 \coloneqq \max\{ \jc(p) \mid \text{$p\in\Gamma$ parabolic} \}
\]
denote the maximal length of a Jordan chain ranging over all $\chi(p)$, $p\in\Gamma$ 
parabolic, with eigenvalue $1$. Further, let $d_0$ denote the 
maximal length of a Jordan chain ranging over all $\chi(p)$, $p\in\Gamma$ 
parabolic, and all eigenvalues. 

Then the following properties hold:
\begin{enumerate}[{\rm (i)}]
\item\label{strongi} For $\Rea s > \frac{d_0}{2}$, the (formal) transfer 
operator $\TO_s$ defines an (actual) operator on $\mc B(I;V)$. It is bounded and nuclear of 
order $0$.
\item\label{strongii} The map 
\[
 \left\{ s\in\C \ \left\vert\  \Rea s > \tfrac{d_0}2\right.\right\} \to 
\Op\big(\mc B(I;V)\big),\quad s\mapsto \TO_s
\]
extends meromorphically to all of $\C$ with values in nuclear operators of order 
$0$ in $\mc B(I;V)$. All poles are simple and of the form $\tfrac12(d_1-k)$ with 
$k\in\N_0$. For each pole $s_0$, there exists a neighborhood $U$ of $s_0$ such 
that the meromorphic continuation $\wt\TO_s$ is of the form
\[
 \wt\TO_s = \frac{1}{s-s_0}\mc A_s + \mc B_s,
\]
where the operators $\mc A_s$ and $\mc B_s$ are holomorphic on $U$, and $\mc 
A_s$ is of rank at most 
\[
 \sum_{a,b\in A} \sum_{p\in P_{a,b}} \sum_{d\in \jc(p)} {d+1 \choose 2}.
\]
\item\label{strongiii} For $\Rea s \gg 1$, $Z_{\Gamma,\chi}(s) = \det(1-\TO_s)$.
\item\label{strongiv} The Selberg zeta function $Z_{\Gamma,\chi}$ extends 
meromorphically to all of $\C$. Its poles are contained in $\tfrac12(d_1-\N_0)$. 
The order of any pole is at most
\[
 \sum_{a,b\in A} \sum_{p\in P_{a,b}} \sum_{d \in \jc(p)} {d+1 \choose 2}.
\]
\end{enumerate}
\end{thm}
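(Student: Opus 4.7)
The plan is to prove parts~\eqref{strongi}--\eqref{strongiv} of Theorem~\ref{mainthm:strong} sequentially, building from a careful analysis of the transfer operator~\eqref{TOpart} on $\mc B(I;V)$ to the meromorphic continuation of $Z_{\Gamma,\chi}$ via a Fredholm determinant identity. For part~\eqref{strongi}, I would split each block $\TO_{s,a,b}$ into its hyperbolic (finite) piece $\sum_{g\in C_{b,a}}\alpha_s(g)$ and its parabolic (infinite) piece $\sum_{p\in P_{b,a}}\sum_{n\in\N}\alpha_s(p^n)$. By Property~\ref{P5}(iii) each hyperbolic summand is a weighted composition operator through the strict compact containment $g^{-1}.\overline{\mc E}_b\subseteq\mc E_a$, hence nuclear of order~$0$ in the Grothendieck sense for every $s\in\C$. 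For the parabolic summands, Property~\ref{P5}(iv) provides a common compact $K_{b,a,p}\subseteq\mc E_a$ containing every $p^{-n}.\overline{\mc E}_b$, so each $\alpha_s(p^n)$ is again nuclear of order~$0$. Two estimates control the tail: uniformly on $\overline{\mc E}_b$ (which avoids the fixed point $c_p$ by Property~\ref{P5}(v)) the Möbius form of iterated parabolics gives $|(p^{-n})'(z)|\ll n^{-2}$, and Lemma~\ref{jordanblock} gives $\|\chi(p^n)\|\ll n^{d_0-1}$. The resulting bound $O(n^{d_0-1-2\Rea s})$ on the nuclear norms of the summands is summable precisely for $\Rea s > d_0/2$.

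For part~\eqref{strongii} the hyperbolic piece is already entire, so the work is the meromorphic continuation of $\sum_n\alpha_s(p^n)$. In a chart on $\wh\C$ moving $c_p$ to a finite point, $p^{-n}$ becomes an explicit Möbius transformation, so $((p^{-n})'(z))^s = (\alpha(z)+n\beta(z))^{-2s}$ with $\beta\neq 0$ on $\overline{\mc E}_b$ and $q_n(z)\sceq p^{-n}.z$ converges to $c_p$ at rate $O(1/n)$. Put $\chi(p)$ into Jordan form; on a Jordan chain of length $d$ for an eigenvalue $\lambda$ (necessarily $|\lambda|=1$, by the non-expanding cusp monodromy assumption) write $\chi(p^n)=\lambda^n\sum_{k=0}^{d-1}\binom{n}{k}M^k$ for the associated nilpotent $M$. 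Substituting this and Taylor-expanding $\psi(q_n(z))$ around $c_p$ reduces each contribution to finite linear combinations of Lerch transcendents $\Phi(\lambda,2s-j+\ell,a(z))$ over bounded ranges of $j$ and $\ell$. For $\lambda\neq 1$ these are entire in $s$; for $\lambda=1$ each produces a simple pole lying in $\tfrac12(d_0-\N_0)$. Careful bookkeeping of the $\binom{n}{k}$ weights and Taylor coefficients shows the principal part at every pole is finite-rank, with total rank $\binom{d+1}{2}$ per unit-eigenvalue Jordan chain of length $d$; summing over $a,b,p$, and $\jc(p)$ yields the stated estimate.

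For part~\eqref{strongiii} I would apply the logarithmic expansion
\[
\log\det(1-\TO_s) = -\sum_{n\geq 1}\tfrac{1}{n}\tr\TO_s^n,
\]
valid on a right half-plane by the nuclearity from~\eqref{strongi}. Each trace $\tr\TO_s^n$ is a sum of weighted evaluations at the fixed points of $F^n$, which are by~\eqref{defPn} exactly the elements of $P_n$. By Properties~\ref{P3} and~\ref{P4} these elements parametrize $[\Gamma]_{\hyp}$ with multiplicities that match~\eqref{pre_def}, and geometric summation over~$n$ reproduces~\eqref{logszf} and hence the identity $\det(1-\TO_s)=Z_{\Gamma,\chi}(s)$. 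Part~\eqref{strongiv} then follows by transferring the meromorphic continuation from part~\eqref{strongii}: at a simple pole $s_0$ of $\TO_s$ with principal part $\mc A_{s_0}/(s-s_0)$, the Gohberg--Sigal principle for Fredholm determinants of meromorphic operator families bounds the order of the induced pole of $\det(1-\TO_s)$ by $\rank\mc A_{s_0}$, giving exactly the stated pole set and order estimate for $Z_{\Gamma,\chi}$.

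The hard step is part~\eqref{strongii}: extending the Lerch-based argument of~\cite{Pohl_representation}, which handled the unitary (hence diagonalizable-at-parabolics) case, to the general non-expanding setting in which $\chi(p)$ may carry nontrivial Jordan blocks. The threefold difficulty is to reorganize $\chi(p^n)=\lambda^n\sum_k\binom{n}{k}M^k$ combined with the Taylor expansion of the Banach-space argument into an expression that genuinely reduces to derivatives or shifts of Lerch transcendents, to verify that no spurious poles appear outside $\tfrac12(d_0-\N_0)$, and to track the rank of the principal-part operators tightly enough to obtain the precise bound $\binom{d+1}{2}$ per Jordan chain.
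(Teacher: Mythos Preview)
Your proposal is correct and follows essentially the same route as the paper: part~\eqref{strongi} via Lemma~\ref{jordanblock} and the explicit parabolic derivative estimate to get the $O(n^{d_0-1-2\Rea s})$ tail bound; part~\eqref{strongii} via Jordan normal form of $\chi(p)$, Taylor expansion at the parabolic fixed point, and reduction to (derivatives of) the Lerch transcendent; part~\eqref{strongiii} via the trace expansion and Properties~\ref{P3}--\ref{P4}; and part~\eqref{strongiv} by pushing the meromorphic structure of $s\mapsto\TO_s$ through the Fredholm determinant. One small correction: in part~\eqref{strongii} the $\binom{n}{k}$ weights coming from the nilpotent part of the Jordan block do not reduce to the Lerch transcendent $\Phi$ itself but to its $\lambda$-derivatives $\Phi_m(s,\lambda,w)=\tfrac{1}{m!}\partial_\lambda^m\Phi(s,\lambda,w)$, which for $\lambda=1$ carry simple poles at $s=1,\ldots,m+1$; you acknowledge this in your final paragraph, but the displayed formula $\Phi(\lambda,2s-j+\ell,a(z))$ should be $\Phi_m(2s+k,\lambda,a(z))$.
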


For unitary representations $\chi$, the proof of Theorem~\ref{mainthm:strong} is 
by now standard \cite{Ruelle_zeta, Fried_zetafunctionsI, Mayer_thermo, 
Pohl_representation}. Thus, our main emphasis is on discussing the necessary 
changes and extensions of the standard proof in order to accommodate arbitrary 
representations with non-expanding cusp monodromy. 

The proof of Theorem~\ref{mainthm:strong} is split into several parts: In 
Section~\ref{sec:nuclear} below we will show 
Theorem~\ref{mainthm:strong}\eqref{strongi}. Compared to the standard proof the 
essential part to show for Theorem~\ref{mainthm:strong}\eqref{strongi} is the 
convergence of the infinite sums featuring in the transfer operator, see 
\eqref{TOpart}. Every such infinite sum is governed by a parabolic element. 
Hence, \textit{a priori}, the growth of $\chi(p^n)$ as $n\to\infty$ might cause 
divergence. However, the logarithm-type law Lemma~\ref{jordanblock} gives us 
sufficient control of this growth in order to establish convergence.

In Section~\ref{sec:fredholm} below we will prove 
Theorem~\ref{mainthm:strong}\eqref{strongiii}. The key result for this step is an easy-to-evaluate
formula for the trace of $\alpha_s(h)$, $h\in\Gamma$ hyperbolic, which we will
provide in Lemma~\ref{lem:formulatrace} below.

In Section~\ref{sec:merom} below we will show 
Theorem~\ref{mainthm:strong}\eqref{strongii}. This result is based on a 
reduction to the Lerch transcendent and its derivatives, which allows us to 
handle the contribution of non-diagonalizable families of endomorphisms 
$(\chi(p^n))_{n\in\N}$ for $p\in\Gamma$ parabolic. 

We remark that for some Fuchsian groups, in particular for Fuchsian Schottky 
groups, the standard setup uses Hilbert spaces of holomorphic $L^2$-functions in 
place of $\mc B(\mc E_A;V)$ \cite{Guillope_Lin_Zworski}. The proof of 
Theorem~\ref{mainthm:strong} applies, \textit{mutatis mutandis}, to this 
situation. We will not discuss it separately.  

Throughout Sections~\ref{sec:nuclear}-\ref{sec:proofcomplete} we will use, without 
any further reference, the notation from 
Sections~\ref{sec:geometry}-\ref{sec:statement}, in particular regarding the 
structure tuple $\mc S$ and the transfer operators $\TO_s$. Further we fix a 
family $\mc E_A = (\mc E_a)_{a\in A}$ as provided by Property~\ref{P5}.

\subsection{Nuclearity of transfer operators}\label{sec:nuclear}
In Proposition~\ref{nuclear} below we will show that $\TO_s$ defines a nuclear 
operator of order $0$ on $\mc B(\mc E_A;V)$, which then proves Theorem~\ref{mainthm:strong}\eqref{strongi}. 

Essential for the proof of Proposition~\ref{nuclear} is to understand the 
influence of the families $(\chi(p^n))_{n\in\N}$, $p\in\Gamma$ parabolic, on the 
growth of the transfer operators $\TO_s$. For this, we make crucial use of 
Lemma~\ref{jordanblock}.

\begin{prop}\label{nuclear}
Let $s\in\C$, $\Rea s > \tfrac{d_0}2$. Then the transfer operator $\TO_s$ 
defines a bounded self-map of $\mc B(\mc E_A;V)$. As such $\TO_s$ is nuclear of order 
$0$.
\end{prop}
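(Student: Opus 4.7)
My plan is to split each component $\TO_{s,a,b}$ of the transfer operator as in \eqref{TOpart} into the finite sum over $g\in C_{b,a}$ and, for each $p\in P_{b,a}$, the infinite sum $\sum_{n\in\N}\alpha_s(p^n)$, and to show that each piece is a bounded operator $\mc B(\mc E_a;V)\to\mc B(\mc E_b;V)$ which is nuclear of order zero. For the finite part, the standard argument suffices: by Property~\ref{P5}\eqref{P5iii}, $g^{-1}.\overline{\mc E_b}\subseteq \mc E_a$, so $\alpha_s(g)$ factors through the restriction $\mc B(\mc E_a;V)\to \mc B(\Omega;V)$ for any relatively compact open neighborhood $\Omega$ of $g^{-1}.\overline{\mc E_b}$ in $\mc E_a$; by Grothendieck's classical estimate for holomorphic functions on compactly nested domains this restriction is nuclear of order zero, and hence so is $\alpha_s(g)$.

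The substance of the proof is the treatment of $\sum_{n\in\N}\alpha_s(p^n)$ for parabolic $p\in P_{b,a}$. First I would establish operator-norm convergence. Since $c_p\notin \overline{\mc E_b}$ by Property~\ref{P5}\eqref{no_fp}, after a M\"obius change of variables sending $c_p$ to $0$ one has $p(z)=z/(1+\alpha z)$, hence $(p^{-n})'(z) = (1-n\alpha z)^{-2}$, yielding $|((p^{-n})'(z))^s| \ll n^{-2\Rea s}$ uniformly for $z\in\overline{\mc E_b}$. Combined with the polynomial growth bound $\|\chi(p^n)\| \ll n^{d_0-1}$ from Lemma~\ref{jordanblock}, this gives
\[
\|\alpha_s(p^n)\| \ll n^{d_0-1-2\Rea s},
\]
a summable sequence precisely when $\Rea s > d_0/2$. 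This is the single place where the non-unitary character of $\chi$ forces the shift of the half-plane of nuclearity: for unitary $\chi$ one effectively has $d_0=1$ and recovers the usual condition $\Rea s > 1/2$.

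To upgrade operator-norm convergence to nuclearity of order zero of the sum, I would exploit the crucial uniform containment in Property~\ref{P5}\eqref{P5iv}: there is a single compact set $K_{b,a,p}\subseteq \mc E_a$ with $p^{-n}.\overline{\mc E_b}\subseteq K_{b,a,p}$ for \emph{all} $n\in\N$. Fix a relatively compact open neighborhood $\Omega$ of $K_{b,a,p}$ in $\mc E_a$, and factor each
\[
\alpha_s(p^n) = M_{s,n}\circ r,
\]
where $r\colon \mc B(\mc E_a;V)\to \mc B(\Omega;V)$ is the (nuclear-of-order-zero) restriction, \emph{independent of $n$}, and $M_{s,n}\colon \mc B(\Omega;V)\to \mc B(\mc E_b;V)$ is the bounded multiplication-composition $(M_{s,n}f)(z) = ((p^{-n})'(z))^s\chi(p^n) f(p^{-n}.z)$. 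The norm bound above gives $\sum_n \|M_{s,n}\| < \infty$, so $M_s \sceq \sum_n M_{s,n}$ defines a bounded operator and $\sum_n \alpha_s(p^n) = M_s\circ r$ is bounded composed with nuclear of order zero, hence itself nuclear of order zero.

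Assembling, $\TO_{s,a,b}$ is a finite sum of nuclear-of-order-zero operators, and $\TO_s = (\TO_{s,a,b})_{a,b\in A}$ is a finite block matrix of such on $\mc B(\mc E_A;V) = \bigoplus_{a\in A}\mc B(\mc E_a;V)$, so $\TO_s$ itself is bounded and nuclear of order zero. The principal obstacle is the coupling in the second paragraph: without the polynomial estimate on $\|\chi(p^n)\|$ from Lemma~\ref{jordanblock} the series could in principle diverge on every half-plane, and the nuclearity argument in the third paragraph depends critically on the uniform-in-$n$ compact containment in Property~\ref{P5}\eqref{P5iv}, which is precisely what allows the restriction $r$ to be extracted as a single common nuclear factor.
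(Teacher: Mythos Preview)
Your proof is correct and follows essentially the same approach as the paper's: both arguments hinge on the estimate $\|\alpha_s(p^n)\|\ll n^{d_0-1-2\Rea s}$ obtained by combining Lemma~\ref{jordanblock} with the explicit parabolic derivative bound (via Property~\ref{P5}\eqref{no_fp}) and the uniform compact containment of Property~\ref{P5}\eqref{P5iv}. The only packaging difference is that the paper first establishes that $\TO_s$ maps the nuclear space $\mc H(\mc E_A;V)$ (holomorphic functions with the compact--open topology) into $\mc B(\mc E_A;V)$ and then invokes Grothendieck's theorem that bounded maps out of nuclear spaces are nuclear of order~$0$, whereas you make this explicit by factoring $\sum_n\alpha_s(p^n)=M_s\circ r$ through a single restriction operator~$r$; these are two standard formulations of the same mechanism.
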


\begin{proof}
For any open subset $\mc D\subseteq \C$ we let
\begin{align*}
 \mc H(\mc D;V) &\coloneqq \{ \text{$f\colon \mc D\to V$ holomorphic} \}
\end{align*}
denote the space of holomorphic functions $\mc D\to V$ and endow it with the 
compact-open topology, which turns $\mc H(\mc D;V)$ into a nuclear space 
\cite[I, \S2, no.\@ 3, Corollary]{Grothendieck_produit}. We further set 
\[
 \mc H(\mc E_A;V) \coloneqq \bigoplus_{a\in A} \mc H(\mc E_a;V),
\]
and endow it with the product space structure. Hence, $\mc H(\mc E_A;V)$ is a 
nuclear space.

We start by showing that the transfer operator $\TO_s$ defines a map from $\mc 
H(\mc E_A;V)$ to $\mc B(\mc E_A;V)$. To that end let 
\[
f=\bigoplus_{a\in A} f_a\in \mc H(\mc E_A;V)
\]
and set 
\[
h\coloneqq \TO_s f, \quad h=\bigoplus_{a\in A} h_a.
\]
Let $a\in A$. Then 
\begin{align}
 h_a &= \sum_{b\in A} \TO_{s,a,b}f_b \nonumber
  \\
 & =  \sum_{b\in A} \left( \sum_{g\in C_{a,b}} \alpha_s(g) f_b + \sum_{p\in 
P_{a,b}}\sum_{n\in\N} \alpha_s(g_pp^n) f_b\right). \label{TO_partial}
\end{align}
We claim that $h_a\in \mc B(\mc E_a;V)$. Let $b\in A$. 
Property~\ref{P5}\eqref{P5ii}-\eqref{P5iii} immediately imply that for all $g\in C_{a,b}$,  
the map
\begin{equation}\label{unibound_single}
 z\mapsto \alpha_s(g)f_b(z) = \big((g^{-1})'(z)\big)^s \chi(g) f_b(g^{-1}.z)
\end{equation}
is well-defined on $\overline{\mc E}_a$, continuous on $\overline{\mc E}_a$ and 
holomorphic on $\mc E_a$. This and the finiteness of $C_{a,b}$ implies that the 
first summand in \eqref{TO_partial} defines an element of $\mc B(\mc E_a;V)$. 
Therefore it remains to show that for each $p\in P_{a,b}$,  the infinite sum
\[
 F\coloneqq F_{s,p,b}\coloneqq\sum_{n\in\N}\alpha_s(g_pp^n)f_b
\]
defines an element of $\mc B(\mc E_a;V)$. Recall that $d_0$ denotes the maximal 
length of a Jordan chain of $\chi(q)$, where $q$ ranges over all parabolic 
elements in $\Gamma$. Lemma~\ref{jordanblock} shows 
\[
 \| \chi(p^{n})\| \ll n^{d_0-1}
\]
for all $n\in\N$. Further, by Property~\ref{P5}\eqref{P5iv}, there exists a 
compact subset $K_{a,b,p}$ of $\wh\C$ such that for each $n\in\N$,  
\begin{equation}\label{recallP5}
p^{-n}g_p^{-1}.\overline{\mc E}_a \subseteq K_{a,b,p} \subseteq \mc E_b.
\end{equation}
Since $f_b$ is continuous on $\mc E_b$, $\|f_b(p^{-n}g_p^{-1}.z)\|$ is bounded on 
$\overline{\mc E_a}$, uniformly in $n\in\N$. Thus,
\[
 \sup\Big\{ \left\|\chi(g_pp^{n}) f_b(p^{-n}g_p^{-1}.z) \right\| \colon z\in \overline{\mc 
E}_a,\ n\in\N\Big\} \ll n^{d_0-1}.
\]
Since $p$ is parabolic, and hence conjugate to $\textbmat{1}{1}{0}{1}$ in 
$\PSL_2(\R)$, there exist $c,d\in\R$ such that
\begin{equation}\label{formp}
 p^n = \bmat{1+n cd}{n d^2}{-n c^2}{1-n cd} 
\end{equation}
for all $n\in\Z$. The boundedness of $K_{a,b,p}$ in \eqref{recallP5} implies that $c\not=0$.

Let 
\[
 g_p = \bmat{g_{11}}{g_{12}}{g_{21}}{g_{22}}.
\]
Then 
\[
 p^{-n}g_p^{-1} = \bmat{*}{*}{g_{22}nc^2 - g_{21}(1+ncd)}{-g_{12}nc^2 + g_{11}(1+ncd)}.
\]

By Property~\ref{P5}\eqref{P5iv} or, equivalently, by \eqref{recallP5}, 
\[
 p^{-n}g_p^{-1}.z\not=\infty
\]
for all $z\in\overline{\mc E}_a$, $n\in\N$, and hence
\[
 \Rea\left( \big(g_{22}nc^2-g_{21}(1+ncd)\big)z - g_{12}nc^2+g_{11}(1+ncd) \right) \not=0
\]
Thus, for all $z\in\overline{\mc E}_a$, $n\in\N$,
\[
\big(\big(p^{-n}g_p^{-1}\big)'(z)\big)^s = \left( \left( nc\big( c(g_{22}z-g_{12}) + d(-g_{21}z+g_{11})\big) - g_{21}z+g_{11} \right)^{-2} \right)^s
\]
is well-defined, and 
\[
 \left| \left(\big(p^{-n}g_p^{-1}\big)'(z)\right)^s \right| \ll_s |nc\big( c(g_{22}z-g_{12}) + d(-g_{21}z+g_{11})\big) - g_{21}z+g_{11}|^{-2\Rea s}.
\]
By Property~\ref{P5}\eqref{no_fp}, 
\[
 c(g_{22}z-g_{12}) + d(-g_{21}z+g_{11})\not=0
\]
for all $z\in\overline{\mc E}_a$. In combination with $c\not=0$ (see above) it follows that 
\[
 \sup_{z\in\overline{\mc E}_a}\left| \left(\big(p^{-n}g_p^{-1}\big)'(z)\right)^s \right| 
\ll_s n^{-2\Rea s}.
\]
Therefore
\begin{equation}\label{unibound_parabolic}
 \sup_{z\in\overline{\mc E}_a} \left\| \alpha_s(g_pp^n) f_b(z) \right\| \ll_s 
n^{d_0-1-2\Rea s}.
\end{equation}
This implies that for $\Rea s > \tfrac{d_0}2$, the sum $F=\sum_{n\in\N} 
\alpha_s(g_pp^n)f_b$ converges uniformly on $\overline{\mc E}_a$. By Weierstrass' 
Theorem, $F$ is continuous on $\overline{\mc E}_a$ and holomorphic on $\mc E_a$. 
Therefore $F\in \mc B(\mc E_a;V)$, and hence $\TO_s\colon \mc H(\mc E_A;V) \to 
\mc B(\mc E_A;V)$.

As in the standard proof \cite{Ruelle_zeta, Fried_zetafunctionsI, Mayer_thermo, 
Pohl_representation}, one can now deduce that, as a map from $\mc H(\mc 
E_A;V)$ to $\mc B(\mc E_A;V)$, $\TO_s$ is nuclear of order $0$. Then one further 
concludes that $\TO_s$ remains nuclear of order $0$ when considered as a map 
$\mc B(\mc E_A;V)\to \mc B(\mc E_A;V)$.
\end{proof}

\subsection{The twisted Selberg zeta function as a Fredholm 
determinant}\label{sec:fredholm}

In this section we will show that, for $\Rea s \gg 1$, the twisted Selberg zeta function~$Z_{\Gamma,\chi}$ equals the 
Fredholm determinant of the transfer operator family $\TO_s$. This immediately 
proves Theorem~\ref{mainthm:strong}\eqref{strongiii}.

In order to simplify the proof of the trace formula in 
Lemma~\ref{lem:formulatrace} below, we set
\begin{equation}\label{tauC}
 \tau_s^\C\big(h^{-1}\big)\varphi(x) \coloneqq \big( h'(x)\big)^s\varphi(h.x)
\end{equation}
for subsets $U\subseteq\R$, functions $\varphi\colon U\to\C$, $h\in\Gamma$ and 
$x\in U$, whenever it makes sense.

\begin{lemma}\label{lem:formulatrace}
Let $\mc D$ be an open, connected, simply connected, bounded 
subset of~$\wh\C$ and suppose that $h\in \Gamma$ satisfies $h^{-1}.\overline{\mc D} \subseteq 
\mc D$. Then the map
\[
 \alpha_s(h) \colon \mc B(\mc D; V) \to \mc B(\mc D; V)
\]
is nuclear of order $0$  with trace
\[
 \Tr \alpha_s(h) = \frac{N(h)^{-s}}{1-N(h)^{-1}} \tr \chi(h).
\]
\end{lemma}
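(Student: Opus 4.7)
The plan is to prove the lemma in two stages: first verifying that $\alpha_s(h)$ defines a nuclear operator of order $0$ on $\mc B(\mc D;V)$, and second computing its trace by a holomorphic Lefschetz--Atiyah--Bott fixed point argument.

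For the nuclearity step I would argue in the spirit of Proposition~\ref{nuclear}, factoring $\alpha_s(h)$ through a larger holomorphic function space. The strict containment $h.\overline{\mc D}\subsetneq\mc D$ lets us choose an open set $\mc D'$ with $\overline{\mc D}\subsetneq\mc D'$ on which the defining formula for $\alpha_s(h)\psi$ still produces a well-defined holomorphic $V$-valued function for every $\psi\in\mc B(\mc D;V)$. This yields a factorization
\[
\mc B(\mc D;V)\xrightarrow{\ \alpha_s(h)\ }\mc B(\mc D';V)\xhookrightarrow{\ \text{restr.}\ }\mc B(\mc D;V),
\]
where the first arrow is bounded and the restriction is nuclear of order $0$ by the classical Grothendieck--K\"othe theorem on restriction maps between spaces of holomorphic functions on nested bounded open sets. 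The composition is therefore nuclear of order $0$.

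For the trace I would reduce to the scalar case via the canonical identification $\mc B(\mc D;V)\cong\mc B(\mc D;\C)\otimes V$. Since $\chi(h)$ is a constant endomorphism of $V$, a direct computation shows $\alpha_s(h)$ decomposes as the tensor product of the scalar operator $\tau_s^\C(h^{-1})$ from \eqref{tauC} with the endomorphism $\chi(h)$. Traces factor across tensor products, so
\[
\Tr\alpha_s(h)\;=\;\Tr\tau_s^\C(h^{-1})\cdot\tr\chi(h).
\]
The scalar trace is evaluated by the standard holomorphic Lefschetz formula: because $h$ restricts to a strictly contracting holomorphic self-map of the bounded simply connected region $\mc D$, it possesses a unique attracting fixed point $z_+\in\mc D$. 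Expanding in the Taylor basis $\{(z-z_+)^n\}_{n\geq 0}$ and reading off the diagonal entries of $\tau_s^\C(h^{-1})$ produces the geometric series
\[
\Tr\tau_s^\C(h^{-1})\;=\;\sum_{n=0}^\infty(h'(z_+))^{n+s}\;=\;\frac{(h'(z_+))^s}{1-h'(z_+)},
\]
which converges since $|h'(z_+)|<1$. A short matrix computation conjugating $h$ into diagonal form in $\PSL_2(\R)$ identifies $h'(z_+)=N(h)^{-1}$, and substituting gives the asserted formula.

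The only delicate point I anticipate is in the nuclearity stage: one must verify that the thickening $\mc D'$ can be arranged so that both $\psi\circ h$ and the scalar weight occurring in $\alpha_s(h)$ extend as bounded holomorphic functions to $\mc D'$. The compactness of $h.\overline{\mc D}$ inside $\mc D$ provides the room for the thickening; simple connectedness of $\mc D$ guarantees a single-valued holomorphic branch of $(\,\cdot\,)^s$; and the fact that $h\in\PSL_2(\R)$ is globally holomorphic on $\wh\C$ minus its polar point (which lies outside $\overline{\mc D}$ since $h.\overline{\mc D}\subseteq\mc D$) ensures the weight is bounded and non-vanishing on a neighborhood of $\overline{\mc D}$. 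Once this set-up is in place, both the nuclearity and the Atiyah--Bott trace computation follow standard arguments from the transfer-operator literature \cite{Ruelle_zeta, Fried_zetafunctionsI, Mayer_thermo}.
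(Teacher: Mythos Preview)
Your approach is essentially the paper's: reduce the vector-valued trace to the scalar one by identifying $\mc B(\mc D;V)\cong\mc B(\mc D;\C)\otimes V$ (the paper does this via a basis of $V$ and the block-matrix representation $(m_{ij}\tau_s^\C(h))_{i,j}$, then applies \cite[II.3, Prop.~2]{Grothendieck_fredholm}), and for the scalar trace invoke Ruelle \cite{Ruelle_zeta}, which is exactly the holomorphic Lefschetz computation you sketch.

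One correction: by the definitions in Section~\ref{sec:tos}, $\alpha_s(h)=\tau_s^\C(h)\otimes\chi(h)$, not $\tau_s^\C(h^{-1})\otimes\chi(h)$; the paper even records $\alpha_s(h)=\tau_s^\C(h)$ in the trivial case. This slip is harmless for the final formula because $N(h)=N(h^{-1})$, as the paper also notes. (Relatedly, the hypothesis $h.\overline{\mc D}\subseteq\mc D$ in the lemma is in tension with the definition of $\alpha_s(h)$, which needs $h^{-1}.\overline{\mc D}\subseteq\mc D$; your choice of $\tau_s^\C(h^{-1})$ is consistent with the hypothesis as written, so the confusion is understandable.) Finally, your ``Taylor basis'' computation is the right heuristic but not a proof as stated, since $\{(z-z_+)^n\}$ is not a Schauder basis of $\mc B(\mc D;\C)$; the rigorous version is precisely Ruelle's lemma, which both you and the paper cite.
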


\begin{proof}
Note that the element $h$ necessarily is hyperbolic. Thus, the norm $N(h)$ is 
indeed defined.

If $(V,\chi)$ is the trivial one-dimensional representation, then $\alpha_s(h) = 
\tau_s^\C(h)$. Note that $N(h) = N(h^{-1})$. By \cite{Ruelle_zeta} we have
\begin{equation}\label{trace1d}
 \Tr \tau_s^\C(h) = \frac{N(h)^{-s}}{1-N(h)^{-1}},
\end{equation}
which shows the statement of the Lemma for this particular case. 

Let $(V,\chi)$ now be a generic representation with non-expanding cusp monodromy 
 and fix a basis $B_V\coloneqq \{v_1,\ldots, v_d\}$ of $V$. The map
\begin{equation}\label{traceiso}
 \mc B(\mc D;\C)^d \to \mc B(\mc D;V),\quad (f_j)_{j=1}^d \mapsto \sum_{j=1}^d 
f_jv_j
\end{equation}
is an isomorphism of Banach spaces. Let 
\[
 M = (m_{ij})_{i,j=1,\ldots, d}
\]
be the $d\times d$-matrix that represents $\chi(h)$ with respect to $B_V$. Under 
the isomorphism~\eqref{traceiso}, the action of $\alpha_s(h)$ is then given by
\[
 \left( m_{ij}\tau_s^\C(h) \right)_{i,j=1,\ldots, d}.
\]
Since 
\[
 \mc B(\mc D;\C)^d \cong \bigoplus_{j=1}^d \mc B(\mc D;\C)v_j
\]
is a topological direct sum, \cite[Chapitre~II.3, 
Prop.~2]{Grothendieck_fredholm} shows that  
\begin{align*}
\Tr \alpha_s(h) & = \sum_{j=1}^d m_{jj} \Tr \tau_s^\C(h).
\end{align*}
Combining this equality with \eqref{trace1d} completes the proof.
\end{proof}

\begin{prop}\label{selbergTO}
For $\Rea s \gg 1$, $Z_{\Gamma,\chi}(s) = \det(1-\TO_s)$.
\end{prop}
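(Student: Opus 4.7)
The plan is to follow the standard strategy of expressing $\log\det(1-\TO_s)$ as a sum of traces $\Tr\TO_s^n$, identifying each such trace as a sum over periodic orbits of the discrete dynamical system $(D_\st,F)$, and then matching the result against the series expansion \eqref{logszf} of $\log Z_{\Gamma,\chi}(s)$ that was already established during the proof of Theorem~\ref{thm:selberg_conv}. By Proposition~\ref{nuclear}, for $\Rea s \gg 1$ the operator $\TO_s$ is nuclear of order $0$, so the Fredholm determinant $\det(1-\TO_s)$ is well-defined and
\[
 \log\det(1-\TO_s) \;=\; -\sum_{n=1}^\infty \frac{1}{n}\Tr \TO_s^n
\]
for $\Rea s$ large enough to ensure absolute convergence. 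For this absolute convergence to hold uniformly once I identify the terms with hyperbolic conjugacy classes, I will rely on the bounds from Proposition~\ref{est_trace} together with the standard asymptotics of the counting function of $[\Gamma]_\hyp$ (equivalently, the convergence of $Z_{\Gamma,\bf 1}$ for $\Rea s$ above the Hausdorff dimension of the limit set).

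Next I would expand $\TO_s^n$ according to the definition \eqref{TOpart}: each summand is of the form $\alpha_s(h_1)\alpha_s(h_2)\cdots\alpha_s(h_n)$, where each $h_i$ is either an element of some $C_{b_i,a_i}$ or a power $p_i^{k_i}$ with $p_i\in P_{b_i,a_i}$ and $k_i\in\N$. Using the cocycle property of $\alpha_s$, this equals $\alpha_s(h_1h_2\cdots h_n)$ composed with a shift of the component index. The summands contributing to $\Tr \TO_s^n$ are precisely those for which the composition is a self-map of some $\mc B(\mc E_a;V)$; by the definition of $P_n$ (see \eqref{defPn}) this is exactly the set of elements $h\in P_n$. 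Using Property~\ref{P5} to verify that $h^{-1}.\overline{\mc E}_a\subseteq\mc E_a$ so that Lemma~\ref{lem:formulatrace} applies to each $\alpha_s(h)$, I obtain
\[
 \Tr \TO_s^n \;=\; \sum_{h\in P_n} \frac{N(h)^{-s}}{1-N(h)^{-1}}\tr \chi(h).
\]

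The final step is to reorganize the double sum $\sum_n \frac{1}{n}\sum_{h\in P_n}$ into a sum over $[\Gamma]_\hyp$. By Property~\ref{P3}, every $[g]\in[\Gamma]_\hyp$ is represented by exactly one $P_n$ (for $n=w(g)$), and by Property~\ref{P4} it is represented there by exactly $p(g)=w(g)/m(g)$ distinct elements. Since $\tr\chi$ and $N(\cdot)$ are conjugation-invariant, the telescoping
\[
 -\sum_{n=1}^\infty \frac{1}{n}\sum_{h\in P_n} \frac{N(h)^{-s}}{1-N(h)^{-1}}\tr\chi(h)
 \;=\; -\sum_{[g]\in[\Gamma]_\hyp} \frac{p(g)}{w(g)}\cdot\frac{N(g)^{-s}}{1-N(g)^{-1}}\tr\chi(g)
\]
simplifies, using $p(g)/w(g)=1/m(g)$, to the right-hand side of \eqref{logszf}. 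Exponentiating gives $\det(1-\TO_s)=Z_{\Gamma,\chi}(s)$.

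The main technical obstacle I anticipate is the justification of the unrestricted interchange of summations and the absolute convergence of $\sum_n \tfrac{1}{n}\Tr \TO_s^n$ in the regime $\Rea s\gg 1$, because each $P_n$ is only countably infinite once parabolic contributions are present (the infinite inner sums in \eqref{TOpart} produce, through iteration, elements of $P_n$ associated to deep cusp excursions). To handle this I will combine the uniform bounds $\|\chi(p^k)\|\ll k^{d_0-1}$ from Lemma~\ref{jordanblock} with the decay $|((p^{-k})'(z))^s|\ll k^{-2\Rea s}$ from \eqref{unibound_parabolic}, and then invoke Proposition~\ref{est_trace} and the known convergence range of $Z_{\Gamma,\bf 1}$ exactly as in the proof of Theorem~\ref{thm:selberg_conv}. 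All remaining steps (the cocycle identity, the trace formula of Lemma~\ref{lem:formulatrace}, and the reindexing via Properties~\ref{P2}--\ref{P4}) are then purely formal.
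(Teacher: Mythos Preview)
Your proposal is correct and follows essentially the same route as the paper's proof: expand $\log\det(1-\TO_s)$ as $-\sum_n \tfrac1n\Tr\TO_s^n$, identify $\Tr\TO_s^n$ via Property~\ref{P5} and Lemma~\ref{lem:formulatrace} as the sum $\sum_{h\in P_n}\tfrac{N(h)^{-s}}{1-N(h)^{-1}}\tr\chi(h)$, and then reindex over $[\Gamma]_\hyp$ using Properties~\ref{P3} and~\ref{P4} together with $p(g)/w(g)=1/m(g)$ to recover \eqref{logszf}. The only difference is cosmetic: the paper compresses the convergence justification into the phrase ``analogously to the case of the trivial one-dimensional representation,'' whereas you spell out that the infinite sums over parabolic powers are controlled via Lemma~\ref{jordanblock} and the bound \eqref{unibound_parabolic}.
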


\begin{proof}
Recall the constant $C=C(\Gamma,\chi)$ from Theorem~\ref{thm:selberg_conv}, and 
that $d_0$ denotes the maximal length of a Jordan block of $\chi(p)$, 
$p\in\Gamma$ parabolic. Let $s\in\C$ with $\Rea s> \max\{C, d_0/2\}$.  By 
Theorem~\ref{thm:selberg_conv}, $Z_{\Gamma,\chi}(s)$ converges. Further 
Proposition~\ref{nuclear} implies that the Fredholm determinant of $\TO_s$ 
exists. Analogously to the case of the trivial one-dimensional representation 
one sees that 
\[
 \det\left( 1 - \TO_s \right) = \exp\left( - \sum_{n\in\N} \frac1n \Tr 
\TO_s^n\right).
\]
Property~\ref{P5} combined with \eqref{defPn} shows that for $n\in\N$, $h\in 
\Per_n$ and $a\in A$ as in \eqref{defPn} we have
\[
 h^{-1}.\overline{\mc E}_a \subseteq \mc E_a. 
\]
By \cite[Chapitre II.3, Prop.~2]{Grothendieck_fredholm} and 
Lemma~\ref{lem:formulatrace}, 
\[
 \Tr \TO_s^n = \sum_{h\in \Per_n} \Tr \alpha_s(h) = \sum_{h\in \Per_n} 
\frac{N(h)^{-s}}{1-N(h)^{-1}} \tr \chi(h).
\]
Then Properties~\ref{P3} and \ref{P4} imply 
\begin{align}
\log Z_{\Gamma,\chi}(s) & = \sum_{[g]\in [\Gamma]_\prim} \sum_{k=0}^\infty 
\log\left( \det\left(1-\chi(g)N(g)^{-(s+k)} \right) \right) \nonumber
\\
& = -\sum_{[g]\in [\Gamma]_\prim} \sum_{m=1}^\infty \frac1m 
\frac{N(g^m)^{-s}}{1-N(g^m)^{-1}} \tr \chi(g^m) \nonumber
\\
& = -\sum_{[g]\in [\Gamma]_\hyp} \frac{1}{m(g)} \frac{N(g)^{-s}}{1-N(g)^{-1}}\tr 
\chi(g) \nonumber
\\
& = -\sum_{[g]\in [\Gamma]_\hyp} \frac{p(g)}{w(g)} \frac{N(g)^{-s}}{1-N(g)^{-1}}\tr 
\chi(g) \nonumber
\\
& = -\sum_{w=1}^\infty\frac1w \sum_{h\in \Per_w} \frac{N(h)^{-s}}{1-N(h)^{-1}} 
\tr\chi(h) \nonumber
\\
& = -\sum_{w=1}^\infty \frac1w \Tr \TO_s^w \nonumber
\\
& = \log\left( \det\left(1-\TO_s\right)\right). \nonumber
\end{align}
This completes the proof.
\end{proof}

\subsection{Meromorphic continuation of the transfer operator 
family}\label{sec:merom}

In this section we will show that the map
\[
 \left\{ s\in\C \ \left\vert\ \Rea s > \tfrac{d_0}{2} \right.\right\} \to 
\Op\big( \mc B(\mc E_A;V)\big), \quad s\mapsto \TO_s,
\]
extends meromorphically to all of $\C$ with values in nuclear operators of order 
$0$ on $\mc B(\mc E_A;V)$. This means that there exists a discrete set 
$P\subseteq\C$ (`candidates for poles') and for each $s\in\C\smallsetminus P$ we 
find an operator $\wt\TO_s\colon \mc B(\mc E_A;V) \to \mc B(\mc E_A;V)$ which is 
nuclear of order $0$ and which equals $\TO_s$ for $\Rea s > \tfrac{d_0}2$. 
Moreover, for each $f\in\mc B(\mc E_A;V)$ and each point 
\[
z\in\mc E\coloneqq \bigcup_{a\in A}\mc E_a
\]
the map $s\mapsto \wt\TO_sf(z)$ is meromorphic with poles contained in $P$, and 
the map $(s,z)\mapsto \wt\TO_sf(z)$ is continuous on $(\C\smallsetminus P)\times \mc 
E$. 

If $\chi$ is the trivial one-dimensional representation, then the Hurwitz zeta 
function plays a crucial role in the proof of this meromorphic continuability. 
For unitary representations $\chi$, the Lerch zeta function features. For generic 
representations $\chi$ with non-expanding cusp monodromy, we will take advantage of 
the Lerch transcendent and its derivatives. The main emphasis of this section is 
to show how the Lerch transcendent allows us to generalize the standard proof 
for trivial and unitary representations to the larger class of representations 
with non-expanding cusp monodromy. This result shows 
Theorem~\ref{mainthm:strong}\eqref{strongii}.

We denote the \textit{Lerch transcendent} by
\[
 \Phi(s,\lambda,w) \coloneqq \sum_{n=0}^\infty \frac{\lambda^n}{(n+w)^s}.
\]
For $m\in\N_0$ we set
\[
 \Phi_m(s,\lambda,w) \coloneqq \frac{1}{m!} 
\frac{\partial^m\Phi}{\partial\lambda^m}(s,\lambda,w) = \sum_{n=m}^\infty 
{n\choose m} \frac{\lambda^{n-m}}{(n+w)^s}.
\]
For $|\lambda|=1$ and $w\notin -\N_0$ (the values we need) the series converges 
and defines a holomorphic function on $\{s\in\C\mid\Rea s > m+1\}$. For 
$\lambda\not=1$ it extends in $s$ to an entire function. For $\lambda=1$ it 
extends meromorphically in $s$ to all of $\C$ with all poles contained in $\{1,\ldots, 
m+1\}$, all simple, and $m+1$ being a pole. In both cases, the meromorphic extension is continuous on its domain of 
holomorphy as a function in the two variables $(s,w)$. To be more precise, the extension is unique for $w\in\C\smallsetminus (-\infty,0]$. The additional extension in the $w$-variable requires a choice of an extension of the complex logarithm. 

For $\Phi=\Phi_0$ proofs of these statements can be found in the literature, for $\Phi_m$ with $m>0$ they can be deduced from the one for $\Phi_0$ or shown in analogy. For the convenience of the reader, we provide a proof in Section~\ref{sec:lerch} below. We refer to 
\cite{Lagarias_Li} for detailed discussions on the meromorphic continuability 
of the Lerch transcendent as a function of all three variables.

\begin{prop}\label{mainprop}
The map $s\to \TO_s$ extends to a meromorphic function on $\C$ with values in 
nuclear operators of order $0$ on $\mc B(\mc E_A;V)$. The possible poles are all 
simple and contained in $\tfrac12(d_1-\N_0)$ (recall $d_1$ from Theorem~\ref{mainthm:strong}). For each pole $s_0$, there is a 
neighborhood $U$ of $s_0$ such that the meromorphic continuation $\wt\TO_s$ is 
of the form
\[
 \wt\TO_s = \frac{1}{s-s_0}\mc A_s + \mc B_s,
\]
where the operators $\mc A_s$ and $\mc B_s$ are holomorphic on $U$, and $\mc 
A_s$ is of rank at most
\[
 \sum_{a,b\in A} \sum_{p\in P_{a,b}} \sum_{d \in \jc(p)} {d+1 \choose 2}.
\]
\end{prop}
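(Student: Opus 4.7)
The plan is to split $\TO_s$ into its finite and its infinite contributions. Each sum $\sum_{g\in C_{a,b}}\alpha_s(g)$ is finite and, by Property~\ref{P5}\eqref{P5iii} together with finite-dimensionality of $V$, entire in $s$ with values in bounded operators on $\mc B(\mc E_A;V)$; so is every finite initial segment of $\sum_{n\in\N}\alpha_s(p^n)$. The meromorphic continuation problem therefore reduces to analysing, for each parabolic $p\in P_{a,b}$, the tail
\[
M_{s,p}\sceq \sum_{n\geq N_0}\alpha_s(p^n)
\]
for a sufficiently large threshold $N_0$. Using the explicit matrix presentation \eqref{formp} and writing $u=cz+d$, $v=1/(cu)$ and $z_p=-d/c$ for the fixed point of $p$, a direct computation yields
\[
((p^{-n})'(z))^s=(cu)^{-2s}(n+v)^{-2s},\qquad p^{-n}.z-z_p=\frac{1}{c^2(n+v)}.
\]
Property~\ref{P5}\eqref{no_fp} ensures $z_p\notin\overline{\mc E}_a$, so $u$ and $v$ are holomorphic and non-vanishing on $\overline{\mc E}_a$.

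Since $p^{-n}.z\to z_p$ uniformly on $\overline{\mc E}_a$ by Property~\ref{P5}\eqref{P5iv}, and $z_p$ lies in the interior of the holomorphy domain of $f_b$ (namely $K_{a,b,p}\subseteq\mc E_b$), I would Taylor-expand $f_b$ about $z_p$, choosing $N_0$ large enough that $p^{-n}.z$ lies in a fixed disk of convergence for all $z\in\overline{\mc E}_a$ and $n\geq N_0$:
\[
f_b(p^{-n}.z)=\sum_{k=0}^\infty \frac{f_b^{(k)}(z_p)}{k!\,c^{2k}(n+v)^k}.
\]
A Jordan decomposition of $\chi(p)$, combined with non-expanding cusp monodromy, produces
\[
\chi(p^n)=\sum_{j,\ell}\binom{n}{\ell}\lambda_j^{n-\ell}M_{j,\ell},
\]
where $|\lambda_j|=1$, $\ell$ ranges over $\{0,\ldots,d_j-1\}$ for the Jordan chain length $d_j$ associated to $\lambda_j$, and $M_{j,\ell}\in\End(V)$ are $n$-independent. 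Inserting both expansions and interchanging the (normally convergent for $\Rea s>d_0/2$) sums, after absorbing finitely many initial indices into an entire remainder, yields
\[
M_{s,p}f_b(z)=(cu)^{-2s}\sum_{j,\ell,k}\frac{\lambda_j^{-\ell}}{k!\,c^{2k}}\,M_{j,\ell}f_b^{(k)}(z_p)\,\Phi_\ell(2s+k,\lambda_j,v).
\]

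The meromorphic continuation of $\wt\TO_s$ then follows from the meromorphic behaviour of the Lerch transcendent derivatives $\Phi_\ell$ recalled just before the proposition: entire in $s$ for $\lambda_j\neq 1$, and with simple poles at $2s+k\in\{1,\ldots,\ell+1\}$ for $\lambda_j=1$. Since $\ell+1\leq d_j\leq d_0$ and $k\in\N_0$, all poles of $\wt\TO_s$ are simple and lie in $\tfrac12(d_0-\N_0)$. At any pole $s_0$ the residue operator factors, for every contributing sextuple $(a,b,p,j,\ell,k)$, through the bounded linear functional $f_b\mapsto f_b^{(k)}(z_p)\in V$ followed by the bounded multiplier $V\to\mc B(\mc E_a;V)$, $v\mapsto(cu)^{-2s_0}M_{j,\ell}v$; hence it has finite rank. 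For each Jordan chain of length $d$ the number of pairs $(k,\ell)$ with $\ell\in\{0,\ldots,d-1\}$ and $k\in\N_0$ such that $2s_0+k\in\{1,\ldots,\ell+1\}$ is bounded by $\sum_{\ell=0}^{d-1}(\ell+1)=\binom{d+1}{2}$; summing over $(a,b,p,d)$ then yields the stated rank bound.

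The main obstacle is upgrading these pointwise-in-$z$ identities to identities of bounded operators on $\mc B(\mc E_A;V)$ and verifying that nuclearity of order $0$ is preserved under the continuation. The finite-rank residues are trivially nuclear of order $0$; for the holomorphic remainder one replicates the estimate \eqref{unibound_parabolic} uniformly on compact subsets of the $s$-plane, using that $\Phi_\ell(2s+k,\lambda_j,v)$ is jointly continuous in $(s,v)$ on its holomorphy domain and that the relevant values of $v=1/(c(cz+d))$, $z\in\overline{\mc E}_a$, stay in a compact subset of that domain. The factorization through $\mc H(\mc E_A;V)\to\mc B(\mc E_A;V)$ used in Proposition~\ref{nuclear} then delivers nuclearity of order $0$, completing the argument.
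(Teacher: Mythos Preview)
Your approach is essentially the same as the paper's: reduce to the parabolic tails $\sum_n\alpha_s(p^n)$, pass to Jordan normal form so that the $\chi(p^n)$-contribution becomes $\sum_m\binom{n}{m}\lambda^{n-m}$, Taylor-expand $f_b$ at the fixed point $z_p$, and recognise the resulting $n$-sums as $\Phi_m(2s+k,\lambda,\cdot)$ whose known continuation properties yield the pole locations and the rank bound.

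The one organisational difference is that the paper, instead of inserting the full Taylor series as you do, uses for each $M\in\N_0$ the splitting $H_{s,m}=H_{s,m}\circ Q_M + H_{s,m}\circ P_M$, where $Q_M$ is the degree-$M$ Taylor polynomial at $z_p$ and $P_M=1-Q_M$. The first piece is a \emph{finite} linear combination of $\Phi_m(2s+k,\lambda,\cdot)$, $k=0,\ldots,M$, hence manifestly meromorphic and of finite rank at the poles; for the second, a direct estimate using $|P_Mf(w)|\ll|w-z_p|^{M+1}$ shows that $H_{s,m}\circ P_M$ is already holomorphic (and nuclear via the $\mc H\to\mc B$ factorisation) on $\{\Rea s>(d_0-M-1)/2\}$. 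Letting $M\to\infty$ gives the continuation. This sidesteps the need to justify convergence of your infinite $k$-sum of analytically continued $\Phi_\ell$'s---the point you correctly flag as ``the main obstacle''---and thereby cleanly delivers nuclearity of order $0$ of the continued operator on every half-plane. Your route can be completed along the lines you sketch, but the paper's finite-Taylor-plus-remainder decomposition is the tidier bookkeeping. (One further technical wrinkle the paper handles explicitly and you elide: the branch choice in $((p^{-n})'(z))^s=(cu)^{-2s}(n+v)^{-2s}$ introduces a $2\pi\Z$-valued phase $\varphi(n,z)$, which is shown to be constant for $n\geq n_0$ and then normalised to $0$.)
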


\begin{proof}
For Banach spaces $\mc B_1$, $\mc B_2$ we denote by $\Op(\mc B_1;\mc B_2)$ the 
space of all linear operators $\mc B_1\to\mc B_2$. 

It suffices to show that for $a,b\in A$ and $p\in P_{a,b}$ the map
\[
 \{s\in\C\mid\Rea s > d_0/2\} \to \Op\big(\mc B(\mc E_b;V); \mc B(\mc 
E_a;V)\big),\quad s\mapsto  L_s\coloneqq\sum_{n\in\N} \alpha_s(g_pp^n) 
\]
extends meromorphically to all of $\C$ with values in nuclear operators of order 
$0$ and all poles contained in $P$. Since
\[
 \sum_{n\in\N} \alpha_s(g_pp^n) = \alpha_s(g_p) \sum_{n\in\N}\alpha_s(p^n),
\]
it also suffices to show that 
\[
 \{s\in\C\mid\Rea s > d_0/2\} \to \Op\big(\mc B(\mc E_b;V);\alpha_s(g_p^{-1})\mc B(\mc 
E_a;V)\big),\quad s\mapsto  \sum_{n\in\N} \alpha_s(p^n)
\]
extends meromorphically to all of $\C$. Hence, without loss of generality, we may assume that $g_p=\id$. 

Let $a,b\in A$ and $p\in P_{a,b}$. Represent $\chi(p)$ in Jordan 
normal form  and fix a Jordan block. Let $\lambda\in\C$ be the eigenvalue of 
this Jordan block, and let $d_b$ denote the size of the Jordan block (that is, 
the length of the defining Jordan chain). It suffices to study the meromorphic 
continuability of the maps
\begin{align*}
 \left\{s\in\C\ \left\vert\ \Rea s > \tfrac{d_0}2 \right.\right\} & \to 
\Op\big(\mc B(\mc E_b;\C); \mc B(\mc E_a;\C)\big)
\\
s & \mapsto H_{s,m} \coloneqq \sum_{n\in\N} {n\choose m} \lambda^{n-m} \tau_s^\C 
(p^n)
\end{align*}
for $m\in \{0,1,\ldots, d_b-1\}$. Recall that by hypothesis we have 
$|\lambda|=1$.

Recall that (without loss of generality) $\mc E_a$ and $\mc E_b$ are subsets of 
$\C$. For $M\in\N_0$, let 
\[
Q_M, P_M\colon \mc B(\mc E_b;\C) \to \mc B(\mc E_b;\C)
\]
be the operator that maps a function to its Taylor polynomial of degree $M$ 
centered at the fixed point $x_p$ of $p$ or subtracts this Taylor 
polynomial from it, respectively. Thus we set
\[
 Q_M(f)(z) \coloneqq \sum_{k=0}^M \frac{f^{(k)}(x_p)}{k!} (z-x_p)^k
\]
and
\[
 P_M(f)(z) \coloneqq (1-Q_M)f(z) \coloneqq f(z) - \sum_{k=0}^M \frac{f^{(k)}(x_p)}{k!} 
(z-x_p)^k.
\]
Then
\begin{align*}
 H_{s,m} = H_{s,m}\circ Q_M + H_{s,m}\circ P_M.
\end{align*}
We first study $H_{s,m}\circ Q_M$. By Property~\ref{P5}\eqref{P5iv}, 
$\overline{\mc E_a}$ does not contain the fixed point $x_p$ of $p$. Using the 
explicit form \eqref{formp} for $p^n$, $n\in\N_0$, we find $x_p=-d/c$. Thus, for 
$z\in\mc E_a$ we have $cz+d\not=0$ and 
\[
 p^{-n}.z - x_p = \left( c^2 \left( n + \frac{1}{c ( cz+d )} \right) 
\right)^{-1}.
\]
By Property~\ref{P5}\eqref{P5iv}, 
\[
\Rea\big( n c ( cz+d ) + 1\big) \not= 0
\]
for all $z\in\overline{\mc E}_a$, $n\in\N$. Thus, 
\begin{align*}
 \left( \left(p^{-n}\right)'(z) \right)^s &=\left( \big( n c ( cz+d ) + 
1\big)^{-2}\right)^s 
 \\
 & = e^{-2si\cdot \varphi(n,z)} \left(\left( c (cz+d) \right)^{-2}\right)^s \left(\left( n + 
\frac{1}{ c (cz+d) }\right)^{-2}\right)^s
\end{align*}
for a function $\varphi\colon\N\times\overline{\mc E}_a\to 2\pi\Z$. The function 
$\varphi$ is continuous and hence constant in the $z$-variable. Further, since 
the argument (or phase) of  $n + \frac{1}{ c (cz+d) }$ tends to $0$ as 
$n\to\infty$, there exists $n_0\in\N$ such that $\varphi$ is constant on 
$\N_{\geq n_0}\times\overline{\mc E}_a$ (note that $\frac{1}{ c (cz+d) }$ is 
bounded on $\overline{\mc E}_a$). Without loss of generality we may assume that 
$\varphi = 0$, and that $\Rea\big( c(cz+d) \big)>0$ for all $z\in\overline{\mc E}_a$. Hence,
\[
 \left( \left(p^{-n}\right)'(z) \right)^s = \left( c (cz+d) \right)^{-2s} \left( 
n + \frac{1}{ c (cz+d) }\right)^{-2s}.
\]
Then 
\begin{align*}
(H_{s,m}&\circ Q_M)f(z) = \sum_{k=0}^M \frac{f^{(k)}(x_p)}{k!} \sum_{n\in\N} 
{n\choose m} \lambda^{n-m} \left( \big(p^{-n}\big)'(z)\right)^s \big( p^{-n}.z - 
x_p\big)^k
\\
& = \sum_{k=0}^M \frac{f^{(k)}(x_p)}{k!} c^{-2} \big(c (cz+d) \big)^{-2s} 
\sum_{n\in\N} {n\choose m} \lambda^{n-m} \left( n + \frac{1}{c (cz+d) 
}\right)^{-2s-k}
\\
& = \sum_{k=0}^M \frac{f^{(k)}(x_p)}{k!} c^{-2} \big( c (cz+d) \big)^{-2s} 
\Phi_m\left(2s+k, \lambda, \delta_{0,m} + \frac{1}{ c (cz+d) }\right),
\end{align*}
where 
\[
\delta_{0,m} \coloneqq \begin{cases} 1 & \text{if $m=0$} \\ 0 & \text{otherwise.} 
\end{cases}
\]
The properties of $\Phi_m$, as recalled just before the statement of this proposition, yield the existence of a meromorphic continuation of $s\mapsto 
(H_{s,m}\circ Q_M)f(z)$ with the localization and order of poles as claimed. Note that $\Phi_m$ may only contribute a pole if $\lambda=1$, and hence the first possible pole is located at $s=d_1/2$ instead of $d_0/2$.

The extension of $H_{s,m}\circ Q_M$ is nuclear of order $0$ as a finite rank 
operator.

We now study $H_{s,m}\circ P_M$. We fix an open ball $B$ in $\C$ around $x_p$ 
such that $B\subseteq \mc E_b$. Let $f\in \mc B(\mc E_b)$ and set $f_M\coloneqq 
P_M(f)$. For all $z\in B$ we have
\[
 |f_M(z)| \ll |z-x_p|^{M+1}.
\]
Further, for all $z\in \C$, we have 
\[
 \lim_{n\to\infty} p^{-n}.z = x_p.
\]
Let $z\in \mc E_a$. Then there exists $n_0=n_0(z)\in\N$ such that 
\[
 p^{-n}.z \in B
\]
for $n>n_0$. Thus,
\begin{align*}
\left| \left(H_{s,m}\circ P_M\right) f (z) \right| &= \left| \sum_{n\in\N} 
{n\choose m} \lambda^{n-m} \tau_s^\C(p^n) f_M(z)\right|
\\
& \leq \left| \sum_{n\leq n_0} {n\choose m} \lambda^{n-m} 
\tau_s^\C(p^n)f_M(z)\right|
\\
& \quad + \sum_{n>n_0} {n\choose m} \left| \big(p^{-n}\big)'(z)\right|^{\Rea s} 
\left| f_M(p^{-n}.z) \right|
\\
& \ll \left| \sum_{n\leq n_0} {n\choose m} \lambda^{n-m} 
\tau_s^\C(p^n)f_M(z)\right|
\\
& \quad + \sum_{n>n_0} n^{d_0-1} \left|cz+d \right|^{-2\Rea s} \cdot \left|n + 
\frac{1}{c (cz+d)} \right|^{-(M+1+2\Rea s)}.
\end{align*}
Hence, $(H_{s,m}\circ P_M)f(z)$ converges for 
\[
 \Rea s > \frac{d_0-M-1}{2}.
\]
The operator $H_{s,m}\circ P_M$ is nuclear of order $0$ since $P_M$ is bounded.

Since these arguments apply to each $M\in\N_0$, the proof is complete.
\end{proof}

\subsection{Proof of Theorem~\ref{mainthm:strong}}\label{sec:proofcomplete}

Theorem~\ref{mainthm:strong}\eqref{strongi} is just 
Proposition~\ref{nuclear}, and Theorem~\ref{mainthm:strong}\eqref{strongiii} is shown by 
Proposition~\ref{selbergTO}. Theorem~\ref{mainthm:strong}\eqref{strongii} 
follows from Proposition~\ref{mainprop}. The remaining statement 
Theorem~\ref{mainthm:strong}\eqref{strongiv} are implied by 
\eqref{strongi}-\eqref{strongiii} analogously to the case of trivial or unitary 
representations (see, e.\,g., the proof of \cite[Theorem~4.14]{Moeller_Pohl} or of \cite[Theorem~5.1]{Pohl_representation}). \qed

\section{Representations with non-expanding cusp monodromy}\label{sec:repr}

Obviously, there are (non-uniform) Fuchsian groups $\Gamma$ that admit finite-dimensional
representations \textit{without} non-expanding cusp monodromy. In this section we will provide 
examples of finite-dimensional representations \textit{with} non-expanding cusp monodromy. 

For Fuchsian groups $\Gamma$ without any parabolic elements (in particular, for uniform Fuchsian groups), the condition on 
parabolic elements is void and hence any representation of $\Gamma$ 
has non-expanding cusp monodromy. For non-uniform Fuchsian groups $\Gamma$, 
obvious examples of representations with non-expanding cusp monodromy are 
all unitary representations, or more generally, all representations that are 
unitary in the cusps. We provide more examples, and in particular also non-obvious ones, in what follows.

With Proposition~\ref{restric_represent} below we will show that every representation of 
$\Gamma$ that is the restriction of a finite-dimensional representation of 
$\SL_2(\R)$ has non-expanding cusp monodromy. This result is complemented by Remark~\ref{rem:exists} below that proves that this construction indeed yields infinitely many representations of $\Gamma$.

In Proposition~\ref{prop:NECMnontriv} below we will show that for every geometrically finite Fuchsian group 
$\Gamma$ with at least one parabolic element, the class of finite-dimensional representations with non-expanding cusp monodromy 
contains more elements than provided by Proposition~\ref{restric_represent} and 
the obvious examples from above. More precisely, we will construct a finite-dimensional representation of $\Gamma$ with non-expanding cusp monodromy that is neither unitary in the cusps nor a representation from Proposition \ref{restric_represent} nor a direct sum of representations of smaller dimensions. 

Finally, Proposition~\ref{induced_repr}  below will show for all subgroups $\Lambda$ of 
$\Gamma$ of finite index that all finite-dimensional representations with 
non-expanding cusp monodromy of $\Lambda$ induce to finite-dimensional representations with 
non-expanding cusp monodromy of $\Gamma$.

\begin{prop}\label{restric_represent}
Let $\chi\colon \SL_2(\R) \to \GL(V)$ be a finite-dimensional representation of 
$\SL_2(\R)$ and let $\Gamma$ be a Fuchsian group. If $\chi$ descends to a 
representation of $\PSL_2(\R)$ then 
the restriction $\chi_\Gamma$ of $\chi$ to $\Gamma$ has non-expanding cusp 
monodromy.
\end{prop}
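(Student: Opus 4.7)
The plan is to prove the stronger statement that, under the hypotheses of the proposition, $\chi_\Gamma(p)$ is in fact \emph{unipotent} for every parabolic $p\in\Gamma$; this immediately yields that every eigenvalue equals $1$ and hence has absolute value $1$. Since $p\in\Gamma\subseteq\PSL_2(\R)$ is parabolic, it is conjugate in $\PSL_2(\R)$ to the class of $n_t \sceq \mat{1}{t}{0}{1}$ for some $t\in\R\setminus\{0\}$. As $\chi$ factors through $\PSL_2(\R)$, this conjugation descends to a conjugation of $\chi_\Gamma(p)$ to $\chi(n_t)$ inside $\GL(V)$; in particular, the two endomorphisms have the same spectrum, so it suffices to show that $\chi(n_t)$ is unipotent.

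To achieve this I would pass to the Lie algebra: write $n_t = \exp(tX)$ with $X\sceq\mat{0}{1}{0}{0}\in\fsl_2(\R)$, and let $d\chi\colon \fsl_2(\R)\to\End(V)$ denote the differential of $\chi$. Then $\chi(n_t) = \exp\bigl(t\, d\chi(X)\bigr)$, so it is enough to show that $d\chi(X)$ is nilpotent. This is a consequence of the fact that any finite-dimensional representation of a semisimple Lie algebra preserves the abstract Jordan decomposition; in particular, the image of a nilpotent element is nilpotent. Since $X$ is nilpotent, so is $d\chi(X)$, and therefore $\chi(n_t)$ is unipotent, as required.

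Alternatively, one can sidestep the preservation-of-Jordan-decomposition theorem by invoking the explicit classification of finite-dimensional complex representations of $\SL_2(\R)$ as direct sums of the symmetric powers $\Sym^k(\C^2)$, $k\in\N_0$; on each such summand, $n_t$ acts as the $k$-fold symmetric power of the upper unitriangular matrix $n_t$, which is again upper unitriangular with $1$'s on the diagonal. Either argument delivers the same conclusion.

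The only point that demands a brief verification---but not any serious difficulty---is the first reduction: one must ensure that the conjugation of $p$ to (the class of) $n_t$ can be performed inside $\PSL_2(\R)$, so that $\chi$ may legitimately be applied to the conjugating element; this is standard hyperbolic geometry. Beyond that, the proposition reduces to an elementary consequence of the structure theory of $\fsl_2(\R)$-representations, and no genuine obstacle is anticipated.
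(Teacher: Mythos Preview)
Your argument is correct. The reduction to $n_t$ via conjugation in $\PSL_2(\R)$ is exactly the right first step (and the paper makes the same reduction, implicitly, by only checking $p=\textbmat{1}{1}{0}{1}$). Your main line---passing to the Lie algebra, writing $n_t=\exp(tX)$ with $X$ nilpotent, and invoking preservation of the Jordan decomposition for finite-dimensional representations of semisimple Lie algebras---is a clean, conceptual route that bypasses any explicit computation.

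The paper takes your ``alternative'' route: it reduces to irreducibles, uses the concrete model of $\chi_n$ on homogeneous polynomials of degree $n$, and verifies by hand that in the monomial basis $e_j(x,y)=x^jy^{n-j}$ the operator $\chi_n(p)$ is unitriangular, hence has $1$ as its only eigenvalue. So both proofs in fact establish the stronger unipotency claim you flagged. Your Lie-theoretic argument has the advantage of avoiding the classification and the basis computation altogether (and makes transparent that the hypothesis ``$\chi$ descends to $\PSL_2(\R)$'' is used only to make the conjugation step legitimate, not for the spectral conclusion); the paper's computation, on the other hand, is entirely self-contained and does not appeal to structure theory. Either is perfectly adequate here.
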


\begin{proof}
Since every finite-dimensional representation of $\SL_2(\R)$ decomposes into a direct sum of irreducible representations (see \cite[Corollary~2.2]{Knapp}),
it suffices to prove the statement for irreducible representations. As 
it is well known (see, e.\,g., \cite[(2.1), Proposition 2.1 (a) and (c), Remark 
after Proposition 2.1 and the first line in the proof of Proposition 
2.1]{Knapp}), up to equivalence, the irreducible finite-dimensional 
representations of $\SL_2(\R)$ are parametrized by non-negative integers. The 
representation $(V_n,\chi_n)$ with parameter $n\in \N_0$ can be realized on the 
complex vector space $V_n$ of the polynomials in two complex variables $x$ and 
$y$ that are homogeneous of degree $n$. The action is then given by 
\begin{equation}\label{chi_n_repres}
\big(\chi_n (g) f \big)\left( \begin{smallmatrix} x  \\  y \end{smallmatrix}  
\right) = f\big(  g^{-1} \left( \begin{smallmatrix} x  \\  y \end{smallmatrix}  
\right) \big), 
\end{equation}
where $g\in \SL_2(\R)$ and $f\in V_n$. 

In order to show that $\chi_n$ has non-expanding cusp monodromy, it is 
sufficient to determine the eigenvalues of $\chi(p)$ for the parabolic element 
$p=\textmat{1}{1}{0}{1}$. A basis for $V_n$ is given by the $n+1$ monomials 
$e_0,\ldots, e_n$ of degree $n$ in $2$ variables:
\[
 e_j\left( \begin{smallmatrix} x  \\  y \end{smallmatrix}  \right)\coloneqq x^j 
y^{n-j}, \quad  j = 0, \ldots, n.
\]
A straightforward calculation shows
\begin{equation}\label{chin_in_basis}
 \chi_n(p)e_j = e_j + \sum_{k=1}^j {j\choose k} e_{j-k}
\end{equation}
for all $j\in\{0,1,\ldots, n\}$, which immediately implies (e.\,g., by 
representing $\chi_n(p)$ with respect to the basis $\{e_0, \ldots, e_n\}$) that 
$1$ is the only eigenvalue of $\chi_n(p)$. Hence $\chi_n$ has non-expanding cusp 
monodromy.
\end{proof}

\begin{remark}\label{rem:exists}
Proposition~\ref{restric_represent} indeed yields finite-dimensional 
representations with non-expanding cusp monodromy of Fuchsian groups. As soon as the 
parameter $n$ of the representation $(V_n,\chi_n)$ (notation as in the proof of 
Proposition~\ref{restric_represent}) is even, \eqref{chi_n_repres} implies that 
\[
 \chi_n\left( \mat{-1}{0}{0}{-1}\right)
\]
acts trivially on all elements of $V_n$. Hence $\chi_n$ descends to a 
representation of $\PSL_2(\R)$.

We remark that, by the same reasoning, the representations $(V_n,\chi_n)$ with 
odd parameter $n$ do not descend to $\PSL_2(\R)$.
\end{remark}

For the next class of examples of representations with non-expanding cusp monodromies we recall the signature of Fuchsian groups and its associated presentations. For details we refer to  \cite[\S 10.4]{Beardon}.

Let $\Gamma$ be a non-elementary geometrically finite Fuchsian group, and let $g$ be the genus of $\Gamma \backslash \h$. Further let $r$ be the number of conjugacy classes of maximal elliptic cyclic subgroups of $\Gamma$ (or, equivalently, the number of conical points of $\Gamma\backslash\h$), and suppose that the orders of these subgroups are $m_1, \ldots, m_r$, respectively. Let $s$ be the number of conjugacy classes of maximal parabolic cyclic subgroups in $\Gamma$ (or, equivalently, the number of cusps of $\Gamma\backslash\h$), and let $t$ be the number of conjugacy classes of maximal boundary hyperbolic cyclic subgroups (or, equivalently, the number of funnels of $\Gamma\backslash\h$). The \emph{signature} of $\Gamma$ is then the tuple 
\begin{equation}\label{eq:signature}
(g;m_1,\ldots ,m_r;s;t).
\end{equation}
Each parameter is a non-negative integer, and $m_j \geq 2$ for all $j\in\{1,\ldots, r\}$. If $\Gamma$ does not have elliptic elements, then we write $(g; 0; s; t)$.

For any $g,r,s,t\in\N$ and any $m_j\in\N_{\geq 2}$ for $j\in\{1,\ldots, r\}$, there exists a non-elementary geometrically finite Fuchsian group $\Gamma$ with signature $(g;m_1,\ldots ,m_r;s;t)$ if and only if 
\[
 2g -2 + s + t + \sum_{j=1}^r \left( 1 - \frac1{m_j}\right)> 0.
\]
See \cite[Theorem~10.4.2]{Beardon}.

If $\Gamma$ is a geometrically finite Fuchsian group with signature $(g;m_1,\ldots ,m_r;s;t)$, then $\Gamma$ has a presentation of the form
\begin{align*}
  \Gamma = \Big\langle a_1,b_1,\ldots, a_g,b_g, e_1,\ldots, e_r, &\, p_1,\ldots, p_s, h_1,\ldots,h_t \ \Big\vert\  e_1^{m_1} = \ldots = e_r^{m_r} 
  \\
  & = \big(\prod_{j=1}^g [a_j,b_j]\big) e_1\cdots e_r p_1\cdots p_s h_1\cdots h_t = 1  \Big\rangle,
\end{align*}
where the elements $a_1,b_1,\ldots, a_g,b_g$ are hyperbolic (being responsible for the genus of $\Gamma$), $e_1,\ldots, e_r$ are elliptic (generators of the maximal elliptic cyclic subgroups), $p_1,\ldots, p_s$ are parabolic (generators of the maximal parabolic subgroups), $h_1,\ldots, h_t$ are hyperbolic elements (generators of the maximal boundary hyperbolic cyclic subgroups).

\begin{prop}\label{prop:NECMnontriv}
Let $\Gamma$ be a geometrically finite Fuchsian group with at least one parabolic element. Then there exists a finite-dimensional representation of~$\Gamma$ with non-expanding cusp monodromy that is neither unitary at cusps nor the restriction of a representation of $\SL_2(\R)$ nor decomposes into a direct sum of representations of smaller dimensions.
\end{prop}

Throughout the proof we will use `direct sum' to mean a `direct sum of representations of smaller dimensions.' 

\begin{proof}
Let $\Gamma$ be a geometrically finite non-uniform Fuchsian group with at least one parabolic element, and suppose that the signature of $\Gamma$ is $(g; m_1,\ldots, m_r; s; t)$. The presence of a parabolic element in $\Gamma$ shows that $s\geq 1$. Further, the signature of $\Gamma$ has to satisfy at least one of the following properties:
\vspace*{-2mm}
\begin{center}
\begin{minipage}{7cm}
\begin{enumerate}[(a)]
\item\label{Gz} $g\geq 1$,
\item\label{Ga} $r\geq 1$ and $m_j \neq 3$ for some $j\in\{1,\ldots, r\}$,
\item\label{Ga3} $r\geq 1$ and $m_j = 3$ for some $j\in\{1,\ldots, r\}$,
 \item\label{Gb} $s\geq 2$,
 \item\label{Gc} $t\geq 1$, or
\item\label{Gcylinder} the signature of $\Gamma$ is $(0;0;1;0)$.
\end{enumerate}
\end{minipage}
\end{center}

To prove the proposition we will construct an (explicit) example of a representation
\[
 \varrho\colon\Gamma\to \GL(V)
\]
on some finite-dimensional vector space $V$ such that $\varrho$ has non-expanding cusp monodromy but is neither unitary at cusps nor the restriction of a representation of $\SL_2(\R)$ nor decomposes into a direct sum. The construction of $\varrho$ will depend on whether $\Gamma$ satisfies \eqref{Gz}, \eqref{Ga}, \eqref{Ga3},  \eqref{Gb}, \eqref{Gc} or \eqref{Gcylinder}. (In case that $\Gamma$ satisfies more than one of these properties, our construction yields more than one example.)

We first discuss the case that $\Gamma$ satisfies~\eqref{Gcylinder}. Then $\Gamma$ is freely generated by a single parabolic element, say $\Gamma = \langle p \rangle$. The representation 
\[
 \varrho\colon\Gamma \to \GL_2(\R),
\]
determined by 
\[
 \varrho(p) \coloneqq \mat{1}{1}{0}{1},
\]
has non-expanding cusp monodromy but is not unitary (at the cusp). Since irreducible two-dimensional representations of $\SL_2(\R)$ do not restrict to $\PSL_2(\R)$ and hence \emph{a fortiori} not to $\Gamma$ (see Remark~\ref{rem:exists}), the representation $\varrho$ is not a restriction of an irreducible representation of $\SL_2(\R)$. Further, $\varrho$ is not a direct sum of two one-dimensional representations because $\varrho(p)$ does not diagonalize. \emph{A fortiori,} $\varrho$ is not the restriction of a representation of $\SL_2(\R)$. Hence, $\varrho$ is a representation of $\Gamma$ of the form  claimed. 
 
We now suppose that $\Gamma$ satisfies any of the properties~\eqref{Gz}-\eqref{Gc}. Then let 
\begin{align*}
  \Gamma = \Big\langle a_1,b_1,\ldots, a_g,b_g, e_1,\ldots, e_r, &\, p_1,\ldots, p_s, h_1,\ldots,h_t \ \Big\vert\  e_1^{m_1} = \ldots = e_r^{m_r} 
  \\
  & = \big(\prod_{j=1}^g [a_j,b_j]\big) e_1\cdots e_r p_1\cdots p_s h_1\cdots h_t = 1  \Big\rangle
\end{align*}
be a presentation of $\Gamma$ that realizes its signature. Thus, the elements $a_1,b_1,\ldots, a_g,b_g$ are hyperbolic, $e_1,\ldots, e_r$ are elliptic, $p_1,\ldots, p_s$ are parabolic, $h_1,\ldots, h_t$ are boundary hyperbolic elements. Throughout we assume without loss of generality that 
\[
 p_1 = \bmat{1}{1}{0}{1}.
\]
Obviously, it suffices to define $\varrho$ on the set of generators 
\[
\mc G\coloneqq \{a_1,b_1,\ldots, a_g,b_g, e_1,\ldots, e_r, p_1,\ldots, p_s, h_1,\ldots, h_t\}
\]
for $\Gamma$, checking that it obeys all relations in the presentation of $\Gamma$ from above, and then extend it (in the unique possible way) to all of $\Gamma$ enforcing homomorphy.

We suppose first that $\Gamma$ satisfies \eqref{Gz}. In this case, we set $V\coloneqq \C^2$.
We fix $\lambda\coloneqq e^{i \pi / 2}$, $\mu\coloneqq e^{i \pi / 6}$. We  set
\begin{align}
\varrho(a_1) &\coloneqq \mat{\lambda}{1}{0}{\lambda^{-1}},\qquad \varrho(b_1) \coloneqq \mat{\mu}{1}{0}{\mu^{-1}}, \nonumber
\\[2mm]
\varrho(p_1)  &\coloneqq \mat{1}{\lambda\mu\big( \lambda^{-1}-\lambda + \mu-\mu^{-1}\big) }{0}{1} \label{def_chip}
\intertext{and}
\varrho(g) & \coloneqq \id_{\C^2} \quad\text{for all $g\in \mc G\smallsetminus\{a_1,b_1,p_1\}$.} \nonumber
\end{align}
Then the relations in the presentation of $\Gamma$ are satisfied. Extending $\varrho$ to all of $\Gamma$ by enforcing homomorphy then yields a well-defined finite-dimensional representation of $\Gamma$. From~\eqref{def_chip} we read off that $\varrho$ has non-expanding cusp monodromy. Further, since 
\[
\lambda\mu\big( \lambda^{-1}-\lambda + \mu-\mu^{-1}\big) =\frac12\big(i + \sqrt{3}\big)\not=0,
\]
the representation~$\varrho$ is not unitary at cusps. Since $\varrho$ is two-dimensional, it cannot be the restriction of an irreducible representation of $\SL_2(\R)$ (see Remark~\ref{rem:exists}). Further, since $\varrho(p_1)$ is not diagonalizable, $\varrho$ is not the direct sum of two one-dimensional representations. Therefore $\varrho$ is of the type of claimed.

We now suppose that $\Gamma$ satisfies \eqref{Ga}, \eqref{Gb} or \eqref{Gc}. In each case we let $(V_2,\chi_2)$ be the representation of $\SL_2(\R)$ from \eqref{chi_n_repres} with $n=2$ and let $\chi'_2$ be the restriction of $\chi_2$ to $\Gamma$. We set $V\coloneqq V_2$ and will define $\varrho$ by modifying $\chi'_2$ as explained in what follows.

\begin{enumerate}[{\rm (i)}]
\item \label{case i} If $\Gamma$ satisfies \eqref{Ga} then let $\tau$ be an $m_1$-th root of unity such that $\tau^3\not=1$. We set 
\begin{align*}
  \varrho(e_1) &\coloneqq \tau^{-1}\cdot \chi'_2(e_1),\qquad \varrho(p_1)\coloneqq \tau\cdot \chi'_2(p_1),
  \intertext{and}
  \varrho(g) &\coloneqq \chi'_2(g)\qquad\text{for $g\in\mc G\smallsetminus\{e_1,p_1\}$.}
\end{align*}
\item \label{case ii} If $\Gamma$ satisfies \eqref{Gb} then we pick $\tau\in\C$ such that $|\tau|=1$ and $\tau^3\not=1$. We set
\begin{align*}
  \varrho(p_1) &\coloneqq \tau\cdot \chi'_2(p_1),\qquad \varrho(p_2)\coloneqq \tau^{-1}\cdot \chi'_2(p_2),
  \intertext{and}
  \varrho(g) &\coloneqq \chi'_2(g)\qquad\text{for $g\in\mc G\smallsetminus\{p_1,p_2\}$.}
\end{align*}
\item \label{case iii} If $\Gamma$ satisfies \eqref{Gc} then we again pick $\tau\in\C$ such that $|\tau|=1$ and $\tau^3\not=1$. We set
\begin{align*}
  \varrho(p_1) &\coloneqq \tau\cdot \chi'_2(p_1),\qquad \varrho(h_1)\coloneqq \tau^{-1}\cdot \chi'_2(h_1),
  \intertext{and}
  \varrho(g) &\coloneqq \chi'_2(g)\qquad\text{for $g\in\mc G\smallsetminus\{p_1,h_1\}$.}
\end{align*}
\end{enumerate}
In each case, one easily checks that the relations in the presentation of $\Gamma$ are satisfied. Further, \eqref{chin_in_basis} and $\varrho(p_1) = \tau \chi_2'(p_1)$ imply that $\varrho(p_1)$ is not unitary. Hence $\varrho$ is not unitary at cusps. Since $\varrho(p_1)$ consists of a single Jordan block (see~\eqref{chin_in_basis}), $\varrho$ does not decompose into a direct sum. 

Finally, if $\varrho$ was the restriction of an irreducible representation of $\SL_2(\R)$, then $\varrho$ would be equivalent to $\chi'_2$. Then there would exist an intertwiner $\phi\colon V_2 \to V_2$ such that $\phi\circ\varrho = \chi_2'\circ\phi$. This would imply 
\[
1=\det(\chi'_2(p_1)) = \det(\varrho(p_1))=\tau^3 \det(\chi'_2(p_1))= \tau^3,
\]
which contradicts $\tau^3 \neq  1$.

If $\Gamma$ satisfies \eqref{Ga3} then we perform a similar construction but start with the representation $(V_4,\chi_4)$ of $\SL_2(\R)$ (from~\eqref{chi_n_repres}). Let $\chi'_4$ denote its restriction to $\Gamma$, and let $\tau$ be a third root of unity such that $\tau^5\not=1$. We set 
\begin{align*}
\varrho(e_1) &\coloneqq \tau^{-1}\cdot \chi'_4(e_1),\qquad \varrho(p_1)\coloneqq \tau\cdot \chi'_4(p_1),
\intertext{and}
\varrho(g) &\coloneqq \chi'_4(g)\qquad\text{for $g\in\mc G\smallsetminus\{e_1,p_1\}$.}
\end{align*}
It follows that $\varrho$ is not unitary at cusps. Since $\varrho(p_1)$ again consists of a single Jordan block, it does not decompose into a direct sum. Further, if $\varrho$ was a restriction of an irreducible representation of $\SL_2(\R)$, then it would be equivalent to $\chi'_4$ and hence, analogously, to above
\[
1=\det(\chi'_4(p_1)) = \det(\varrho(p_1)) = \tau^{5} \det(\chi'_4(p_1)) = \tau^5,
\]
which is a contradiction. This completes the proof.
\end{proof}

We end this section with the proof that for representations the property of non-expanding cusp monodromy is stable under induction.

\begin{prop}\label{induced_repr}
Let $\Gamma$ be a Fuchsian group, $\Lambda$ a subgroup of $\Gamma$ of finite 
index and $\eta\colon\Lambda\to \GL(V)$ a finite-dimensional representation with 
non-expanding cusp monodromy. Then the induced representation 
$\chi\coloneqq\Ind_\Lambda^\Gamma\eta$ of $\Gamma$ has non-expanding cusp 
monodromy. 
\end{prop}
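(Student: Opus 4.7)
The plan is to use the standard matrix realization of $\chi=\Ind_\Lambda^\Gamma\eta$ via a system of coset representatives and reduce the eigenvalue analysis of $\chi(p)$ to the already-assumed property of $\eta$ on parabolic elements of $\Lambda$.

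First I would fix representatives $g_1,\ldots,g_n$ of $\Lambda\backslash\Gamma$ and identify the representation space $W$ of $\chi$ with $V^n$ in such a way that, for each $g\in\Gamma$, the operator $\chi(g)$ acts by a block-permutation matrix: there is a permutation $\sigma_g$ of $\{1,\ldots,n\}$ and elements $\lambda_i(g)\in\Lambda$, determined by $g_i g=\lambda_i(g)g_{\sigma_g(i)}$, such that $\chi(g)$ sends $(v_1,\ldots,v_n)$ to the vector whose $i$-th entry is $\eta(\lambda_i(g))v_{\sigma_g(i)}$.

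Next I would fix a parabolic element $p\in\Gamma$ and analyze $\chi(p)$ orbit by orbit under $\sigma_p$. For an orbit $\{i_1,\ldots,i_m\}$ with $\sigma_p(i_j)=i_{j+1}$ (indices mod $m$), the restriction $A$ of $\chi(p)$ to the corresponding $V^m$-summand has the block-cyclic form
\[
A=\begin{pmatrix} 0 & \eta(\lambda_{i_1}(p)) & & \\ & 0 & \ddots & \\ & & \ddots & \eta(\lambda_{i_{m-1}}(p))\\ \eta(\lambda_{i_m}(p)) & & & 0 \end{pmatrix}.
\]
A direct telescoping of $g_{i_1}p^m$ through the defining relations $g_{i_j}p=\lambda_{i_j}(p)g_{i_{j+1}}$ shows
\[
g_{i_1}p^m g_{i_1}^{-1}=\lambda_{i_1}(p)\lambda_{i_2}(p)\cdots\lambda_{i_m}(p)\in\Lambda,
\]
and this element, being conjugate in $\Gamma$ to the parabolic $p^m$, is itself parabolic in $\Lambda$.

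Then I would compute $A^m$: it is block-diagonal, with each diagonal block being a cyclic permutation of the product $\eta(\lambda_{i_1}(p))\cdots\eta(\lambda_{i_m}(p))=\eta(g_{i_1}p^m g_{i_1}^{-1})$, and hence conjugate to it. By the non-expanding cusp monodromy of $\eta$ applied to the parabolic element $g_{i_1}p^m g_{i_1}^{-1}\in\Lambda$, every eigenvalue of $A^m$ has absolute value $1$. Since the spectrum of $A$ consists precisely of the $m$-th roots of the eigenvalues of $A^m$ (standard block-companion spectral computation), the eigenvalues of $A$ all have absolute value $1$ as well. Summing over the $\sigma_p$-orbits yields the same conclusion for $\chi(p)$ itself.

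There is no genuine obstacle here; the only point requiring attention is bookkeeping, namely ensuring that the conventions for $\Ind_\Lambda^\Gamma$ and the associated permutation action are consistent, and that the orbit of length $m=1$ is handled correctly (in which case the claim reduces directly to $\eta(g_i p g_i^{-1})$ being parabolic in $\Lambda$, so the hypothesis on $\eta$ applies verbatim). The finiteness of $[\Gamma:\Lambda]$ is used exactly to guarantee that each orbit length $m$ is finite, so that $p^m$ (and its $\Lambda$-conjugate) is still a bona fide parabolic element to which the hypothesis on $\eta$ can be applied.
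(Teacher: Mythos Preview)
Your argument is correct and follows essentially the same route as the paper's proof: realize $\chi=\Ind_\Lambda^\Gamma\eta$ as a block-permutation matrix with respect to coset representatives, raise $\chi(p)$ to a suitable power to make it block-diagonal, observe that each diagonal block is $\eta$ applied to a $\Gamma$-conjugate of a power of $p$ lying in $\Lambda$ (hence parabolic), and then pull the modulus-$1$ eigenvalue conclusion back to $\chi(p)$. The only cosmetic difference is that you work orbit by orbit under $\sigma_p$ and use the cycle length $m$ for each orbit, whereas the paper takes a single global exponent $k$ (the order of the permutation, bounded by $n!$) so that $\chi(p)^k=\diag\big(\eta(h_1p^kh_1^{-1}),\ldots,\eta(h_np^kh_n^{-1})\big)$ all at once.
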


\begin{proof}
Let $\wt\eta\colon \Gamma\to \End(V)$ be defined by
\[
 \wt\eta(g) \coloneqq
 \begin{cases}
  \eta(g) & \text{if $g\in\Lambda$} 
  \\
  0 & \text{otherwise.}
 \end{cases}
\]
Let $n\coloneqq [\Gamma:\Lambda]$, and let $R=\{h_1,\ldots, h_n\}$ be a complete set 
of representatives in $\Gamma$ for the right cosets $\Lambda\backslash\Gamma$ 
(thus, $\Gamma=\bigcup_{j=1}^n \Lambda h_j$ as a disjoint union). Then $\chi$ is 
(equivalent to) the representation on $V^n\cong \bigoplus_{j=1}^n V$ given by
\begin{equation}\label{model_indrep}
 \chi(g) = \Big( \wt\eta\big(h_igh_j^{-1}\big) \Big)_{i,j=1,\ldots, n} \qquad 
(g\in\Gamma).
\end{equation}
Let $p\in\Gamma$ be parabolic. Since $R$ is a complete set of representatives 
for $\Lambda\backslash\Gamma$, for each $i_0\in \{1,\ldots, n\}$ there exists 
exactly one $j_0\in\{1,\ldots, n\}$, and for each $j_0\in\{1,\ldots, n\}$ there 
exists exactly one $i_0\in\{1,\ldots,n\}$ such that 
$h_{i_0}ph_{j_0}^{-1}\in\Lambda$. In other words, in the presentation 
\eqref{model_indrep}, the endomorphism $\chi(p)$ is given by a `permutation  
matrix' where instead of the entry $1$ (at position $(i,j)$) we have 
$\eta\big(h_iph_j^{-1}\big)$. It follows that there exists $k\in\N$ (indeed $k\leq n!$) such that 
\[
 \chi\big(p^{k}\big) = \chi(p)^k = \diag\Big(\eta(h_1p^{k}h_1^{-1}\big),\ldots, 
\eta\big(h_np^{k}h_n^{-1}\big) \Big).
\]
Since all eigenvalues of the endomorphisms $\eta(h_jp^{k}h_j^{-1})$, 
$j=1,\ldots, n$, are of absolute value $1$, so are all eigenvalues of 
$\chi(p)^{k}$, and hence all eigenvalues of $\chi(p)$. Thus, $\chi$ has 
non-expanding cusp monodromy.
\end{proof}

\section{No convergence beyond non-expanding cusp monodromy}

In the preceding sections we showed that if $\chi$ is a representation with non-expanding cusp monodromy, then the infinite product~\eqref{pre_def} used for the definition of the $\chi$-twisted Selberg zeta function converges absolutely in a half space, and has a meromorphic continuation to all of $\C$. In this section we will discuss the natural question if these results hold as well for some representations $\chi$ not having non-expanding cusp monodromy. 

The infinite product~\eqref{pre_def} contains the product over the primitive periodic geodesics or, equivalently, the $\Gamma$-conjugacy classes of the primitive hyperbolic elements in~$\Gamma$, which does not have a natural order. Therefore, absolute convergence is arguably the most natural notion of convergence when considering \eqref{pre_def}.

We will show that as soon as $\chi$ does not have non-expanding cusp monodromy and the infinite product~\eqref{pre_def} is not void then the latter does not converge absolutely, and hence the notion of twisted Selberg zeta function has no meaning (within the realms of absolute convergence). This observation shows that the property of non-expanding cusp monodromy is indeed the weakest requirement on twisting representations we can allow for a theory of Selberg zeta functions.

\begin{prop}\label{prop:thm_sharp}
Let $\Gamma$ be a geometrically finite Fuchsian group with at least one hyperbolic element, and let $\chi\colon\Gamma\to\GL(V)$ be a finite-dimensional representation of $\Gamma$ that does not have non-expanding cusp monodromy. Then the infinite product~\eqref{pre_def} does not converge absolutely for any $s\in\C$.
\end{prop}

\begin{proof}
Since $\chi$ does not have non-expanding cusp monodromy, there exists a parabolic element $p\in\Gamma$ such that not all eigenvalues of $\chi(p)$ are of absolute value $1$. Without loss of generality (by possibly conjugating $\Gamma$, and taking the inverse of $p$) we may assume that 
\[
 p=\bmat{1}{1}{0}{1}
\]
and that $\chi(p)$ has an eigenvalue $\lambda$ with $|\lambda|>1$. Further, for readibility, we assume that $\chi(p)$ has a single Jordan block. The general case of several Jordan blocks is easily handled by a straightforward extension of the argumentation that follows.

By hypothesis, $\Gamma$ has at least one hyperbolic element, hence the associated hyperbolic surface $X=\Gamma\backslash\h$ has at least one (primitive) periodic geodesic, and the infinite product \eqref{pre_def} is non-empty. Let 
\[
 h=\bmat{a}{b}{c}{d}
\]
be a hyperbolic element in $\Gamma$. In what follows we will show that $(hp^n)_{n\in\N}$ contains a subsequence of hyperbolic elements such that even if the infinite product \eqref{pre_def} is restricted to these hyperbolic elements (or rather its underlying primitive elements), it does not converge. Geometrically, it means that we restrict the considerations to a periodic geodesic winding arbitrarily often around the cusp associated to $p$. The weights that $\chi$ assigns to these geodesics are so large that convergence of \eqref{pre_def} is prevented.

By the hypothesis above on~$p$, the point~$\infty$ is cuspidal and hence the element~$c$ in~$h$ does not vanish. A straightforward calculation shows that for all $n\in\N$ we have
\[
 \tr\big(hp^n\big) = \tr h + cn.
\]
This shows that there exists $n_0\in\N$ such that the elements of~$\{hp^n \mid n\geq n_0\}$ are all hyperbolic and pairwise non-conjugate in~$\Gamma$.

As discussed in Section~\ref{sec:conv}, if the infinite product \eqref{pre_def} converges absolutely for $s\in\C$ then 
\[
 \sum_{[g]\in [\Gamma]_\hyp} \frac{1}{m(g)} \frac{N(g)^{-s}}{1-N(g)^{-1}} \tr\chi(g)
\]
converges absolutely. In particular, for $n_0\in\N$ sufficiently large, the sum
\begin{equation}\label{rest_sum}
 \Sigma(s) \coloneqq \sum_{n\geq n_0} \frac{1}{m(hp^n)} \frac{N\big(hp^n\big)^{-\Rea s}}{1-N\big(hp^n\big)^{-1}} |\tr\chi\big(hp^n\big)|
\end{equation}
converges for these $s\in\C$. In what follows we show that the sum \eqref{rest_sum} does not converge. 

To that end, we start by estimating $N\big(hp^n\big)$ and $m\big(hp^n\big)$ for $n\in\N$. The eigenvalues of $hp^n$ are 
\[
 \mu_\pm = \frac{a+d+cn \pm \sqrt{(a+d+cn)^2 - 4}}{2}.
\]
Thus (see \eqref{norm_h})
\begin{equation}\label{est_norm}
 N\big(hp^n\big) \asymp n^2
\end{equation}
with implied constants independent of $n$. Let 
\[
 \eps_0 \coloneqq \min\{ N(h_0) \mid \text{$h_0\in\Gamma$ hyperbolic} \}.
\]
We note that this minimum indeed exists. Suppose that $h_n$ is the primitive hyperbolic element underlying $ph^n$, and let $m\coloneqq m\big(hp^n\big)$. Hence $h_n^{m} = hp^n$ and therefore
\[
\eps_0^m \leq N(h_n)^m = N\big(h_n^m\big) = N\big(hp^n\big) \asymp n^2.
\]
It follows that 
\begin{equation}\label{est_level}
 m\big(hp^n\big) = m \ll \frac1{\log n}
\end{equation}
with implied constant independent of $n$. Now let $d\coloneqq\dim V$ and note that there exist $a_0,\ldots, a_{d-1}\in\C$ such that for all $n\in\N$ sufficiently large we have
\begin{equation}\label{est_tr}
\tr\chi\big(hp^n\big) = \lambda^n\left( a_0 + a_1 n +\ldots + a_{d-1}n^{d-1} \right)  \gg n^{d-1} |\lambda|^n 
\end{equation} 
with implied constant independent of $n$. Using \eqref{est_norm}-\eqref{est_tr} in \eqref{rest_sum} we find that 
\begin{align*}
 \Sigma(s) &\gg \sum_{n\geq n_0} \frac{1}{\log n} \frac{n^{-2\Rea s}}{1-n^{-2}} n^{d-1} |\lambda|^n
 \\
 & \gg \sum_{n\geq n_0} n^k |\lambda|^n
\end{align*}
for some $k\in\R$. Since $|\lambda|>1$, the last sum diverges. This completes the proof.

\end{proof}

\section{Venkov--Zograf factorization formulas}\label{sec:factor}

Venkov and Zograf \cite{Venkov_Zograf} showed that Selberg zeta functions for 
geometrically finite Fuchsian groups twisted with unitary finite-dimensional 
representations satisfy certain factorization formulas. The purpose of this 
section is to provide, with Theorem~\ref{thm:factor} below, these factorization 
formulas for twists with non-expanding cusp monodromy.

To be precise, the proof of the factorization formulas in \cite{Venkov_Zograf} takes advantage of the Selberg trace formula, which restricts the proof method from \cite{Venkov_Zograf} to cofinite Fuchsian groups and unitary representations. However, the 
simplification (which essentially consists in omitting some unnecessary steps) 
sketched in \cite[Theorem~7.2]{Venkov_book2} applies without changes to 
non-cofinite Fuchsian groups as well, even though 
\cite[Theorem~7.2]{Venkov_book2} is stated for cofinite Fuchsian groups only. 

In combination with Proposition~\ref{induced_repr} the latter proof applies also 
to all finite-dimensional representations with non-expanding cusp monodromy, 
again without any changes. For the convenience of the reader we provide a 
complete proof.

\begin{thm}\label{thm:factor}
Let $\Gamma$ be a geometrically finite Fuchsian group, and $\Lambda$ a subgroup 
of $\Gamma$ of finite index. Let $\chi\colon \Gamma\to \GL(V)$ be a 
finite-dimensional representation with non-expanding cusp monodromy. 
\begin{enumerate}[{\rm (i)}]
\item\label{factori} If $\chi=\bigoplus\limits_{j=1}^m\chi_j$ decomposes into a 
direct (finite) sum of representations $\chi_j$ of $\Gamma$ with non-expanding 
cusp monodromy, then 
\[
 Z(s,\Gamma,\chi) = \prod_{j=1}^m Z(s,\Gamma,\chi_j).
\]
\item\label{factorii} If $\eta\colon\Lambda\to \GL(V)$ is a representation with 
non-expanding cusp monodromy, then 
\[
 Z(s,\Lambda,\eta) = Z(s,\Gamma,\Ind_\Lambda^\Gamma\eta).
\]
Recall from Proposition~\ref{induced_repr} that $\Ind_\Lambda^\Gamma\eta$ has 
indeed non-expanding cusp monodromy.
\item\label{factoriii} If $\Lambda$ is normal in $\Gamma$, then 
\[
Z(s,\Lambda,{\bf 1}) = \prod_{\eta\in \wh{(\Lambda\backslash\Gamma)}} 
Z(s,\Gamma,\eta)^{\dim\eta}, 
\]
where $\wh{(\Lambda\backslash\Gamma)}$ denotes the unitary dual of 
$\Lambda\backslash\Gamma$.
\end{enumerate}
\end{thm}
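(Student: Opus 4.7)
The plan is to work in a right half-plane where both sides of each identity converge absolutely via Theorem~A (and Proposition~\ref{induced_repr} for~\eqref{factorii}), establish the corresponding identities of logarithmic series using~\eqref{logszf}, and then invoke uniqueness of meromorphic continuation (provided, for instance, by Theorem~B for Fuchsian groups with a strict transfer operator approach, but the identities themselves only require convergence, which Theorem~A supplies).

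For part~\eqref{factori}, the argument is essentially bookkeeping. Since $\chi=\bigoplus_{j=1}^m\chi_j$, for every $g\in\Gamma$ the matrix of $\chi(g)$ is block diagonal in a suitable basis, so
\[
 \det\bigl(1-\chi(g)N(g)^{-(s+k)}\bigr)=\prod_{j=1}^m\det\bigl(1-\chi_j(g)N(g)^{-(s+k)}\bigr).
\]
Each factor $Z(s,\Gamma,\chi_j)$ converges by Theorem~A on a common right half-plane, and substituting into~\eqref{pre_def} and interchanging products (justified by absolute convergence) yields~\eqref{factori}. Uniqueness of meromorphic continuation extends the identity wherever all factors are defined.

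Part~\eqref{factorii} is the technical core. I would proceed via the logarithmic identity~\eqref{logszf}, which by Proposition~\ref{est_trace} is valid for both $(\Gamma,\chi)$ and $(\Lambda,\eta)$ on a common right half-plane. Using the model~\eqref{model_indrep} of the induced representation, Frobenius' trace formula gives, for every $g\in\Gamma$,
\[
 \tr\chi(g)=\sum_{i\colon h_igh_i^{-1}\in\Lambda}\tr\eta(h_igh_i^{-1}),
\]
where $\{h_1,\dots,h_n\}$ is a system of representatives for $\Lambda\backslash\Gamma$. The key combinatorial step is to verify that under the substitution $g\mapsto h_igh_i^{-1}$ the weighted sum
\[
 \sum_{[g]\in[\Gamma]_\hyp}\frac{1}{m_\Gamma(g)}\frac{N(g)^{-s}}{1-N(g)^{-1}}\tr\chi(g)
\]
reorganizes into the corresponding sum over $[\Lambda]_\hyp$ with weights $1/m_\Lambda(h)$. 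Here one uses the standard bijection: a hyperbolic primitive $g_0\in\Gamma$ has a smallest power $q=q(g_0)\in\N$ with $g_0^q\in\Lambda$, and $g_0^q$ is then primitive in $\Lambda$; the $\Gamma$-conjugates of $g_0^q$ landing in $\Lambda$ partition into $n/q$ distinct $\Lambda$-conjugacy classes (parametrized by $C_\Gamma(g_0)$-orbits on $\{i:h_ig_0^qh_i^{-1}\in\Lambda\}$), because $N$, $\ell$, and $\tr\eta$ are class functions. The factor $1/m_\Gamma(g)$ absorbs precisely the difference between $\Gamma$- and $\Lambda$-primitivity, giving $1/m_\Lambda(h)$ after bookkeeping. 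Matching the log-series term by term yields~\eqref{factorii}. The main obstacle here is precisely this combinatorial identity of weighted sums; all the analytic input has already been provided by Theorem~A and Proposition~\ref{induced_repr}.

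Part~\eqref{factoriii} follows by combining~\eqref{factori} and~\eqref{factorii}. Applying~\eqref{factorii} with $\eta$ the trivial one-dimensional representation $\mathbf{1}_\Lambda$ gives
\[
 Z(s,\Lambda,\mathbf{1})=Z\bigl(s,\Gamma,\Ind_\Lambda^\Gamma\mathbf{1}\bigr).
\]
Since $\Lambda\triangleleft\Gamma$, the induced representation $\Ind_\Lambda^\Gamma\mathbf{1}$ is the pullback along $\Gamma\to\Lambda\backslash\Gamma$ of the regular representation of the finite group $\Lambda\backslash\Gamma$, which by Peter--Weyl decomposes as $\bigoplus_{\eta\in\widehat{(\Lambda\backslash\Gamma)}}\eta^{\dim\eta}$. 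Pulling back to $\Gamma$, each irreducible summand is unitary (being the pullback of a unitary representation of a finite group) and hence has non-expanding cusp monodromy, so~\eqref{factori} applies and produces the claimed product. The only point to check is that all representations appearing in the decomposition lift compatibly to $\Gamma$ via the quotient map, which is immediate from normality of $\Lambda$.
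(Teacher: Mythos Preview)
Your treatment of parts~\eqref{factori} and~\eqref{factoriii} is correct and matches the paper. For part~\eqref{factorii} your overall strategy---Frobenius' trace formula applied to the logarithmic series~\eqref{logszf}---is exactly the paper's, but the combinatorial bookkeeping you sketch is only valid when $\Lambda$ is normal in $\Gamma$.

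The problem is your claim that a $\Gamma$-primitive hyperbolic $g_0$ determines a single exponent $q=q(g_0)$ and that the $\Gamma$-conjugates of $g_0^q$ lying in $\Lambda$ fall into $n/q$ many $\Lambda$-classes. For non-normal $\Lambda$ the condition $h_ig_0^kh_i^{-1}\in\Lambda$ reads $g_0^k\in h_i^{-1}\Lambda h_i$, so the minimal admissible $k$ varies with the coset $\Lambda h_i$. Consequently the $\Lambda$-primitive elements lying over the $\Gamma$-class $[g_0]$ are in general $\Gamma$-conjugate to \emph{different} powers $g_0^{e_1},\dots,g_0^{e_M}$, and neither $M$ nor the $e_j$ need relate to $n$ by the formula $n/q$; your description simply omits the $\Lambda$-primitives with $e_j\neq q(g_0)$. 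The paper handles this by first listing the finitely many $\Lambda$-primitive classes $[q_1],\dots,[q_M]$ over $[g_0]$ together with their individual exponents $e_j$ (invoking \cite[Lemmas~2.1--2.3]{Venkov_Zograf}), and then using the key counting identity
\[
\bigl|\{\,h\in R : q_j\sim_\Lambda h g_0^{e_j}h^{-1}\,\}\bigr|=e_j
\]
from \cite[Lemma~2.1]{Venkov_Zograf}. This identity is precisely what converts the weight $1/k$ on the $\Gamma$-side into the weight $1/m$ on the $\Lambda$-side; your argument recovers it only when all $e_j$ coincide, i.e.\ in the normal case.
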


\begin{proof}
The proof of \eqref{factori} follows immediately from \eqref{pre_def} in the 
domain of convergence of the infinite products and extends to all of $\C$ by 
uniqueness of meromorphic continuations. In the case that $\Gamma$ admits a 
strict transfer operator approach, an alternative transfer operator based proof 
of \eqref{factori} is possible and straightforward since the transfer operators 
decompose according to the representation. We refer to 
\cite[Section~6]{Pohl_representation} for details.

Further, \eqref{factoriii} follows immediately from \eqref{factori} and 
\eqref{factorii} by considering the decomposition of the induced representation 
$\Ind_\Lambda^\Gamma {\bf 1}$ into irreducibles.

It remains to prove \eqref{factorii}. To that end we show that the logarithms of 
the two Selberg zeta functions coincide on the domain  on which both of them are 
given by the infinite products in \eqref{pre_def}. Uniqueness of 
meromorphic continuations then completes the proof. We start with a few 
preparations.

We define, as in the proof of Proposition~\ref{induced_repr}, the map $\wt\eta\colon \Gamma\to \End(V)$ by
\[
 \wt\eta(g) \coloneqq 
 \begin{cases}
  \eta(g) & \text{if $g\in\Lambda$}
  \\
  0 & \text{otherwise.}
 \end{cases}
\]
For $g,h\in\Gamma$ we write $g\sim_\Gamma h$ or $g\sim_\Lambda h$ if 
$g$ and $h$ are $\Gamma$-conjugate or $\Lambda$-conjugate, respectively.

Let $\chi\coloneqq \Ind_\Lambda^\Gamma\eta$, and let $R$ be a complete set of 
representatives for the right cosets $\Lambda\backslash\Gamma$. Thus,
\[
 \Gamma = \bigcup_{h\in R} \Lambda h \qquad \text{(disjoint union)}.
\]
Note that $R$ is finite. For any $g\in\Gamma$ we have 
\begin{equation}\label{Frob}
 \tr\chi(g) = \sum_{h\in R} \tr\wt\eta\big(hgh^{-1}\big)
\end{equation}
by Frobenius formula (also known as Mackey formula). 

Let $p_0\in\Gamma$ be $\Gamma$-primitive hyperbolic, and consider
\[
 \Lambda(p_0) \coloneqq \{ gp_0^kg^{-1} \mid g\in\Gamma,\ k\in\N\} \cap \Lambda.
\]
Then \cite[Lemma~2.3]{Venkov_Zograf} shows that whenever $gp_0^kg^{-1}\in 
\Lambda(p_0)$ for some $g\in\Gamma$, $k\in\N$ then there exists a 
$\Lambda$-primitive element $q_0\in\Lambda$ and a unique $e_0\in\N$ such that 
$q_0\sim_\Gamma p_0^{e_0}$ and $e_0$ divides $k$. Thus, $q_0\in \Lambda(p_0)$. 

By slight abuse of notation, let $[\Lambda(p_0)]_\prim$ denote the set of 
$\Lambda$-conjugacy classes of $\Lambda$-primitive hyperbolic elements in 
$\Lambda(p_0)$. By \cite[Lemmas~2.1-2.2]{Venkov_Zograf},  $[\Lambda(p_0)]_\prim$ 
is finite. Let 
\[
 R(p_0) \coloneqq \{ q_1,\ldots, q_M\} 
\]
be a complete set of representatives of $[\Lambda(p_0)]_\prim$ in $\Lambda$ 
(thus, $q_1,\ldots,q_M$ are $\Lambda$-primitive hyperbolic and pairwise  
non-conjugate in $\Lambda$, and each element of $[\Lambda(p_0)]_\prim$ has a representative in $R(p_0)$). For $j\in \{1,\ldots,M\}$ let $e_j\in\N$ such that 
$q_j\sim_\Gamma p_0^{e_j}$, and let 
\[
 R_j \coloneqq \{ h\in R \mid q_j \sim_\Lambda hp_0^{e_j}h^{-1} \}.
\]
By \cite[Lemma~2.1]{Venkov_Zograf}, $|R_j| = e_j$. It follows that for all 
$k\in\N$ with $e_j\mid k$ we have
\[
 e_j \tr \eta\big(q_j^{k/e_j}\big)= \sum_{h\in R_j} \tr 
\eta\big(hp_0^kh^{-1}\big).
\]
Thus, for all $k\in\N$ it follows
\[
 \sum_{\stackrel{j=1}{e_j\mid k}}^M e_j \tr \eta\big(q_j^{k/{e_j}}\big) = 
\sum_{h\in R} \tr \wt\eta\big(hp_0^kh^{-1}\big)  = \tr\chi\big(p_0^k\big).
\]
Hence
\begin{align*}
\sum_{k=1}^\infty \frac1k \frac{N\big(p_0^k\big)^{-s}}{1-N\big(p_0^k\big)} \tr 
\chi\big(p_0^k\big)
& = \sum_{k=1}^\infty \sum_{\stackrel{j=1}{e_j\mid k}}^M \frac{e_j}{k} 
\frac{N\big(p_0^k\big)^{-s}}{1-N\big(p_0^k\big)} \tr 
\eta\big(q_j^{\frac{k}{e_j}}\big)
\\
& = \sum_{j=1}^M \sum_{m=1}^\infty \frac{e_j}{e_jm} 
\frac{N\big(p_0^{e_jm}\big)^{-s}}{1-N\big(p_0^{e_jm}\big)} 
\tr\eta\big(q_j^m\big)
\\
& = \sum_{j=1}^M \sum_{m=1}^\infty \frac1m  
\frac{N\big(q_j^m\big)^{-s}}{1-N\big(q_j^m\big)^{-1}} \tr\eta(q_j^m).
\end{align*}
Thus, for $\Rea s$ sufficiently large, 
\begin{align*}
\log Z(s,\Gamma,\chi) & = - \sum_{[p_0]\in[\Gamma]_\prim} \sum_{k=1}^\infty 
\frac{1}{k}\frac{N\big(p_0^k\big)^{-s}}{1-N\big(p_0^k\big)^{-1}} 
\tr\chi\big(p_0^k\big)
\\
& = - \sum_{[p_0]\in[\Gamma]_\prim} \sum_{[q]\in[\Lambda(p_0)]_\prim} 
\sum_{m=1}^\infty \frac1m \frac{N\big(q^m\big)^{-s}}{1-N\big(q^m\big)^{-1}} 
\tr\eta\big(q^m\big)
\\
& = -\sum_{[g]\in[\Lambda]_p} \sum_{m=1}^\infty \frac1m 
\frac{N\big(g^m\big)^{-s}}{1-N\big(g^m\big)^{-1}} \tr\eta\big(g^m\big)
\\
& = \log Z(s,\Lambda,\eta).
\end{align*}
This completes the proof. 
\end{proof}

\appendix

\section{Meromorphic continuation of the derivatives of the Lerch transcendent}\label{sec:lerch}

In Section~\ref{sec:merom} we used the Lerch transcendent 
\begin{equation}\label{lerch1}
 \Phi(s,\lambda,w) =\sum_{n=0}^\infty \frac{\lambda^n}{(n+w)^s}
\end{equation}
and its partial derivatives 
\begin{equation}\label{lerch2}
 \Phi_m(s,\lambda,w) =\frac{1}{m!} 
\frac{\partial^m\Phi}{\partial\lambda^m}(s,\lambda,w) = \sum_{n=m}^\infty 
{n\choose m} \frac{\lambda^{n-m}}{(n+w)^s} \qquad (m\in\N_0)
\end{equation}
for the proof of the meromorphic continuability of the transfer operator families. In particular we took advantage of the convergence of $\Phi_m$, $m\in\N_0$, their extensions to meromorphic or even entire functions to all of $\C$ in the variable $s$ for any $w\in\C\smallsetminus(-\N_0)$, and their continuity as a function of the two variables $(s,w)$ for any $\lambda\in\C$, $|\lambda|=1$. Proposition~\ref{prop:lerch} below contains a precise statement of the properties we used. 

For $\Phi = \Phi_0$, proofs of these properties can be found in the literature. For $\Phi_m$, $m>0$, they can be deduced from those of $\Phi$ (at least in the case $\lambda\not=1$) or shown in analogy to those of $\Phi$. For the convenience of the reader, we provide a complete proof of Proposition~\ref{prop:lerch}.

For this proof we take an approach via Mellin transforms and functional equations. A descent reference for this approach is Zagier's paper \cite{Zagier_mellin}.

\begin{prop}\label{prop:lerch}
Let $\lambda\in\C$, $|\lambda|=1$, $m\in\N_0$. Then the infinite sum $\Phi_m(\cdot,\lambda,\cdot)$ in \eqref{lerch2} is convergent on 
\[
 \{s\in\C\mid \Rea s > m+1\} \times \big(\C\smallsetminus(-\N_0)\big),
\]
defines there a continuous function, and for any fixed $w\in\C\smallsetminus(-\N_0)$, the map $\Phi_m(\cdot,\lambda,w)$ is holomorphic on 
\[
\{s\in\C\mid\Rea s > m+1\}.
\] 
Furthermore, $\Phi_m(\cdot,\lambda,\cdot)$ is holomorphic on 
\[
 \{s\in\C\mid \Rea s > m+1\} \times \big(\C\smallsetminus(-\infty,0]\big).
\]
For $\lambda\not=1$, the map $\Phi_m(\cdot,\lambda,\cdot)$ extends to a holomorphic function on 
\[
\C\times\big(\C\smallsetminus(-\infty,0]\big).
\]
For $\lambda=1$, it extends to a meromorphic function on $\C\times\big(\C\smallsetminus(-\infty,0]\big)$. For any fixed $w\in\C\smallsetminus(-\infty,0]$, the poles of $\Phi_m(\cdot, 1,w)$ are contained in $\{1,\ldots, m+1\}$, all poles are simple, and $m+1$ is a pole.

If a continuous extension of the complex logarithm to $\C\smallsetminus (-\N_0)$ is fixed, then the meromorphic extendability in $s$ generalizes to all fixed $w \in \C \smallsetminus (-\N_0)$,  and the meromorphic extension is continuous as a function of the two variables $(s,w)$.
\end{prop}

\begin{proof}
Throughout let $\lambda\in\C$, $|\lambda|=1$, and let $m\in\N_0$. Definitions \eqref{lerch1} and \eqref{lerch2} clearly show that $\Phi_m(\cdot,\lambda,\cdot)$ converges on 
\[
 \mc D_m\coloneqq \{ s\in \C \mid \Rea s > m+1\} \times \big(\C\smallsetminus (-\N_0)\big),
\]
defines a continuous function on this domain, and for any fixed $w\in\C\smallsetminus (-\N_0)$, the map $\Phi_m(\cdot,\lambda,w)$ is holomorphic on $\{ s\in \C \mid \Rea s > m+1\}$. Furthermore, the map $\Phi_m(\cdot,\lambda,\cdot)$ is holomorphic on 
\[
 \{ s\in \C \mid \Rea s > m+1\}\times \big(\C\smallsetminus (-\infty,0]\big).
\]

For the proof of the claimed meromorphic continuability we first note that for all $k\in\N_0$ and all $(s,w)\in \mc D_0$ we have
\[
 \Phi(s,\lambda,w) = \lambda^k \Phi(s,\lambda,w+k) + \sum_{n=0}^{k-1} \frac{\lambda^n}{(n+w)^s}.
\]
Moreover, for every $n\in\N_0$, the map
\[
 \C\times \big(\C\smallsetminus (-\infty,0]\big) \to \C,\quad (s,w)\mapsto \frac{\lambda^n}{(n+w)^s}
\]
is holomorphic. 

Combining this functional equation with the definition \eqref{lerch2} of $\Phi_m$ as a partial derivative of $\Phi$ yields that for all $k\in\N_0$ and all $(s,w)\in\mc D_m$, 
\begin{equation}\label{funceq_lerch}
 \Phi_m(s,\lambda,w) = \sum_{n=0}^m \binom{k}{n}\lambda^{k-n} \Phi_{m-n}(s,\lambda,w+k) + \sum_{n=0}^{k-1} \binom{n}{m} \frac{\lambda^{n-m}}{(n+w)^s}.
\end{equation}
Suppose for a moment that we have already shown (which will be done further below) that for any $v\in\C$ with $\Rea v>0$ and any $p\in\N_0$, the map $\Phi_p(\cdot,\lambda,v)$ extends, for $\lambda\not=1$, to an entire function, and, for $\lambda=1$, to a meromorphic function on all of $\C$ all of whose poles are contained in $\{1,\ldots, p+1\}$, are (at most) simple, and $p+1$ is indeed a pole.  Suppose further that we have already shown that these extensions are meromorphic in the variables $(s,v)$ on 
\[
 \big(\C\smallsetminus\{\text{poles}\}\big)\times \{v\in\C\mid\Rea v > 0 \}.
\]
Applying \eqref{funceq_lerch} with sufficiently large (and varying) $k\in\N_0$ proves the statement of the proposition for $w$ restricted to $\C\smallsetminus (-\infty,0]$. Fixing a continuous extension of the complex logarithm to $\C\smallsetminus (-\N_0)$ then completes the full proof of the proposition.

Thus it remains to establish the proposition for the domain of the variable $w$ restricted to $\Rea w>0$. We start by expressing $\Phi_m$ by a Mellin transform of a certain highly regular function with good decay properties. Throughout let $w\in\C$ with $\Rea w>0$, and $s\in\C$ with $\Rea s>m+1$. The integral representation
\[
 \Gamma(r) = \int_0^\infty u^{r-1}e^{-u}\, du, \qquad \Rea r > 0
\]
of the Gamma function yields for all $n\in\N_0$ that 
\[
 \frac{1}{(n+w)^r} = \frac{1}{\Gamma(r)} \int_0^\infty e^{-t(n+w)}t^{r-1}\, dt
\]
(initially only for $w\in\R_{>0}$ and then by holomorphy and identity theorem also for $w\in\C$, $\Rea w>0$). It follows that (note that $|\lambda e^{-t}|<1$ for $t>0$)
\begin{align*}
\Phi_m(s,\lambda,w) & = \sum_{n=m}^\infty \binom{n}{m} \frac{\lambda^{n-m}}{(n+w)^s} 
\\
& = \frac{1}{\Gamma(s)} \sum_{n=m}^\infty \binom{n}{m} \lambda^{n-m} \int_0^\infty t^{s-1} e^{-t(n+w)}\, dt
\\
& = \frac{1}{\Gamma(s)} \int_0^\infty t^{s-1} e^{-(m+w)t} \sum_{n=m}^\infty \binom{n}{m} \big(\lambda e^{-t}\big)^{n-m}\, dt
\\
& = \frac{1}{\Gamma(s)} \int_0^\infty t^{s-1} \frac{e^{-(w+m)t}}{\big(1-\lambda e^{-t}\big)^{m+1}}\, dt
\\
& = \frac{1}{\Gamma(s)} \int_0^\infty t^{s-1} \frac{e^{-(w-1)t}}{\big(e^t - \lambda\big)^{m+1}}\, dt.
\end{align*}
Let $\varphi_{\lambda,w}\colon\R\to\C$, 
\[
 \varphi_{\lambda,w}(t) \coloneqq \frac{e^{-(w-1)t}}{\big(e^t - \lambda\big)^{m+1}},
\]
and let $\wt\varphi_{\lambda,w}\colon \{s\in\C\mid \Rea s > m+1\}\to\C$,
\[
 \wt\varphi_{\lambda,w}(s) \coloneqq \int_0^\infty t^{s-1} \varphi_{\lambda,w}(t)\, dt
\]
denote the Mellin transform of $\varphi_{\lambda,w}$. Thus
\[
 \Phi_m(s,\lambda, w) = \Gamma(s)^{-1} \wt\varphi_{\lambda,w}(s).
\]
Since $\Rea w > 0$, one easily sees that $\varphi_{\lambda,w}$ is of rapid decay as $t\to \infty$. Moreover, as $t\to 0$, the function $\varphi_{\lambda,w}$ has an asymptotic expansion
\[
 \varphi_{\lambda,w}(t) \sim \sum_{n=-(m+1)}^\infty a_n(\lambda,w) t^n
\]
with suitable coefficients $a_n(\lambda, w)\in\C$. For $\lambda\not=1$, $\varphi_{\lambda,w}$ is smooth at $0$, and therefore
\[
 a_n(\lambda,w) = 0 \quad\text{ if $\lambda\not=1$ and $n\in\{-(m+1),\ldots, -1\}$.}
\]
For $\lambda=1$, $\varphi_{\lambda,w}$ has a pole of order $m+1$ at $0$. 

For any $N\in\Z$, $N>-(m+1)$, we find
\begin{equation}\label{phi_decomp}
\begin{gathered}
\wt\varphi_{\lambda,w}(s) = \int_0^\infty t^{s-1} \left( \varphi_{\lambda,w}(t) - \sum_{n=-(m+1)}^{N-1} a_n(\lambda,w) t^n\right)\, dt + \sum_{n=-(m+1)}^{N-1} \frac{a_n(\lambda,w)}{s+n}
\\
 \quad + \int_1^\infty t^{s-1}\varphi_{\lambda,w}(t)\,dt. 
\end{gathered}\end{equation}
The last integral on the right hand side defines a function in $(s,w)$ that is holomorphic on 
\[
 \C \times \{w\in\C\mid\Rea w > 0\},
\]
the first integral defines a function holomorphic on 
\[
 \{s\in\C\mid \Rea s > m+1-N\} \times \{w\in\C\mid\Rea w > 0\}.
\]
Thus, $(s,w)\mapsto\wt\varphi_{\lambda,w}(s)$ extends to a meromorphic function on 
\[
\{s\in\C\mid \Rea s > m+1-N\} \times \{w\in\C\mid\Rea w > 0\}.
\]
For any fixed $w\in\C$, $\Rea w>0$, the poles of $\wt\varphi_{\lambda,w}$ are contained in
\[
 \{ -(N-1), \ldots, m+1\}.
\]
For $n\in\{-(m+1),\ldots, N-1\}$, the order of $s=-n$ as a pole is at most $1$, its residue is $a_{n}(\lambda,w)$. Thus, if $\lambda\not=1$ then the set of poles shrinks to 
\[
 \{ -(N-1),\ldots, 0\}.
\]
If $\lambda=1$ then $a_{-(m+1)} = 1\not=0$, and $s=m+1$ is indeed a pole. 

This argumentation obviously applies to any $N$, and hence we find that $\wt\varphi_{\lambda,w}$ has a meromorphic continuation to all of $\C$ with all poles contained in $\Z_{\leq  m+1}$, and each pole being simple. Since the Gamma function $\Gamma$ has simple poles at $-\N_0$, it follows that 
\[
 \Phi_m(s,\lambda,w) = \frac1{\Gamma(s)}\wt\varphi_{\lambda,w}(s)
\]
extends in $s$, for $\lambda\not=1$, to an entire function, and, for $\lambda=1$, to a meromorphic function on $\C$ with all poles contained in $\{1,\ldots, m+1\}$, all poles simple, and $m+1$ being a pole. This completes the proof.
\end{proof}

\bibliography{ap_bib}
\bibliographystyle{amsplain}

\end{document}